\theoremstyle{plain}
\newtheorem*{cor}{Corollary}
\newtheorem{theorem}{Theorem}[section]
\newtheorem{proposition}[theorem]{Proposition}
\newtheorem{lemma}[theorem]{Lemma}
\newtheorem{remark}[theorem]{Remark}
\newtheorem{corollary}[theorem]{Corollary}
\theoremstyle{definition}
\newtheorem{definition}[theorem]{Definition}
\newcommand{\nc}{\newcommand}
\nc{\dmo}{\DeclareMathOperator}
\nc{\QQ}{\mathbb{Q}}
\nc{\RR}{\mathbb{R}}
\nc{\NN}{\mathbb{N}}
\nc{\RP}{\mathbb{RP}^1}
\nc{\ZZ}{\mathbb{Z}}
\nc{\CC}{\mathbb{C}}
\nc{\cS}{\mathcal{S}}
\nc{\iso}{\cong}
\dmo{\Mod}{Mod}
\dmo{\Ig}{\mathcal{I}_g}
\dmo{\Span}{span}
\dmo{\Diff}{Diff}
\dmo{\Homeo}{Homeo}
\dmo{\dist}{dist}
\dmo\BDiff{BDiff}
\dmo\SO{SO}
\dmo\slide{sl}
\dmo\im{im}
\dmo\id{id}
\dmo\Fix{Fix}
\dmo\Stab{Stab}
\dmo\Mcg{Mcg}
\dmo\Out{Out}
\dmo\Aut{Aut}
\dmo{\Hg}{\mathcal{H}_g}
\dmo{\Han}{\mathcal{H}}
\dmo{\Tg}{\mathcal{T}_g}
\newcommand\CP{\mathcal{P}^{\mathrm{nm}}}
\newcommand\CPT{\mathcal{M}^{\mathrm{nm}}}
\dmo\W{W}
\dmo\A{A}
\renewcommand{\epsilon}{\varepsilon}
\nc{\coloneq}{\mathrel{\mathop:}\mkern-1.2mu=}
\nc{\margin}[1]{\marginpar{\scriptsize #1}}
\nc{\para}[1]{\bigskip\noindent\textbf{#1}}
\begin{document}
\title{The Geometry of the Handlebody Groups II: Dehn functions}
\author{Ursula Hamenst\"adt and Sebastian Hensel}
\date{27.4.2018\\Ursula Hamenstädt was supported by the ERC grant ``Moduli''.}
\begin{abstract}
  We show that the Dehn function of the handlebody group is
  exponential in any genus $g\geq 3$. On the other hand, we show that
  the handlebody group of genus $2$ is cubical, biautomatic, and therefore has
  a quadratic Dehn function.
\end{abstract}
\maketitle

\section{Introduction}
\label{sec:intro}

This article is concerned with the word geometry of the
\emph{handlebody group} $\Han_g$, i.e. the mapping class group of a
handlebody of genus $g$. The core motivation to study this group is
twofold.  On the one hand, handlebodies are basic building blocks for
three-manifolds -- namely, for any closed \mbox{$3$--manifold} $M$ there is a
$g$ so that $M$ can be obtained by gluing two genus $g$ handlebodies
$V, V'$ along their boundaries with a homeomorphism $\varphi$.  Any
topological property of $M$ is then determined by the gluing map
$\varphi$.  One of the difficulties in extracting this information is
that $\varphi$ is by no means unique. In fact, modifying it on either
side by a homeomorphism which extends to $V$ or $V'$ does not change
$M$. In this sense, the handlebody group encodes part of the
non-uniqueness of the description of a $3$--manifold via a Heegaard
splitting.

\smallskip The other motivation stems from geometric group theory, and
it is the more pertinent for the current work. Identify
the boundary surface of $V_g$ with a surface $\Sigma_g$ of genus $g$.
Then there is a restriction homomorphism of $\Han_g$ into the surface
mapping class group $\mathrm{Mcg}(\Sigma_g)$, and it is not hard
to see that it is injective. On the other hand,
considering the action of homeomorphisms of $V_g$ on the fundamental
group $\pi_1(V_g) = F_g$ gives rise to a surjection of $\Han_g$ onto
$\Out(F_g)$. The handlebody group is thus immediately related to two
of the most studied groups in geometric group theory.

But from a geometric perspective, neither of these relations is
simple: in previous work \cite{HH1} we showed that the inclusion of
$\Han_g$ into $\mathrm{Mcg}(\Sigma_g)$ is exponentially distorted for
any genus $g\geq 2$. Furthermore, a result by McCullough shows that
the kernel of the surjection $\Han_g \to \Out(F_g)$ is infinitely generated.

In particular, there is no a priori reason to expect that $\Han_g$ shares
geometric properties with either surface mapping class groups or outer
automorphism groups of free groups.

\smallskip
The first main result of this paper shows that the geometry of $\Han_g$ for
$g\geq 3$ seems to share geometric features with the (outer) automorphism group of
a free group. In this result, we slightly extend our perspective and also consider
handlebodies $V_{g,1}$ of genus $g$ with a marked point and their handlebody group $\Han_{g,1}$.
We show.
\begin{theorem}\label{thm:intro1}
  The Dehn function of $\Han_g$ and $\Han_{g,1}$ is exponential for
  any $g \geq 3$.
\end{theorem}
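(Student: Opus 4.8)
The plan is to prove matching upper and lower bounds, the lower bound being the substantial part.

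\textbf{Upper bound.} Both $\Han_g$ and $\Han_{g,1}$ are finitely presented, and by \cite{HH1} each is exponentially distorted in $\mathrm{Mcg}(\Sigma_g)$, which is automatic and hence has a quadratic Dehn function. A now-standard argument then yields an exponential upper bound for the Dehn function of $\Han_g$ (and of $\Han_{g,1}$): a word $w$ of length $n$ in $\Han_g$ has length at most $\exp(n)$ in $\mathrm{Mcg}(\Sigma_g)$, so it bounds a disc of area at most $\exp(2n)$ over a finite presentation of $\mathrm{Mcg}(\Sigma_g)$, and this disc can be transported back into $\Han_g$ with at most a further exponential loss, using that the quasigeodesic combing lines of $\mathrm{Mcg}(\Sigma_g)$ through the vertices of the disc have endpoints in $\Han_g$ together with the distortion estimate. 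One might also hope to obtain the upper bound directly from the action of $\Han_g$ on a complex of discs; I would first try the route above.

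\textbf{Lower bound.} Here I would exploit the surjection $\Han_g \to \Out(F_g)$ and, for the marked-point group, the action homomorphism $\Han_{g,1} \to \Aut(F_g)$ on $\pi_1(V_{g,1}) = F_g$, together with the theorem of Bridson--Vogtmann that $\Out(F_g)$ and $\Aut(F_g)$ have exponential Dehn functions for $g \geq 3$. (That genus $3$ is the threshold fits with $\Out(F_2) \cong \mathrm{GL}_2(\ZZ)$ having a linear Dehn function, and with the quadratic bound in genus $2$.) Fix an isoperimetric lower-bound family for $\Aut(F_g)$, i.e.\ null-homotopic words $u_n$ of length $O(n)$ with $\mathrm{Area}_{\Aut(F_g)}(u_n) \succeq \exp(n)$, chosen to lie in a subgroup $H_0$ generated by ``elementary'' automorphisms --- Nielsen transformations and automorphisms supported on a proper free factor. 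Each such automorphism is realized geometrically by an element of $\Han_{g,1}$ (a handle slide, a homeomorphism supported on a sub-handlebody, a point push), so $H_0$ lifts to a subgroup $\widetilde{H}_0 \leq \Han_{g,1}$. Assuming the lift $\widetilde{H}_0 \to H_0$ is an isomorphism, the lifted words $w_n \in \widetilde{H}_0$ are null-homotopic in $\Han_{g,1}$ and of length $O(n)$; and by a van Kampen diagram chase any filling of $w_n$ over a finite presentation of $\Han_{g,1}$ projects under $\Han_{g,1} \to \Aut(F_g)$ to a filling of $u_n$ whose area is larger by at most a bounded factor (each defining relator of $\Han_{g,1}$ has image bounding a disc of uniformly bounded area over $\Aut(F_g)$). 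Hence $\mathrm{Area}_{\Han_{g,1}}(w_n) \succeq \exp(n)$. The group $\Han_g$ is handled the same way using $\Han_g \to \Out(F_g)$, or is deduced from the marked-point case via the handlebody analogue of the Birman exact sequence $1 \to F_g \to \Han_{g,1} \to \Han_g \to 1$.

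\textbf{Main obstacle.} The delicate point throughout the lower bound is the control of the kernels of these surjections. The kernel of $\Han_g \to \Out(F_g)$ is generated by twists about meridian discs and, by the theorem of McCullough quoted in the introduction, is infinitely generated, so there is no finite presentation of the kernel to appeal to, and a priori the disc twists could be used to produce a subexponential ``shortcut'' filling of $w_n$. The diagram-projection argument is exactly what excludes this, but making it work requires (i) finite presentations of $\Han_{g,1}$ and $\Han_g$ explicit enough to bound the areas of the projected relators, and (ii) a verification that the chosen elementary automorphisms really do lift to a subgroup on which the surjection to $\Aut(F_g)$ (respectively $\Out(F_g)$) is injective --- equivalently, that no nontrivial word in the lifts is a point push (respectively a product of disc twists). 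If such an isomorphic lift cannot be arranged, one must instead correct the lifted relators by elements of the kernel and bound the length and filling cost of these corrections, and this is where I expect the bulk of the technical work to lie. A more robust alternative would be to build a filling invariant on loops in $\Han_{g,1}$ --- subadditive over the defining relators but exponentially large on the family $w_n$ --- by tracking the projected loop in a geometric action of $\Out(F_g)$, for instance on Culler--Vogtmann Outer space or on a free splitting complex.
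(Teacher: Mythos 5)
Your overall strategy for the lower bound is the paper's strategy: push a filling of the lifted word forward along $\Han_{g,1}\to\Aut(F_g)$, note that a homomorphism sending generators to bounded-length words multiplies area by at most a constant, and invoke Bridson--Vogtmann. Your diagram-projection step is fine as stated, and your worry that the infinitely generated kernel could create a ``shortcut'' is unfounded: for a \emph{fixed finite} presentation of $\Han_{g,1}$ each defining relator maps to a trivial word of bounded length in $\Aut(F_g)$, hence of bounded filling area, and that is all the push-forward needs. The genuine gap is the step you explicitly defer: producing words $\omega_n$ of length $O(n)$ that are \emph{actually trivial in $\Han_{g,1}$} and map to the Bridson--Vogtmann words. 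You propose to arrange an isomorphic section $\widetilde H_0\cong H_0$ over a subgroup of elementary automorphisms; this is both stronger than necessary and not something you verify, so as written the proof does not close. What the paper does instead is lift the single relation that matters: the words $w_n=T^nAT^{-n}BT^nA^{-1}T^{-n}B^{-1}$ are trivial in $\Aut(F_3)$ only because $B$ commutes with $T^nAT^{-n}$, and the paper builds $V_3$ as an interval bundle over a one-holed torus with one extra handle, realizes $T$ as a bundle map $\tau$ supported on the two torus ends, and realizes $A$ and $B$ as annulus twists (handle slides, plus a point push for $B$) whose supports are arranged so that $\tau^n\alpha\tau^{-n}$ and $\beta$ have disjoint supports for every $n$. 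The commutator relation then holds in $\Han_{3,1}$ on the nose, with no control of the kernel required. The passage to $g>3$ is also not free in your version: the paper extends the genus-$3$ construction by a connected sum and uses that $\Aut(F_3)\hookrightarrow\Aut(F_g)$ is a Lipschitz retract \cite{HM}, so the same words remain hard to fill.

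Two smaller points. First, your upper-bound argument does not work as sketched: a word of length $n$ in $\Han_g$ has length $O(n)$ (not $\exp(n)$) in $\Mcg(\Sigma_g)$, and the real difficulty is that the vertices of a quadratic-area filling in $\Mcg(\Sigma_g)$ need not lie anywhere near $\Han_g$; exponential distortion of a subgroup does not by itself bound its Dehn function. The paper simply quotes the exponential upper bound from \cite{Transactions}. Second, for $\Han_g$ (no marked point) the paper does not pass through the Birman sequence; it uses that the same words $w_n$ are exponentially hard to fill in $\Out(F_3)$ as well, by the same theorem of \cite{BV-Dehn}.
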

The Dehn function of a group is a combinatorial isoperimetric function,
and it is a geometric measure for the difficulty of the word problem
(see Section~\ref{sec:prelim-dehn} for details and a formal
definition). Theorem~\ref{thm:intro1} should be contrasted with the
situation in the surface mapping class group -- by a theorem of Mosher
\cite{Mosher-automatic}, these groups are automatic and therefore have
quadratic Dehn functions. On the other hand, Bridson and Vogtmann
showed that $\Out(F_g)$ has exponential Dehn function for $g\geq 3$
\cite{BV-Dehn, BV-Dehn2, HV-Dehn}.

Mapping class groups of small complexity are known to have
properties not shared with properties of mapping class groups
of higher complexity. For example, 
the mapping class group $\mathrm{Mcg}(\Sigma_2)$
of a surface of genus is a $\mathbb{Z}/2\mathbb{Z}$-extension of
the mapping class group of a sphere with 6 punctures.
This implies among others that the group 
virtually
surjects onto $\mathbb{Z}$, a property which is
not known for higher genus. 
On the other hand, the group ${\rm Out}(F_2)$ is just the
full linear group $GL(2,\mathbb{Z})$. Similarly, it is known that the
genus $2$ handlebody group surjects onto $\mathbb{Z}$ as well \cite{IS}.

Our second goal is to add to these results by showing that the
handlebody group $\Han_2$ has properties not shared by or unknown
for handlebody groups of higher genus. 

\begin{theorem}\label{thm:intro2}
  The group $\Han_2$ admits a proper cocompact action on a
  ${\rm CAT}(0)$ cube complex.
\end{theorem}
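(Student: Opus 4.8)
The plan is to first pin $\Han_2$ down concretely enough that a cubulation criterion can be applied, and then to build the cube complex by Sageev's ``walls to cube complexes'' machine from a well-chosen collection of walls coming from disks. The key initial ingredient is the hyperelliptic-type symmetry of $V_2$: there is an orientation-preserving homeomorphism $\iota$ of $V_2$ of order two whose restriction to $\partial V_2=\Sigma_2$ is the standard hyperelliptic involution, whose fixed set consists of three properly embedded arcs forming a trivial tangle, and for which $V_2/\iota$ is a $3$-ball. Since $\iota$ is central in $\Mcg(\Sigma_2)$ and $\Han_2$ embeds in $\Mcg(\Sigma_2)$, the class of $\iota$ is central in $\Han_2$, and I would exploit this to describe the group from two sides: $\Han_2/\langle\iota\rangle$ is the mapping class group of the pair $(B^3,T_3)$, i.e.\ the subgroup of $\Mcg(\Sigma_{0,6})$ consisting of classes that extend over the trivial three-string tangle; and $\Han_2$ is an extension of $\Out(F_2)=\mathrm{GL}(2,\ZZ)$ by the twist group generated by Dehn twists about meridian disks of $V_2$. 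Recording both identifies the two features the cube complex must accommodate: a virtually free quotient/subgroup $\mathrm{GL}(2,\ZZ)$, which prefers to act on a tree, and the free abelian subgroups generated by twists about families of pairwise disjoint disks, which require Euclidean flats. In particular the cube complex will be at least two-dimensional, with dimension controlled by the maximal number of pairwise disjoint, pairwise non-isotopic essential disks in $V_2$.

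For the construction I would take, as walls, those associated to isotopy classes of essential properly embedded disks of $V_2$: cutting $V_2$ along such a disk $D$ partitions, canonically and into two halves, the set of isotopy classes of complete disk systems of $V_2$, and so defines a wall $W_D$; the group $\Han_2$ permutes the collection of these walls. One then has to verify the standard package. First, there are only finitely many $\Han_2$-orbits of walls, since there are finitely many topological types of essential disk in $V_2$ together with its complementary handlebody. Second, the walls have finite width: there is a uniform bound, namely the bound $3=3\cdot 2-3$ on the number of pairwise disjoint, pairwise non-isotopic essential disks of $V_2$, on the number of pairwise crossing walls, which gives finite dimensionality of the dual complex. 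Third, the walls separate any two distinct vertices of the ambient space, and each wall stabilizer -- the stabilizer $\Stab(D)\le\Han_2$, which is an extension of the handlebody group of the handlebody obtained by cutting $V_2$ along $D$ by the infinite cyclic group $\langle T_D\rangle$ -- acts cocompactly on its wall; the last point is proved by induction on complexity, applied to the strictly simpler cut handlebody. Sageev's construction then outputs a finite-dimensional CAT(0) cube complex $X$ on which $\Han_2$ acts.

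Cocompactness of $\Han_2\curvearrowright X$ follows from the finiteness of the number of wall orbits together with the finite width bound. For properness one checks that a mapping class of $V_2$ fixing a vertex of $X$ must lie in a finite subgroup: it coarsely preserves a complete disk system of $V_2$ together with a co-orientation of each disk, so cutting reduces it to a homeomorphism of a ball with finitely many marked spots, and the mapping class group of such a ball is finite. The main obstacle is the combinatorics underlying the finite width bound and the separation property: one must understand precisely how disk systems of $V_2$ intersect and show that the walls $W_D$ assemble into a genuinely ``cubical'' pattern -- that all relevant configurations of disks are built from at most three pairwise disjoint disks, matching the faces of a cube. This is exactly the feature special to genus two. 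For $g\ge 3$ the disk intersection combinatorics is genuinely more complicated, and it is this richness that produces the exponential lower bound of Theorem~\ref{thm:intro1}; so the genus-two argument must exploit, in an essential way, the low complexity of the disk graph of $V_2$ (as analyzed in \cite{HH1}), and in particular the fact that it is, coarsely, tree-like once the infinite-cyclic twist directions are factored out.
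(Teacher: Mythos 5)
There is a genuine gap at the heart of your construction: the walls are never actually defined. You assert that an essential disk $D$ ``partitions, canonically and into two halves, the set of isotopy classes of complete disk systems,'' but no such dichotomy exists — a disk system can contain $D$, be disjoint from $D$, or intersect it, and none of these trichotomies is a two-sided partition. Without a concrete halfspace structure, none of the Sageev axioms (separation, finite interval condition, crossing bounds) can even be checked; in particular your finite-width claim conflates ``pairwise disjoint disks'' (at most $3$ of them, which is correct) with ``pairwise crossing walls,'' which is the condition actually needed for finite dimensionality and which depends entirely on the missing definition. More seriously, any wall structure indexed by isotopy classes of disks alone cannot give a proper action: the Dehn twist $T_\delta$ about a meridian $\delta$ preserves every disk disjoint from $\delta$, and it also preserves the data you invoke in your properness argument (a complete disk system together with co-orientations of its disks), so the vertex stabilizers in your dual complex would contain infinite cyclic twist subgroups. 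Your own sanity check flags the problem — the rank-$3$ twist subgroups ``require Euclidean flats'' — but your walls give those subgroups nothing to translate. The mapping class group of the cut-open ball with marked spots is also not finite in the sense you need, since sphere braiding of the spots survives.

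The paper's construction repairs exactly this defect, and it is worth seeing how. Vertices of its model $\CPT_2$ are not disk systems but pairs $(X,\Delta)$, where $X$ is a pants decomposition by non-separating meridians and $\Delta$ is a \emph{dual system} of meridians, each meeting its partner in $X$ twice and disjoint from the others; the union $X\cup\Delta$ cuts $\Sigma_2$ into disks, which is what forces vertex stabilizers to be finite (Proposition~\ref{prop:model}). Twist edges and twist cubes then realize each $\mathbb{Z}^3$ of twists as a genuine flat, and switch edges/cubes interpolate between adjacent pants decompositions. The CAT(0) property is not obtained from Sageev duality but from Gromov's link criterion plus simple connectivity, and the latter rests on the genuinely genus-$2$ combinatorial input you correctly anticipated must enter somewhere: the wave analysis of Section~\ref{sec:waves} showing that the non-separating meridional pants graph $\CP_2$ is a tree (Theorem~\ref{thm:CP-tree}). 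Your proposal gestures at this tree-likeness but does not supply it, and the steps you would need to make Sageev's machine run — the halfspaces, the crossing bound, and above all a mechanism by which twisting moves infinitely many walls — are precisely the content of the paper's dual systems and twist flats.
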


As an immediate corollary, using Corollary 8.1 of \cite{Swiatkowski}
and Proposition 1 of \cite{CMV}), we obtain among others
that the genus bound in Theorem \ref{thm:intro1} is optimal. 

\begin{cor}\label{cor}
  The group $\Han_2$ is biautomatic, in particular it has quadratic Dehn
  function, and it has the Haagerup property.
\end{cor}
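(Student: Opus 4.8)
The plan is to derive all three assertions formally from Theorem~\ref{thm:intro2} by quoting standard consequences of a proper cocompact action on a ${\rm CAT}(0)$ cube complex. Write $X$ for the ${\rm CAT}(0)$ cube complex on which $\Han_2$ acts properly and cocompactly. By construction $X$ is finite-dimensional, and cocompactness of the action makes it locally finite up to the group action, so the hypotheses of the results cited below will all be met; the one thing to check is that ``proper'' in Theorem~\ref{thm:intro2} means properness of the action with finite cell stabilizers, which is exactly what is needed.

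First I would establish biautomaticity. Groups admitting a proper cocompact action on a ${\rm CAT}(0)$ cube complex are biautomatic; this is Corollary 8.1 of \cite{Swiatkowski} (building on earlier work of Niblo and Reeves on cube complexes). Applied to the action of $\Han_2$ on $X$, this produces a biautomatic structure on $\Han_2$.

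Next, for the Dehn function: every automatic group, and in particular every biautomatic group, satisfies a quadratic isoperimetric inequality, so the Dehn function of $\Han_2$ is at most quadratic. Conversely $\Han_2$ is not word hyperbolic, since it contains a copy of $\ZZ^2$ generated, for instance, by the Dehn twists about the boundaries of two disjoint meridian disks; by Gromov's theorem a finitely presented non-hyperbolic group has at least quadratic Dehn function. Hence the Dehn function of $\Han_2$ is exactly quadratic, which in particular shows that the genus restriction $g\geq 3$ in Theorem~\ref{thm:intro1} is optimal.

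Finally, for the Haagerup property I would invoke Proposition 1 of \cite{CMV}: a group acting properly on a space with walls --- in particular, properly on a ${\rm CAT}(0)$ cube complex equipped with its hyperplane wall structure --- has the Haagerup property. The proper cocompact action of $\Han_2$ on $X$ yields such a proper action on the underlying wall space, so $\Han_2$ has the Haagerup property. There is no genuine obstacle in this corollary: all the substantive work lies in the construction of the cube complex $X$ in Theorem~\ref{thm:intro2}, and here one need only verify that the hypotheses of the quoted theorems match precisely what that construction delivers.
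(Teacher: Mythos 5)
Your proposal is correct and follows essentially the same route as the paper: biautomaticity from Corollary~8.1 of \cite{Swiatkowski} applied to the proper cocompact action on the cube complex, the quadratic upper bound on the Dehn function from biautomaticity together with the lower bound via a $\mathbb{Z}^2$ of Dehn twists about disjoint meridians, and the Haagerup property from Proposition~1 of \cite{CMV}. No gaps.
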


In order to prove Theorem~\ref{thm:intro1}, we recall from
previous work \cite{Transactions} that the Dehn
function of handlebody groups is at most exponential, and it therefore
suffices to exhibit a family of cycles which requires exponential area
to fill (compared to their lengths). These cycles will be lifted from
cycles in automorphism groups of free groups used by 
Bridson and Vogtmann \cite{BV-Dehn}. This
construction occupies Section~\ref{sec:exponential}.

The proof of Theorem~\ref{thm:intro2} is more involved, and relies on
constructing and studying a suitable geometric model for the genus $2$
handlebody group. In Section~\ref{sec:waves} we describe in detail the
intersection pattern of disk-bounding curves in a genus $2$ handlebody.
The model of $\Han_2$ is built in two steps: in
Section~\ref{sec:complexes} we construct a tree on which $\Han_2$ acts
and in Section~\ref{sec:consequences-2} we then use this tree to build
our cubical model for $\Han_2$ and prove Theorem~\ref{thm:intro2}.
We also discuss some additional geometric consequences.

\section{Preliminaries}
\label{sec:prelims}

\subsection{Handlebody Groups}
Let $V$ be a handlebody of genus $g\geq 2$. We identify the boundary of
$V$ once and for all with a surface $\Sigma$ of genus
$g$. Restrictions of homeomorphisms of $V$ to the boundary then
induce a restriction map
\[ r:\mathrm{Mcg}(V) \to \mathrm{Mcg}(\Sigma). \]
It is well-known that this map is injective, and we call its image the
\emph{handlebody group} $\Han_g$.

When we consider a handlebody with a marked point $p$, we will always assume
that the marked point is contained in the boundary. We then get a map
\[ r:\mathrm{Mcg}(V,p) \to \mathrm{Mcg}(\Sigma,p) \]
whose image is the handlebody group $\Han_{g,1}$.

\smallskip We also need some special elements of the handlebody
group. We denote by $T_\alpha \in \Mcg(\Sigma)$ the positive (or left) \emph{Dehn twist
  about $\alpha$} (compare \cite[Chapter~3]{Primer}). Dehn twists $T_\alpha$ are
elements of the handlebody group exactly if the curve $\alpha$ is a
\emph{meridian}, i.e. a curve which is the boundary of an embedded
disk $D \subset V$.

We have the following standard lemma which gives rise to another
important class of elements. 
\begin{lemma}\label{lem:annulus-twist}
  Suppose that $\alpha, \beta, \delta$ are three disjoint simple closed curves
  on $\Sigma$ which bound a pair of pants on $\Sigma$. Suppose that $\delta$
  is a meridian. Then the product
  \[ T_\alpha T_\beta^{-1} \]
  is an element of the handlebody group.
\end{lemma}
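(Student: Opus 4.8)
The plan is to realize $T_\alpha T_\beta^{-1}$ as the restriction to $\Sigma$ of a \emph{twist along a properly embedded annulus} in $V$, the three-dimensional analogue of a Dehn twist. Let $P\subset\Sigma$ be the embedded pair of pants with $\partial P=\alpha\cup\beta\cup\delta$, and, since $\delta$ is a meridian, let $D\subset V$ be an embedded disk with $\partial D=\delta$, chosen properly embedded so that $D\cap\Sigma=\delta$. As the interior of $D$ lies in the interior of $V$ while the interior of $P$ lies in $\Sigma$, we get $D\cap P=\delta$, so $A_0:=P\cup_\delta D$ is an embedded, connected, orientable surface in $V$ with the two boundary circles $\alpha,\beta\subset\Sigma$ and $\chi(A_0)=\chi(P)+\chi(D)=0$, hence an annulus. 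Using a collar $\Sigma\times[0,1)$ of $\partial V$ I would push the interior of $A_0$ into the interior of $V$ to obtain a properly embedded annulus $A\subset V$ with $\partial A=\alpha\cup\beta$, isotopic to $A_0$ rel boundary.

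Next I would build the twist $\tau_A$. Fix an identification $N(A)\cong S^1\times[0,1]\times[-1,1]$ of a tubular neighborhood of $A$ with $A=S^1\times[0,1]\times\{0\}$, $\alpha=S^1\times\{0\}\times\{0\}$ and $\beta=S^1\times\{1\}\times\{0\}$, chosen so that $N(A)\cap\Sigma=S^1\times\{0,1\}\times[-1,1]$ consists of annular neighborhoods of $\alpha$ and of $\beta$, while $S^1\times[0,1]\times\{\pm1\}$ lies in the interior of $V$. Define $\tau_A$ to be the identity outside $N(A)$ and
\[
  \tau_A(z,t,s)=\bigl(e^{\pi i(s+1)}z,\,t,\,s\bigr)\qquad\text{on }N(A).
\]
Since $e^{\pi i(s+1)}=1$ for $s=\pm1$, the map $\tau_A$ is the identity on $S^1\times[0,1]\times\{\pm1\}$, so it glues to a well-defined homeomorphism of $V$ and hence an element $\tau_A\in\mathrm{Mcg}(V)$; note there is no requirement that $\tau_A$ fix $\partial V$ pointwise.

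Finally I would identify $r(\tau_A)=\tau_A|_\Sigma$. On the annulus $S^1\times\{0\}\times[-1,1]$ around $\alpha$ the map $(z,s)\mapsto(e^{\pi i(s+1)}z,s)$ winds once around $S^1$ as $s$ runs from $-1$ to $1$ and so is a Dehn twist about $\alpha$; the analogous computation around $\beta$ gives a Dehn twist about $\beta$. The decisive point is the \emph{relative sign}: the same coordinate formula yields twists of \emph{opposite} handedness at the two ends, because the transverse coordinate $s$ of $N(A)$ induces opposite co-orientations on the two boundary circles of the connected annulus $A$ inside $\Sigma$ (equivalently, $\alpha$ and $\beta$ occur with opposite signs in $\partial A$); unwinding the collar orientations near $\alpha$ and near $\beta$ makes this precise. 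As $\alpha\cap\beta=\emptyset$, the twists $T_\alpha$ and $T_\beta$ commute, so $r(\tau_A)=(T_\alpha T_\beta^{-1})^{\pm1}$, and replacing $\tau_A$ by $\tau_A^{-1}$ if necessary gives $T_\alpha T_\beta^{-1}\in\Han_g$, as desired. The one genuinely delicate step is this sign bookkeeping; as a reassuring check, when $\alpha$ and $\beta$ are isotopic in $\Sigma$ one may take $A_0\subset\Sigma$, so $\tau_A$ is isotopic to the identity, consistent with $T_\alpha T_\beta^{-1}=1$ in that case.
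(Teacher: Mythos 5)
Your proof is correct and follows essentially the same route as the paper's: glue the pair of pants $P$ to the disk bounded by $\delta$, push off to get a properly embedded annulus $A$ with $\partial A=\alpha\cup\beta$, and realize $T_\alpha T_\beta^{-1}$ as the boundary restriction of a twist along $A$. Your explicit bookkeeping of why the two ends acquire twists of opposite handedness is a welcome refinement; the paper simply asserts that the annulus twist "restricts on the boundary of $V$ to the desired element."
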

\begin{proof}
  Let $P$ be the pair of pants with $\partial P = \alpha \cup \beta
  \cup \delta$, and let $D$ be the disk bounded by $\delta$. Then $P
  \cup D$ is an embedded annulus in $V$ whose two boundary curves are
  $\alpha$ and $\beta$. By applying a small isotopy we may then assume
  that there is a properly embedded annulus $A$ whose boundary curves
  are $\alpha, \beta$.

  Consider the homeomorphism $F$ of $V$
  which is a twist about $A$. To be more precise, consider a regular
  neighborhood $U$ of $A$ of the form
  \[ U = [0, 1] \times A = [0,1]\times S^1 \times[0,1].\] The
  homeomorphism $F$ is defined to be the standard Dehn twist on each
  annular slice $[0,1]\times S^1 \times \{t\} \subset U$. This map
  restricts to the identity on $\{0, 1\}\times A$, and thus extends to
  a homeomorphism of $V$. It restricts on the boundary of $V$ to the
  desired element, finishing the proof.
\end{proof}

The final type of elements we need are \emph{point-push maps}. Recall (e.g.
from \cite{Primer}) the \emph{Birman exact sequence}
\[ 1 \to \pi_1(\Sigma, p) \to \mathrm{Mcg}(\Sigma, p) \to
  \mathrm{Mcg}(\Sigma) \to 1.\]
The image of $\pi_1(\Sigma, p) \to \mathrm{Mcg}(\Sigma, p)$ is the point
pushing subgroup. We need three facts about these mapping classes, all of
which are well-known, and are fairly immediate from the definition.
\begin{lemma}\label{lem:point-push-facts}
\begin{enumerate}[i)]
\item The point-pushing subgroup is contained in $\Han_{g,1}$ 
  (compare \cite[Section~3]{HH1}).
\item If $\gamma\in\pi_1(\Sigma, p)$ is simple, then the point push
  about $\gamma$ is a product $T_\alpha T_\beta^{-1}$ of two Dehn
  twists, where $\alpha, \beta$ are the two simple closed curves
  obtained by pushing $\gamma$ off itself to the left and right,
  respectively (compare \cite[Fact~4.7]{Primer}).
\item The point push about $\gamma$ acts on $\pi_1(\Sigma,p)$ as conjugation
  by $\gamma$. Similarly, it acts on $\pi_1(V, p)$ as conjugation by
  the image of $\gamma$ in $\pi_1(V,p)$ (compare \cite[Discussion in Section~4.2.1]{Primer}).
\end{enumerate}
\end{lemma}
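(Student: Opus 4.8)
The plan is to handle the three items separately, in each case isolating the part that is a standard fact about point pushes on the closed surface and then checking that the relevant homeomorphism can be taken to live inside the handlebody. For (i) I would fix a collar $\Sigma\times[0,1]\subset V$ with $\Sigma\times\{0\}=\partial V$. By definition the point push along $\gamma$ is the time-one map of an isotopy $(F_t)$ of $\Sigma$ with $F_0=\id$ that drags a small disk about $p$ once around $\gamma$, so $t\mapsto F_t(p)$ traces $\gamma$. Performing this isotopy in the collar and damping it to the identity on $\Sigma\times\{1\}$ (e.g.\ $\tilde F_t(x,s)=(F_{t(1-s)}(x),s)$) produces an ambient isotopy of $V$ supported in the collar, fixing $p$ throughout, whose time-one map is a homeomorphism of $V$ restricting on $\partial V$ to the point push. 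Hence the push lies in $\Han_{g,1}$; alternatively this is recorded in \cite[Section~3]{HH1}.

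Item (ii) is entirely a statement inside $\Mcg(\Sigma,p)$, so there is nothing handlebody-specific to do: it is \cite[Fact~4.7]{Primer}. If one wants the argument, take the support of the push along the simple curve $\gamma$ to be a closed annular neighborhood $N\ni p$ of $\gamma$; inside $N$ the push is visibly the composition of the twist about one boundary component of $N$ with the inverse twist about the other, and these boundary components are exactly the left and right push-offs $\alpha,\beta$ of $\gamma$. The only point requiring care is which of $\alpha,\beta$ receives the positive twist, which one pins down by the standard change-of-coordinates computation of \cite[Chapters~3--4]{Primer}.

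For (iii), the action on $\pi_1(\Sigma,p)$ is the classical identification of the Birman kernel with the inner automorphisms, see \cite[Section~4.2.1]{Primer}. For the action on $\pi_1(V,p)$ I would reuse the homeomorphism $\varphi$ of $V$ built in (i): forgetting the marked point, $\varphi$ is isotopic to $\id_V$ through an isotopy whose trace at $p$ is the loop $\gamma\subset\partial V\subset V$. For any loop $c$ based at $p$ in $V$, sliding $\varphi(c)$ back along the reverse isotopy exhibits $\varphi_*[c]=\bar\gamma\,[c]\,\bar\gamma^{-1}$ in $\pi_1(V,p)$, where $\bar\gamma$ is the image of $\gamma$ under $\pi_1(\Sigma,p)\to\pi_1(V,p)$. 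This is word for word the surface computation and never uses that the ambient space is a surface rather than a handlebody; equivalently it follows from naturality of the point-push construction under $\Sigma\hookrightarrow V$.

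The content of the lemma is thus minimal — the one genuinely new observation over the closed-surface statements is that a boundary point push is supported in a collar and hence extends over $V$ — and I expect the only real friction to be bookkeeping: keeping the twist signs in (ii) and the direction of conjugation in (iii) consistent with the conventions of \cite{Primer} that are used throughout the paper.
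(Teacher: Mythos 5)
Your argument is correct and reproduces the standard proofs behind the references the paper cites; the paper itself gives no proof, recording all three items as well-known facts that are immediate from the definition. One cosmetic slip in (i): the collar isotopy $\tilde F_t$ does not fix $p$ throughout (it drags $p$ once around $\gamma$) — only its time-one map fixes $p$, which is all that is needed for membership in $\Han_{g,1}$.
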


\subsection{Dehn functions}
\label{sec:prelim-dehn}
Consider a finitely presented group $G$ with a fixed finite presentation
$\langle S | R \rangle$. A word $w$ in $S$ (or, alternatively, an element of
the free group $F(S)$ on the set $S$) is trivial in $G$ exactly if
$w$ can be written (in $F(S)$) as a product 
\[ w = \prod_{i=1}^n x_i r_i x^{-1}_i \]
for elements $r_i \in R$ and $x_i \in F(S)$. We define the \emph{area of $w$}
as the minimal $n$ for which such a description is possible. The
\emph{Dehn function} is the function
\[ D(n) = \sup\{ \mathrm{area}(w) \;|\; l(w) = n \} \]
where $l(w)$ denotes the length of the word $w$ (alternatively, the word norm
in $F(S)$).

The Dehn function depends on the choice of the presentation, but its
growth type does not (see e.g. \cite{Alonso}). We employ the convention that
products in mapping class groups are compositions (i.e. the rightmost
mapping classes are applied first).

\subsection{Annular subsurface projections}
\label{sec:annular-projections}
In this subsection we briefly recall subsurface projections into annular regions,
as defined in \cite{MM2}, Section~2.4.

Let $A = S^1 \times [0,1]$ be a closed annulus. Recall that the
\emph{arc graph} $\mathcal{A}(A)$ of the annulus $A$ is the graph
whose vertices correspond to embedded arcs which connect the two boundary
circles $S^1\times\{0\}, S^1\times\{1\}$, up to homotopy fixing the
endpoints. Two such vertices are joined by an edge if the
corresponding arcs are disjoint except possibly at the endpoints (up
to homotopy fixing the endpoints). It is shown in Section~2.4 of
\cite{MM2} that the resulting graph is quasi-isometric to the integers.

Now consider a surface $\Sigma$ of genus at least two. Fix once and for all a hyperbolic
metric on $\Sigma$. If $\alpha$ is any simple closed curve on
$\Sigma$, let $\Sigma_\alpha \to \Sigma$ be the annular cover
corresponding to $\alpha$, i.e. the cover homeomorphic to an (open)
annulus to which $\alpha$ lifts with degree $1$. By pulling back the
hyperbolic metric from $\Sigma$ to $\Sigma_\alpha$, we obtain a
hyperbolic metric on $\Sigma_\alpha$. This allows us to add two
boundary circles at infinity which compactify $\Sigma_\alpha$ to a
closed annulus $\widehat{\Sigma_\alpha}$.

If $\beta$ is a simple closed curve on $\Sigma$, then any lift
$\hat{\beta}$ of $\beta$ to $\Sigma_\alpha$ has well-defined endpoints
at infinity (for example, since this is true for lifts to the
universal cover). In addition, if $\beta$ has an essential intersection with $\alpha$, there is at least one lift
$\hat{\beta}$ of $\beta$ to $\Sigma_\alpha$ which connects the two
boundary circles of $\Sigma_\alpha$.  Such a lift $\hat{\beta}$ has
well-defined endpoints at infinity in $\widehat{\Sigma_\alpha}$, and
so it defines a vertex in $\mathcal{A}(\widehat{\Sigma_\alpha})$.  We
define the projection $\pi_\alpha(\beta) \subset
\mathcal{A}(\widehat{\Sigma_\alpha})$ to be the set of all such
lifts. Since $\beta$ is simple, this is a (finite) subset of diameter
one.

Observe that if $\beta'$ is freely
homotopic to $\beta$, then any lift of $\beta'$ is homotopic to a lift
of $\beta$ with the same endpoints at infinity. Hence, the projection
$\pi_\alpha(\beta)$ depends only on the free homotopy class of $\beta$.

If $\beta$ is disjoint from $\alpha$, the projection $\pi_\alpha(\beta)$ is undefined.

\smallskip If $\beta_1, \beta_2$ are two simple closed curves which both intersect
$\alpha$ essentially, then we define the \emph{subsurface distance}
\[ d_\alpha(\beta_1, \beta_2) = \mathrm{diam}(\pi_\alpha(\beta_1) \cup \pi_\alpha(\beta_2)). \]

\section{Exponential Dehn functions in genus at least $3$}
\label{sec:exponential}
\begin{theorem}\label{thm:han3-dehn}
  Let $g\geq 3$. The Dehn function of $\Han_g$ and $\Han_{g,1}$
  is at least exponential.
\end{theorem}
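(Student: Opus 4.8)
The plan is to transport the exponential lower bounds of Bridson and Vogtmann for $\Aut(F_g)$ and $\Out(F_g)$, $g\geq 3$, through the natural surjections $\Han_{g,1}\to\Aut(F_g)$ and $\Han_g\to\Out(F_g)$ induced by the action on $\pi_1(V_g)=F_g$ (in the first case the marked point on $\partial V_g$ is used as a basepoint; that this map is onto follows by correcting a lift of an element of $\Out(F_g)$ by a point-push, using Lemma~\ref{lem:point-push-facts}). Recall from \cite{BV-Dehn} (and \cite{BV-Dehn2,HV-Dehn} for the $\Out$ case) that for each $k$ there is a word $w_k$ in a fixed finite generating set of $\Aut(F_g)$, resp.\ $\Out(F_g)$, with $l(w_k)=O(k)$ which represents the identity but satisfies $\mathrm{area}(w_k)\geq c\mu^k$ for fixed $c>0$, $\mu>1$; moreover $w_k$ is constructed so as to be a consequence of only finitely many relations, essentially the commutations among a family of elementary transvections together with the relations recording the conjugation action of one exponentially growing automorphism on that family. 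For the present statement only the lower bound is needed, so we do not invoke the upper bound of \cite{Transactions}.

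First I would fix geometrically explicit preimages of the generators occurring in $w_k$. A Nielsen transvection $x_i\mapsto x_ix_j^{\pm1}$ of $F_g=\pi_1(V_g)$ is realized by a handle slide of $V_g$, which lies in the handlebody group and can be expressed through Dehn twists about meridians and annulus twists of the type produced in Lemma~\ref{lem:annulus-twist}; permutations and inversions of free factors are realized by obvious symmetries of $V_g$, and the exponentially growing automorphism by a fixed product of finitely many handle slides. In the marked-point case any inner automorphism occurring in $w_k$ is realized by a point-push map, which lies in $\Han_{g,1}$ by Lemma~\ref{lem:point-push-facts}(i). Replacing each letter of $w_k$ by its chosen preimage produces a word $\hat w_k$ with $l(\hat w_k)=O(k)$ mapping to $w_k$ under the surjection.

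The crux is to show that $\hat w_k$ still represents the identity in $\Han_g$, resp.\ $\Han_{g,1}$. A priori $\hat w_k$ merely lies in the kernel of the map to $\Out(F_g)$, and this kernel is infinitely generated by McCullough's theorem, so the relation has no formal reason to persist. However, since $w_k$ is a consequence of only the finitely many relations recalled above, it is enough to choose the preimages so that each of those relations already holds in $\Han_g$: the commuting transvections are lifted to annulus twists with disjoint supporting disks and annuli (so that the twists literally commute), and the single conjugation relation is obtained by conjugating the supporting disk and annulus systems directly by the chosen lift of the growing automorphism. Verifying these identities amounts to exhibiting explicit isotopies of $V_g$, and here the detailed picture of the intersection pattern of meridians is used. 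Granting this, $\hat w_k$ is a genuine null-homotopic word in $\Han_g$, resp.\ $\Han_{g,1}$.

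Finally I would transfer the area estimate. Fix a finite generating set $S$ of $\Han_g$ and a finite presentation of $\Out(F_g)$ on the generating set $\phi(S)$, where $\phi\colon\Han_g\to\Out(F_g)$ is the surjection; then $\phi$ sends words over $S$ to words over $\phi(S)$ without increasing length, and sends a van Kampen diagram over the chosen presentation of $\Han_g$ to one over the presentation of $\Out(F_g)$ after subdividing each $2$-cell by a filling of the $\phi$-image of the corresponding defining relator, so that the area grows by at most a multiplicative constant $C$. Applying this to an optimal diagram for $\hat w_k$ gives $\mathrm{area}_{\Han_g}(\hat w_k)\geq C^{-1}\,\mathrm{area}_{\Out(F_g)}(w_k)\geq C^{-1}c\mu^{k}$, while $l(\hat w_k)=O(k)$; writing $n=l(\hat w_k)$ this gives $D_{\Han_g}(n)\succeq\mu^{n}$, and likewise for $\Han_{g,1}$ through $\Aut(F_g)$. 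The main obstacle is the third paragraph: one must produce lifts of the Bridson--Vogtmann generators for which the finitely many relations witnessing $w_k=1$ survive inside the handlebody group, and not merely inside its free (outer) automorphism quotient; the rest is formal.
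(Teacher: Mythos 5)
Your overall strategy coincides with the paper's: lift the Bridson--Vogtmann words through $\Han_{g,1}\to\Aut(F_g)$ (resp.\ $\Han_g\to\Out(F_g)$) and use that homomorphisms coarsely decrease area, with the entire difficulty concentrated in showing that the lifted word is already trivial in the handlebody group rather than merely in the quotient. You have correctly located the crux, but your third paragraph --- which you yourself flag as ``the main obstacle'' --- is exactly where all of the paper's work lies, and there it is asserted rather than carried out. The issue is not just to lift each commuting pair to homeomorphisms with disjoint support: for the words $w_n=T^nAT^{-n}BT^nA^{-1}T^{-n}B^{-1}$ one needs realizations $\tau,\alpha,\beta$ such that the supports of $\tau^n\alpha\tau^{-n}$ and of $\beta$ are disjoint \emph{for every $n$ simultaneously}, even though the curves $\tau^n(a'),\tau^n(a'')$ carrying $\tau^n\alpha\tau^{-n}$ become arbitrarily complicated. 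The paper achieves this by building $V_3$ as an interval bundle $S\times[0,1]$ over a one-holed torus with a handle attached inside the annulus $\partial S\times[0,1]$: then $T$ is realized by a bundle map $\tau=t\times\mathrm{id}$ which preserves $S\times\{0\}$ and restricts to the identity on the annulus, $A$ by a handle slide supported near $S\times\{0\}$ together with a fixed arc of the annulus, and $B$ by a handle slide on the $S\times\{1\}$ side composed with a point-push (needed because the naive slide realizes $a\mapsto bab^{-1}$, $c\mapsto bc$ rather than $B$). Uniform disjointness then follows because $\tau$ moves neither $S\times\{0\}$ nor the relevant arc in the annulus. Without a construction of this kind your $\hat w_k$ is only known to lie in the kernel of the quotient map, which by McCullough is infinitely generated, so the claimed relation does not come for free; you should regard this as a genuine gap to be filled, not a routine verification.

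A secondary difference concerns $g>3$: you propose to quote exponential lower bounds for $\Out(F_g)$ directly from \cite{BV-Dehn2,HV-Dehn} and to realize the generators of those words anew, whereas the paper extends the genus-$3$ homeomorphisms by the identity over a connected sum with a genus-$(g-3)$ handlebody and invokes the fact that the image of $\iota:\Aut(F_3)\to\Aut(F_g)$ is a Lipschitz retract \cite{HM}, so that the same words $\iota(w_n)$ remain exponentially hard to fill. The latter route avoids redoing the geometric realization in every genus. Your final paragraph on transferring the area bound through the quotient homomorphism is correct and matches the paper.
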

The core ingredient in the proof is the natural map
\[ \Han_{g,1} \to \Aut(F_g) \] induced by the action of homeomorphisms
of the handlebody $V_g$ on the fundamental group $\pi_1(V_g)
= F_g$. Our strategy will be to take a sequence of trivial words $w_n$ in
$\Aut(F_g)$ which have exponentially growing area and lift them into
the handlebody group.

\smallskip
We will spend most of this section with discussing the case of $\Han_{3,1}$ in detail; the
other cases will be derived from this special case at the end of the section. 

In \cite{BV-Dehn} the
following three automorphisms of $F_3$ are considered. Let $a, b ,c$
be a free basis of $F_3$. Then define automorphisms
\[ A:\left\{
  \begin{array}{lll}
    a & \mapsto & a\\
    b & \mapsto & b\\
    c & \mapsto & ac
  \end{array}\right., \quad
  B:\left\{
  \begin{array}{lll}
    a & \mapsto & a\\
    b & \mapsto & b\\
    c & \mapsto & cb
  \end{array}\right., \quad
  T:\left\{
  \begin{array}{lll}
    a & \mapsto & a^2b \\
    b & \mapsto & ab\\
    c & \mapsto & c
  \end{array}\right.
\]
Observe that $B$ and $T^nAT^{-n}$ commute for all $n$, and therefore we have the following 
equation
\[ T^nAT^{-n}BT^nA^{-1}T^{-n}B^{-1}  = \id\]
in the automorphism group $\Aut(F_3)$.
The crucial
result we need is the following, which is proved in \cite[Theorem~A]{BV-Dehn}.
\begin{theorem}[Bridson-Vogtmann]\label{thm:outf3-dehn}
  Consider any presentation of $\Aut(F_3)$ or $\Out(F_3)$ whose generating set
  contains the automorphisms $A, B, T$. Then, if
  $f:\NN\to\NN$ is any sub-exponential function, the loops defined by the words
  \[ w_n = T^nAT^{-n}BT^nA^{-1}T^{-n}B^{-1} \]
  cannot be filled with area less than 
  $f(n)$ for large $n$.
\end{theorem}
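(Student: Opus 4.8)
The plan is to invoke \cite[Theorem~A]{BV-Dehn} directly; since I cannot improve on Bridson and Vogtmann's argument, what follows is a sketch of the mechanism I would reconstruct if forced to reprove it.

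First I would record the two elementary facts that make the statement plausible. Set $\psi_n = T^n A T^{-n}$, so that $w_n = [\psi_n,B] = \psi_n B \psi_n^{-1} B^{-1}$; thus $w_n$ is trivial exactly when $\psi_n$ commutes with $B$. From the given formulas, $A$ and $B$ commute: both fix $a$ and $b$, while $A$ sends $c \mapsto ac$ and $B$ sends $c \mapsto cb$, so $A$ multiplies $c$ on the left and $B$ on the right, and these operations commute. The same reasoning survives conjugation by $T^n$: one computes that $\psi_n$ fixes $a$ and $b$ and sends $c \mapsto T^n(a)\,c$, i.e.\ $\psi_n$ is again a left-multiplication on $c$, hence still commutes with the right-multiplication $B$. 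So $w_n = \id$ in $\Aut(F_3)$ (and in $\Out(F_3)$), while plainly $l(w_n) = 4n+4$ in any generating set containing $A,B,T$.

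The reason $w_n$ is expensive to fill is that $\psi_n$ is exponentially distorted. The automorphism $T$ preserves the free factor $\langle a,b\rangle$ and fixes $c$, and its restriction to $\langle a,b\rangle$ is a \emph{hyperbolic} automorphism of $F_2$: the abelianization is $\bigl(\begin{smallmatrix}2&1\\1&1\end{smallmatrix}\bigr)$, with Perron--Frobenius eigenvalue $\lambda = (3+\sqrt 5)/2 > 1$, so $|T^n(a)| \asymp \lambda^n$. Hence $\psi_n$ has word length $O(n)$ but acts on $F_3$ with ``complexity $\asymp \lambda^n$'', for instance $|\psi_n(c)| = |T^n(a)\,c| \asymp \lambda^n$. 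The lower-bound strategy is to run this complexity through a hypothetical van Kampen diagram $\Delta$ filling $w_n$: label each vertex of $\Delta$ by the element of $\Aut(F_3)$ obtained as the product along an edge-path from the basepoint (well defined since $\Delta$ is a disc). Following $\partial\Delta$, which spells $w_n$, the complexity is $O(1)$ at the basepoint (as $T$ fixes $c$) but has climbed to $\asymp \lambda^n$ at the intermediate vertex where the running product has accumulated a full power $T^n$ followed by one transvection --- e.g.\ the partial product $T^n A$, which sends $c \mapsto T^n(a)\,c$. If one can equip $\Aut(F_3)$ with a ``complexity'' function that (i) changes by at most a uniform additive constant across each defining $2$--cell and (ii) still differs by $\asymp \lambda^n$ between the basepoint and that spike vertex, then every edge-path in $\Delta$ from the basepoint to the spike must have $\gtrsim \lambda^n$ edges, and since each defining relator has bounded length this forces $\mathrm{area}(w_n) \gtrsim \lambda^n$. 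As $\lambda^n$ eventually exceeds any subexponential $f$, this is the theorem; passing among the finitely many presentations and generating sets containing $A,B,T$ changes areas only by the usual multiplicative and additive constants (compare \cite{Alonso}), which does not affect the conclusion, and the case of $\Out(F_3)$ is the same with conjugacy-length in place of length.

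The hard part --- and the reason I would cite \cite{BV-Dehn} rather than grind this out --- is constructing the complexity function just described. The naive candidate $g \mapsto |g(c)|_{F_3}$ satisfies (ii) but badly fails (i): a single transvection generator can multiply $|g(c)|$ by a factor of $|g(a)|$, so the function is not coarsely Lipschitz along relator boundaries, and one must separately exclude diagrams that manufacture large-complexity elements by routes other than high powers of $T$. Bridson and Vogtmann resolve this with a more robust invariant, built from the way $g$ distorts lengths in $F_3$ together with the normal-form structure that controls where large lengths can appear, plus careful bookkeeping of the finitely many auxiliary generators; this is exactly the content of \cite[Theorem~A]{BV-Dehn}, which I would import verbatim. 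It is also where rank $\ge 3$ is needed --- two generators to carry the hyperbolic automorphism and a third ``spectator'' generator $c$ on which to hang the distorted transvection --- matching the hypothesis $g\ge 3$ in Theorem~\ref{thm:han3-dehn}.
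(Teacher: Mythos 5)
Your proposal is correct and takes the same approach as the paper, which gives no proof of its own but simply cites \cite[Theorem~A]{BV-Dehn}. The additional sketch you include of the Bridson--Vogtmann mechanism (the commutation $[A,B]=\id$ surviving conjugation, exponential distortion of $\psi_n$ via the Perron--Frobenius eigenvalue of $T\vert_{\langle a,b\rangle}$, and the need for a robust complexity function on van Kampen diagrams) is accurate and correctly flags where the real work lies.
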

In particular, since the words $w_n$ have length growing linearly in
$n$, the theorem immediately implies that $\Aut(F_3)$ and $\Out(F_3)$
have exponential Dehn function.

In order to show Theorem~\ref{thm:han3-dehn}, we will realize $A, B,
T$ in a specific way as homeomorphisms of a genus $3$ handlebody. This
construction will be performed in several steps. 

\subsection*{Constructing the handlebody}

The first step is to give a specific construction of a genus $3$ handlebody $V_3$
that will be particularly useful to us. We construct $V_3$ by attaching a single
handle $H$ to an interval-bundle $V_2$ over a torus $S$ with one boundary component
(which is a genus $2$ handlebody).

\medskip To be more precise, denote by $S$ a surface of genus $1$ with
one boundary component.  We pick a basepoint $p \in \partial S$.  We
define
\[V_2 = S \times [0,1].\] This is a handlebody of genus
$2$, and its boundary $\partial V_2$ has the form
\[\partial V_2 = (S \times \{0\}) \cup (\partial S \times [0,1]) \cup 
  (S \times \{1\}). \] 
In other words, the boundary consists of two tori $S_i = S \times \{i\}, i=0,1$ and
an annulus $A= \partial S \times [0,1]$. We employ the convention that a subscript
$0$ or $1$ attached to any object in $S$ denotes its image in $S\times\{0, 1\}$.
For example, $p_0$ will denote the point $p\times\{0\}$. 

Next, we want to attach a handle in $A$ to form the genus $3$
handlebody. To this end, choose two disjoint embedded disks $D^-, D^+$
in the interior of $A$ which are disjoint from $p\times[0,1]$. Gluing
$D^-$ to $D^+$ (or, alternatively, attaching a $1$-handle at these
disks) yields our genus $3$ handlebody $V_3$. We denote by $D$ the image of the
disks $D^+, D^-$ in $V_3$.

\smallskip
Finally we will construct a core graph in $V_3$ in a way that is
compatible with our construction. Begin by choosing
two loops $a, b \subset S$ which intersect only in $p$, and which
define a free basis of $\pi_1(S, p) = F_2$. Furthermore, choose points
$q^-, q^+$ in $\partial D^-, \partial D^+$ which are identified with
each other in forming $V_3$. Then choose embedded arcs $c^+, c^-
\subset A$ from $p_1$ to $q^+, q^-$ which only intersect in $p_1$. We
denote by $c$ the loop in $V_3$ formed by traversing $c^+$ from $p_1$ to
$q^+$, then $c^-$ from $q^-$ back to $p_1$.

Then the union
\[ \Gamma = a_1 \cup b_1 \cup c \] is an embedded three-petal rose in
$\partial V_3$, so that the inclusion $\Gamma\to V_3$ induces an isomorphism on
fundamental groups (recall that $a_1 = a \times \{1\}$ and similar for
$b_1$). By slight abuse of notation, we will denote the images of the
three petals in $\pi_1(V_3)$ by $a,b,c$, and note that they form a free basis.
\begin{figure}
  \centering
  \includegraphics[width=\textwidth]{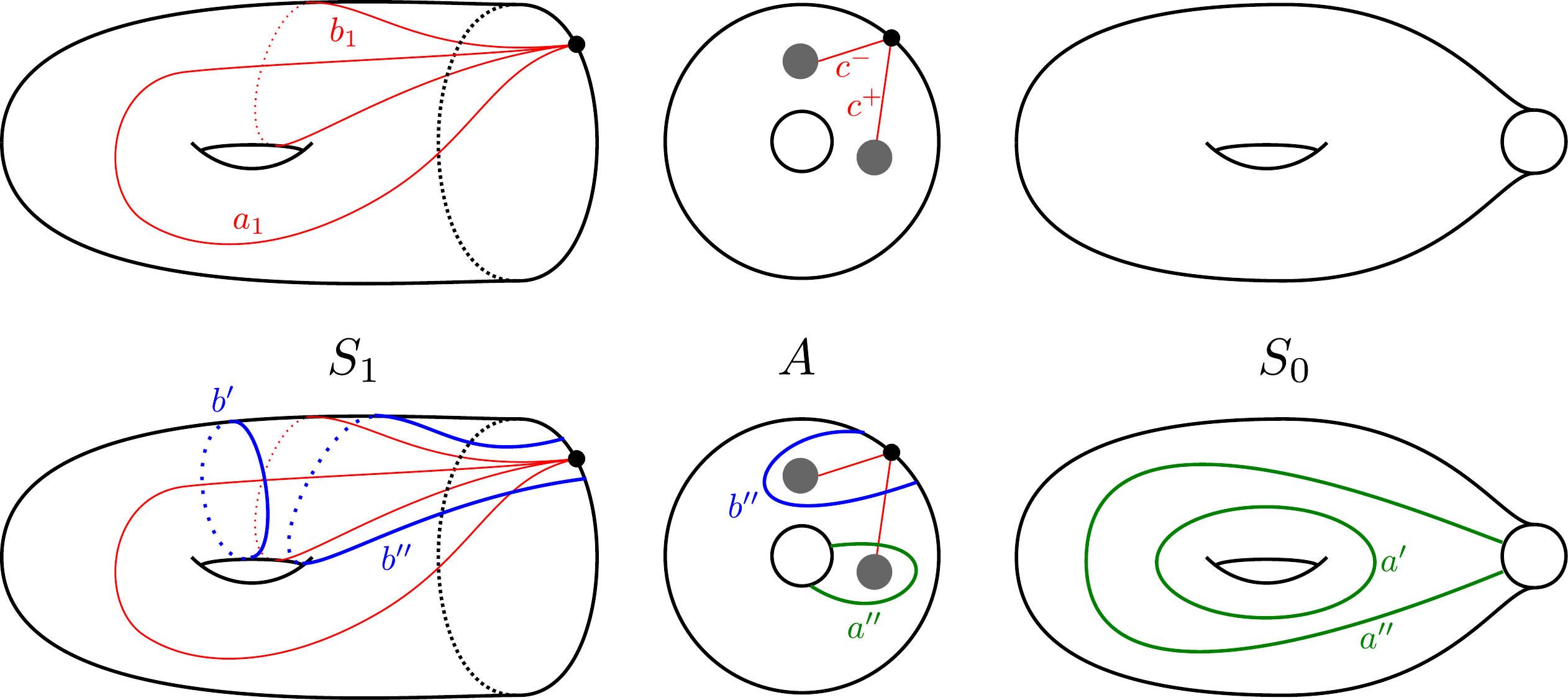}
  \caption{Constructing the handlebody and curves needed to construct the maps $\alpha, \beta, \tau$.}
  \label{fig:confusing}
\end{figure}

\subsection*{Realizing $T$ as a bundle map}
Recall that the mapping class group of a torus $S$ with one boundary
component surjects to $\Aut(F_2)$ \cite[Section~2.2.4 and
Proposition~3.19]{Primer}, and therefore there is a homeomorphism $t$
of $S$ which restricts to the identity on $\partial S$, and
so that the induced map $t:\pi_1(S,p)\to\pi_1(S,p)$ acts on the basis 
defined by the loops $a,b$ as follows:
\[ t_*(a) = a^2b, \quad t_*(b) = ab. \] The homeomorphism
$t\times\mathrm{Id}$ of $V_2$ preserves $S_0, S_1$ setwise and
restricts to the identity on $A$. By the latter fact, the
homeomorphism $t\times\mathrm{Id}$ then defines a homeomorphism $\tau$
of $V_3$, which is the identity on $A$, and in particular fixed $c$ pointwise.

We summarize the important properties of $\tau$ in the following lemma.
\begin{lemma}
  $\tau$ is a homeomorphism of $V_3$ fixing the marked point $p_1$ with
  the following properties:
  \begin{enumerate}[i)]
  \item The support of $\tau$ restricted to the boundary $\partial V_3$
    is $S_0 \cup S_1$, and it preserves both subsurfaces $S_i$ set-wise.
  \item $\tau$ acts on $\pi_1(V_3, p_1)$ in the basis $a,b,c$ as the automorphism $T$:
    \[ \tau_*(a) = a^2b, \quad \tau_*(b) = ab, \quad
    \tau_*(c) = c. \]
  \end{enumerate}
\end{lemma}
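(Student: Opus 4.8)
The plan is to verify each of the two claims directly from the explicit construction of $\tau$ given just above the lemma statement, since essentially all the work has already been arranged into the setup.

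For part (i), recall that $\tau$ is by definition the homeomorphism of $V_3$ induced by $t\times\mathrm{Id}$ on $V_2 = S\times[0,1]$. First I would note that $t\times\mathrm{Id}$ is supported on $S\times[0,1]$ but restricts to the identity on $\partial S\times[0,1]$ (because $t$ is the identity on $\partial S$); in particular it is the identity on the annulus $A = \partial S\times[0,1]$, hence also on the attaching disks $D^\pm\subset A$, which is exactly why it descends to a homeomorphism of $V_3$ after gluing $D^-$ to $D^+$. On the boundary $\partial V_3$, the complement of $S_0\cup S_1$ is the once-punctured annulus obtained from $A$ by the handle attachment, and $\tau$ is the identity there; so the support of $\tau|_{\partial V_3}$ is contained in $S_0\cup S_1$. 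Since $t\times\mathrm{Id}$ preserves each slice $S\times\{i\}$ setwise, $\tau$ preserves $S_0$ and $S_1$ setwise. Finally, $p = p\times\{1\}\in\partial S\times\{1\}\subset A$, so $\tau$ fixes $p_1$; this is immediate from $\tau|_A = \mathrm{Id}$.

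For part (ii), I would use the identification $\Gamma = a_1\cup b_1\cup c\hookrightarrow V_3$ inducing an isomorphism $\pi_1(\Gamma,p_1)\cong\pi_1(V_3,p_1)$ with free basis $a,b,c$. The loop $c$ lies entirely in $A$ (it is built from the arcs $c^\pm\subset A$), and $\tau$ is the identity on $A$, so $\tau_*(c) = c$. For $a$ and $b$: the loops $a_1, b_1$ lie in $S_1 = S\times\{1\}$, and on this torus $\tau$ acts as $t\times\{1\} \cong t$, whose action on $\pi_1(S,p)$ in the basis $a,b$ was chosen to be $t_*(a) = a^2b$, $t_*(b) = ab$. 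I would then observe that the inclusion $S_1\hookrightarrow V_3$ is compatible with $\Gamma\hookrightarrow V_3$ on the $a,b$ petals, so the action of $\tau_*$ on the classes $a,b\in\pi_1(V_3,p_1)$ agrees with $t_*$ on $a,b\in\pi_1(S,p)$ under the inclusion-induced map. (One subtlety: the image of $b$ in $\pi_1(V_3)$ under the inclusion $S_1\to V_3$ might a priori differ from the basis element $b$; but since $\Gamma$ realizes the free basis and $a_1,b_1$ are literally the $a,b$ petals of $\Gamma$, these coincide.) Combining, $\tau_*$ acts as the automorphism $T$ in the basis $a,b,c$.

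I do not expect any serious obstacle here; the lemma is a bookkeeping statement whose content was front-loaded into the construction. The one place to be slightly careful is the compatibility of fundamental-group identifications in part (ii) — specifically checking that the inclusion $S_1\hookrightarrow V_3$ sends the $\pi_1(S_1)$-basis $\{a_1,b_1\}$ to the first two elements of the chosen basis $\{a,b,c\}$ of $\pi_1(V_3)$, rather than to some conjugates or other basis. This follows because $a_1,b_1$ are by definition two of the three petals of the rose $\Gamma$, and $\Gamma\to V_3$ was set up to induce the basis isomorphism; but it is worth stating explicitly so that the displayed formulas $\tau_*(a) = a^2b$, $\tau_*(b) = ab$ are unambiguous.
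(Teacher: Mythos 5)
Your proof is correct and follows exactly the route the paper intends: the paper gives no separate proof of this lemma, treating it as a summary of the construction of $\tau$ from $t\times\mathrm{Id}$, and your verification (identity on $A$ hence on $c$ and on $p_1$; action as $t_*$ on the petals $a_1,b_1\subset S_1$) is precisely that argument made explicit. The only quibble is cosmetic: the complement of $S_0\cup S_1$ in $\partial V_3$ is $A$ with the two disks removed together with the handle's lateral annulus (a genus-one surface with two boundary circles), not a once-punctured annulus, but this does not affect the argument since $\tau$ is the identity on all of it.
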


\subsection*{Realizing $A$ by a handleslide}
Intuitively, $\alpha$ will slide the end $D^+$ of the handle $H$ around 
the loop $a_0$ in the ``bottom surface'' $S_0$ of the
interval bundle $V_2 \subset V_3$.  

To be precise, let $z$ be an
arc which joins $D^+$ to $p\times\{0\}$ inside $A$, and is disjoint
from $c$. Consider a small regular neighbourhood of $\partial D^+ \cup z \cup a_0$ in
$\partial V_3$. Its boundary consists of three simple closed curves, one of which
is homotopic to $\partial D$, and the two others we denote by $a',
a''$. One of them, say $a'$, is contained in $S_0$ and will intersect
$a_0 \cup b_0$ in a single point (necessarily of $b_0$).  The other
curve $a''$ is disjoint from $S_1$, and intersects $A$ in a single
arc. Note further that $a', a''$ and $\partial D$ bound a pair of
pants in $\partial V_3$. 

Let $\alpha$ be a homeomorphism of $\partial V_3$ which defines the
product $T_{a''}T_{a'}^{-1}$ of Dehn twists in the mapping class group
of $\partial V_3$ and is supported in a small regular neighbourhood of
$a' \cup a''$.  It extends to a homemorphism of the handlebody $V_3$
by Lemma~\ref{lem:annulus-twist}, and we will denote this extension
by the same symbol.

Next, we compute the action of $\alpha$ on the fundamental group of $V_3$.
We will do this using the core graph
\[ \Gamma = a_1 \cup b_1 \cup c \] defined above. Since $a', a''$ are
disjoint from $S_1$, we have that $\alpha(a_1) = a_1, \alpha(b_1)=b_1$
for $i=1,2$. Since $a' \subset S_0$, we see that $c$ is disjoint from
$a'$. Finally, $c$ intersects $a''$ in a single point $q$. Thus,
$\alpha(c)$ is a loop, which is formed by following $c$ until the
intersection point $q$, traversing $a''$ once, and then continuing
along $c$. Observe that by pushing $a''$ first through $D$, and then
to the ``top'' half $S_1$ of the interval bundle, this loop
$\alpha(c)$ is therefore homotopic in $V_3$ (relative to $p_1$) to the
concatenation of $a_1$ and $c$.  We thus have the following properties
of $\alpha$:
\begin{lemma}\label{lem:alpha}
  $\alpha$ is a homeomorphism of $V$ fixing $p_1$ with the following properties:
  \begin{enumerate}[i)]
  \item The restriction of $\alpha$ to $\partial V$ is supported in a
    small neighbourhood of $a', a''$, where $a' \subset S_0$, and
    $a''$ is disjoint from $S_1$ and intersects $A$ in a single arc.
  \item $\alpha$ acts on $\pi_1(V_3, p_1)$ as the automorphism $A$:
    \[ \alpha_*(a) = a, \quad \alpha_*(b) = b, \quad
      \alpha_*(c) = ac. \]    
  \end{enumerate}
\end{lemma}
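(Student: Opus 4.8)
The plan is to verify the two asserted properties separately, the first being essentially a bookkeeping statement about the supports of the curves involved, the second a computation of the induced map on $\pi_1(V_3,p_1)$ using the core graph $\Gamma = a_1 \cup b_1 \cup c$.

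For property i), I would first justify that the construction of $\alpha$ makes sense: the regular neighbourhood of $\partial D^+ \cup z \cup a_0$ in $\partial V_3$ is a pair of pants (it deformation retracts to a wedge of two circles meeting the core), so its boundary consists of three simple closed curves. One of these is isotopic to $\partial D$ (the one "surrounding" $\partial D^+$), and the other two are $a', a''$. I would check that $a'$ is isotopic into $S_0$: it is the boundary component that does not run over the attaching region of the handle, hence stays in $S \times \{0\}$, and a quick intersection count shows $a'$ meets $a_0 \cup b_0$ in one point. The curve $a''$ must run over the arc $z$ and hence intersects the annulus $A = \partial S \times [0,1]$ in a single arc while being disjoint from $S_1$. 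Since $a', a'', \partial D$ bound a pair of pants and $\partial D$ is a meridian, Lemma~\ref{lem:annulus-twist} applies and shows $T_{a''} T_{a'}^{-1}$ extends over $V_3$; choosing a representative $\alpha$ supported in a small neighbourhood of $a' \cup a''$ then gives the support statement, and this neighbourhood can be taken disjoint from $p_1$ since $a_0$, $z$ may be chosen to avoid $p \times [0,1]$ (hence $p_1$).

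For property ii), I would compute $\alpha_*$ on the three petals of $\Gamma$. Since $a'$ and $a''$ are both disjoint from $S_1$, the loops $a_1 = a \times \{1\}$ and $b_1 = b \times \{1\}$ can be isotoped off the support of $\alpha$, so $\alpha_*(a) = a$ and $\alpha_*(b) = b$. For $c$: the loop $c$ lies in $A$ and is disjoint from $a' \subset S_0$, while it crosses $a''$ exactly once (at the point $q$ where $c$ passes through the arc in which $a''$ meets $A$). Applying the twist $T_{a''}$ to $c$ inserts one copy of $a''$ at $q$; applying $T_{a'}^{-1}$ does nothing since $c \cap a' = \emptyset$. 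Thus $\alpha(c)$ is the loop obtained by running along $c$ to $q$, going once around $a''$, then continuing along $c$. It remains to identify the free homotopy class (rel $p_1$) of $a''$ inside $V_3$: pushing $a''$ across the disk $D$ and then into $S_1$ shows $a''$ is homotopic in $V_3$ to $a_1$, so $\alpha(c)$ is homotopic rel $p_1$ to the concatenation $a_1 \cdot c$, i.e. $\alpha_*(c) = ac$. This is exactly the automorphism $A$.

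The main obstacle I anticipate is bookkeeping in property ii): getting the orientation conventions consistent so that $\alpha_*(c) = ac$ rather than $ca$, $a^{-1}c$, or $ca^{-1}$ — i.e. correctly tracking on which side the twist $T_{a''}$ inserts the curve $a''$, the direction in which $a''$ is traversed, and the identification of $a''$ with $a_1$ (not $a_1^{-1}$) after pushing through $D$. This requires careful attention to the figure (Figure~\ref{fig:confusing}) and to the convention that products of mapping classes are compositions with rightmost applied first; none of it is deep, but all of it must line up. A secondary, milder point is confirming that the neighbourhood of $\partial D^+ \cup z \cup a_0$ really is a pair of pants and that $a'$, as opposed to $a''$, is the component lying in $S_0$ — this is visually clear but should be stated carefully.
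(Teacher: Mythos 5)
Your proposal is correct and follows essentially the same route as the paper: construct $a', a''$ as two of the three boundary curves of a regular neighbourhood of $\partial D^+\cup z\cup a_0$, invoke Lemma~\ref{lem:annulus-twist} via the pair of pants they bound with $\partial D$, and compute the action on the core graph $\Gamma$ by counting intersections of $a',a''$ with the petals and then pushing $a''$ through $D$ into $S_1$ to identify it with $a_1$. The orientation bookkeeping you flag is likewise left to the figure in the paper's own proof.
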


\subsection*{Realizing $B$ by a handleslide and a point push}
We will realise $B$ similar to $A$, by pushing $D^-$ along the loop $b$ of the
``top'' side of the interval bundle. 

To do this, we first construct an auxiliary homeomorphism $\hat{\beta}$ of
$V_3$ analogous to the previous step. Consider a regular neighbourhood of $\partial D \cup
c^- \cup b_1$. Its boundary again consists of three curves; one of which
is homotopic to $\partial D$, and we denote the others by $b',
b''$. Let $b'$ be the one which is completely contained in $S_1$ (and
thus freely homotopic to $b_1$).

As above, we can choose a homeomorphism $\hat{\beta}$ which is supported in a small
neighbourhood of $b'\cup b''$ and defines the mapping class $T_{b''}T_{b'}^{-1}$.
By Lemma~\ref{lem:annulus-twist} and the fact that $b', b''$ and $\partial D$ bound
a pair of pants, it extends to $V_3$ and we denote the extension by the same symbol.

We now compute the effect of $\hat{\beta}$ on the core graph $\Gamma$. We
begin with the petal $a_1$. It intersects both $b'$ and
$b''$ in one point each. Hence, we have that $\hat{\beta}(a_1)$ is
homotopic on $\partial V_3$, relative to the basepoint $p_1$, to the
concatenation $b_1 * a_1 * b_1^{-1}$. The loop
$b_1$ is, by construction, disjoint from $b', b''$ and so we have
$\hat{\beta}(b_1) = b_1$.  Finally, $c^+$ intersects $b''$
in a single point, and is disjoint from $b'$, while $c^-$ is
disjoint from both $b',b''$. Thus, $\hat{\beta}(c)$ is homotopic on
$\partial V_3$ to the concatenation $b_1 * c$. In total, we
see that $\hat{\beta}$ acts on our chosen basis of $\pi_1(V_3,p_1)$ as follows:
\[ \hat{\beta}_*(a) = bab^{-1}, \quad \hat{\beta}_*(b) = b, \quad
  \hat{\beta}_*(c) = bc. \] To define the homeomorphism $\beta$, we
post-compose $\hat{\beta}$ with a point push $P$ around $b_1^{-1}$ (which
is an element of the handlebody group by
Lemma~\ref{lem:point-push-facts}~i)). Since this point push has the
effect on the level of fundamental group of conjugating by $b^{-1}$
(Lemma~\ref{lem:point-push-facts}~iii)), we see that therefore $\beta$
will indeed realize the automorphism $B$ as desired.

By  Lemma~\ref{lem:point-push-facts}~ii), 
the point push homeomorphism $P$ can be chosen to be
supported in the union of $S_1$ and a small neighborhood of
$p_1$. In particular, we may
assume that the support of the point push is disjoint from the arc
$a''\cap A$ occurring in Lemma~\ref{lem:alpha}.  We summarize the required
properties of $\beta$ in the following lemma.
\begin{lemma}\label{lem:beta}
  $\beta$ is a homeomorphism of $V$ fixing the marked point $p$ with
  the following properties:
  \begin{enumerate}    
  \item The restriction of $\beta$ to $\partial V$ is the product of
    four Dehn twists about curves $d_i \subset S_0 \cup A$, all four
    of which are disjoint from the curves $a', a''$ occurring in Lemma~\ref{lem:alpha}.
  \item $\beta$ acts on $\pi_1(V_3, p_1)$ as the automorphism $B$:
    \[ \beta_*(a) = a, \quad \beta_*(b) = b, \quad
      \beta_*(c) = cb. \]
  \end{enumerate}  
\end{lemma}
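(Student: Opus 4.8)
The homeomorphism $\beta$ has in effect already been constructed above: it is the composition $\beta = P\circ\hat{\beta}$, where $\hat{\beta}$ is the map realizing $T_{b''}T_{b'}^{-1}$ on $\partial V_3$, which extends over the handlebody by Lemma~\ref{lem:annulus-twist} (using that $b',b'',\partial D$ bound a pair of pants and that $\partial D$ is a meridian), and $P$ is the point push about $b_1^{-1}$, which lies in the handlebody group by Lemma~\ref{lem:point-push-facts}~i). So the task is only to verify the two stated properties, and all of the geometric input is already on the table; one also takes $\hat{\beta}$ and $P$ to fix $p_1$, which is possible since their supports can be arranged to miss $p_1$.

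For the action on $\pi_1(V_3,p_1)$, the plan is to read off the action of $\hat{\beta}$ on the core rose $\Gamma = a_1\cup b_1\cup c$ from intersection numbers, as in the discussion preceding the statement: $a_1$ meets each of $b',b''$ once, so $\hat{\beta}(a_1)$ is homotopic rel $p_1$ to $b_1 * a_1 * b_1^{-1}$; $b_1$ is disjoint from $b',b''$, so $\hat{\beta}(b_1)=b_1$; and $c$ meets $b''$ once along $c^+$ and is disjoint from $b'$, so $\hat{\beta}(c)$ is homotopic to $b_1 * c$. In the free basis this reads $\hat{\beta}_*(a)=bab^{-1}$, $\hat{\beta}_*(b)=b$, $\hat{\beta}_*(c)=bc$. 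By Lemma~\ref{lem:point-push-facts}~iii), $P_*$ is conjugation by the image of $b_1^{-1}$, that is, $x\mapsto b^{-1}xb$, and since products are compositions with the rightmost factor applied first, $\beta_* = P_*\circ\hat{\beta}_*$. One then computes $\beta_*(a)=b^{-1}(bab^{-1})b=a$, $\beta_*(b)=b^{-1}bb=b$ and $\beta_*(c)=b^{-1}(bc)b=cb$, which is exactly the automorphism $B$.

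For the description of $\beta|_{\partial V_3}$, the plan is to collect the Dehn twists and locate their curves. The factor $\hat{\beta}$ contributes $T_{b''}$ and $T_{b'}^{-1}$, and by Lemma~\ref{lem:point-push-facts}~ii) the point push $P$ about the simple loop $b_1^{-1}$ contributes two more twists, about the two curves obtained by pushing $b_1$ off itself to the two sides; hence $\beta|_{\partial V_3}$ is a product of four Dehn twists. Their defining curves all lie in $S_1\cup A$: $b'$ is contained in $S_1$ by construction, $b''$ lies in a regular neighbourhood of $\partial D\cup c^-\cup b_1\subset S_1\cup A$, and the two push-off curves lie in a neighbourhood of $b_1$ together with a small neighbourhood of $p_1$. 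Disjointness from the curves $a',a''$ of Lemma~\ref{lem:alpha} is then checked as follows: $a'\subset S_0$, and likewise the $S_0$-part of $a''$, is disjoint from all four curves, since $b',b''$ and the push-off curves avoid $S_0$; and the single arc $a''\cap A$ was produced from the arc $z$, which is disjoint from $c$ and in particular from $c^-$, while the support of $P$ was already arranged to miss this arc, so (choosing the $1$-handle and the disks $D^+,D^-$ suitably) $b',b''$ can be isotoped off it as well. Here one uses that the relevant pieces of $a''$ sit on the $S_0$-side of $A$, near $D^+$ and $z$, whereas the pieces of $b',b''$ and the push curves sit on the $S_1$-side, near $D^-$, $c^-$, $b_1$ and $p_1$.

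The step I expect to require the most care is the orientation bookkeeping. One must set up all the choices consistently — which boundary component of the pair of pants is named $b'$, the sign conventions in $T_{b''}T_{b'}^{-1}$, the direction of the loop defining $P$, and the identification of $b_1$ with the basis element $b$ — so that the homotopy computation on $\Gamma$ produces $b_1 * a_1 * b_1^{-1}$ and $b_1 * c$ on the nose, rather than an inverse or a differently conjugated word that would realize $B^{-1}$ or a conjugate of $B$ in place of $B$. One must also check that the homotopies used to simplify $\hat{\beta}(a_1)$ and $\hat{\beta}(c)$ — pushing excursions of these loops across the disk $D$ — are genuine homotopies in $V_3$ rel $p_1$. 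As in the treatment of $\alpha$ in Lemma~\ref{lem:alpha}, this last point uses that $\partial D$ bounds a disk in $V_3$, so that two arcs of $\partial V_3$ differing by a loop parallel to $\partial D$ become homotopic once pushed into the handlebody. The rest is a routine verification, entirely parallel to the construction of $\alpha$ in the previous subsection.
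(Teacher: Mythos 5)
Your proof is correct and follows essentially the same route as the paper: there the lemma is ``proved'' by the construction immediately preceding it, namely the decomposition $\beta = P\circ\hat{\beta}$, the intersection-number computation of $\hat{\beta}_*$ on the rose $\Gamma$ followed by conjugation by $b^{-1}$ via Lemma~\ref{lem:point-push-facts}~iii), and the observation that the supports of the four twists avoid $a'$ and the arc $a''\cap A$. One remark: you place the four curves in $S_1\cup A$ rather than the $S_0\cup A$ of the statement; your version is the correct one, since the construction takes place on the top surface $S_1$ and the later application in ``Completing the proof'' explicitly uses that the $d_i$ are disjoint from $S_0$ --- the statement as printed contains a typo.
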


\subsection*{Completing the proof}
Consider the homeomorphisms
\[ \tau^n\alpha\tau^{-n} \]
of $V_3$. Their support is contained in a small regular neighbourhood of $\tau^n(a'), \tau^n(a'')$.
Recall that $\tau$ preserves $S_0$ and hence $\tau^n(a') \subset S_0$. Thus
$\tau^n(a')$ is disjoint from all of the curves $d_i$ from
Lemma~\ref{lem:beta}. Furthermore, $\tau^n(a'') \cap A = a'' \cap A$
(since $\tau$ restricts to the identity on $A$). Hence, since the
$d_i$ from Lemma~\ref{lem:beta} are disjoint from $S_0$ and intersect
$A$ in arcs which are disjoint from $a''$, we have that $\tau^n(a'')$ is
disjoint from all $d_i$ for any $n$.

As a consequence, the 
homeomorphisms
$\tau^n\alpha\tau^{-n}$ and $\beta$ have (up to isotopy) disjoint
supports, and therefore define commuting mapping classes in $\Han_{3,1}$.
Therefore we conclude the following relation in $\Han_{3,1}$ 
\[ 1 = [\tau^n\alpha\tau^{-n}, \beta] =
\tau^n\alpha\tau^{-n}\beta\tau^n\alpha^{-1}\tau^{-n}\beta^{-1}, \]
where we have denoted the mapping classes defined by the
homeomorphisms $\alpha,\beta,\tau$ by the same symbols.  In other
words, if we choose a generating set of $\Han_{3,1}$ which contains
$\alpha, \beta, \tau$ then
\[ \omega_n =
  \tau^n\alpha\tau^{-n}\beta\tau^n\alpha^{-1}\tau^{-n}\beta^{-1} \]
are words in $\Han_{3,1}$ which define the trivial element.
Furthermore, under the map $\Han_{3,1} \to \Aut(F_3)$ they map exactly
to the words $w_n$ occurring in Theorem~\ref{thm:outf3-dehn}. By the
conclusion of that theorem, $w_n$ cannot be filled with
sub-exponential area in $\Aut(F_3)$. Since group homomorphisms
coarsely decrease area, the same is therefore true for the words
$\omega_n$. But, by construction, the length of the word $\omega_n$
grows linearly in $n$, showing that $\Han_{3,1}$ has
at least exponential Dehn function. The same is true for $\Han_3$,
since the words $w_n$ are also exponentially hard to fill in
$\Out(F_3)$ by  Theorem~\ref{thm:outf3-dehn}.

\smallskip To extend the proof of Theorem~\ref{thm:han3-dehn} to any
genus $g\geq 3$, we argue as follows. Consider the handlebody $V$ of
genus $3$ constructed above. Take a connected sum of $V$ with a
handlebody $V'$ of genus $g-3$ at a disk $D_g$ in the annulus $A$, which is 
disjoint from all curves used to define $\alpha, \beta$, to 
obtain a handlebody $V_g$ of genus $g$. The homeomorphisms
$\alpha,\beta,\tau$ can then be extended to homeomorphisms of $V_g$
which restrict to be the identity on $V'$. In this way
the words $\omega_n$ define trivial words $\hat{\omega}_n$ in $\Han_{g,1}$.

There is a natural map
\[ \iota:\Aut(F_3) \to \Aut(F_g) \] which maps an automorphism
$\varphi$ of $F\langle x_1, x_2, x_3\rangle$ to its extension to the
free group $\langle x_1, \ldots, x_g\rangle$ on $g$ generators which
fixes all $x_i, i>3$.

By construction, the words $\hat{\omega}_n$ map to the image of the
words $w_n$ under $\iota$. Corollary~4 of \cite{HM} (compare also \cite{HH-spheres}) shows that the
image of $\iota$ is a Lipschitz retract of $\Aut(F_g)$ and $\Out(F_g)$, and therefore
the words $\iota(w_n)$ are also exponentially hard to fill. By the
same argument as above, the same is therefore true for the words
$\hat{\omega}_n$.

\section{Waves in genus $2$}
\label{sec:waves}
In this section we study intersection pattern between meridians in a
genus two handlebody $V$. Recall that a \emph{cut system} of a genus two
handlebody is a pair $(\alpha_1, \alpha_2) \subset \partial V$ of
disjoint meridians with connected complement. Equivalently, cut systems
are the boundary curves of disjoint disks $D_1, D_2 \subset V$ so that
$V-(D_1\cup D_2)$ is a single $3$--ball.

The next proposition (which is true in any genus) is well-known,
see e.g. \cite[Lemma~1.1]{Masur-handlebodies} or \cite[Lemma~5.2 and the discussion preceding it]{HH1}.
\begin{proposition}\label{prop:surgery-paths}
  Suppose that $(\alpha_1, \alpha_2)$ is a cut system and $\beta$ is
  an arbitrary (multi)meridian. Either $\alpha_1\cup\alpha_2$ and
  $\beta$ are disjoint, or there is a subarc $b \subset \beta$, called
  a \emph{wave}, with the following properties:
  \begin{enumerate}[i)]
  \item The arc $b$ intersects $\alpha_1\cup\alpha_2$ only in
    its endpoints, and both endpoints lie on the same curve, say $\alpha_1$.
  \item The arc $b$ approaches $\alpha_1$ from the same side at both endpoints.
  \item Let $a, a'$ be the two components of $\alpha_1\setminus
    b$. Then exactly one of
    \[ (a\cup b, \alpha_2), (a'\cup b, \alpha_2) \] is a cut system,
    which we call \emph{the surgery defined by the wave $b$ in the
      direction of $\beta$}.
  \item The surgery defined by $b$ has fewer intersections with
    $\beta$ than $(\alpha_1, \alpha_2)$.
  \end{enumerate}
\end{proposition}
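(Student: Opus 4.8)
The plan is to run a standard innermost-disk argument. Isotope everything so that $\beta$ (thought of as a properly embedded multi-disk with boundary on $\partial V$) meets the disks $D_1, D_2$ bounded by $\alpha_1, \alpha_2$ transversely and in a minimal number of arcs and circles; after an innermost-circle surgery on $\beta$ we may assume there are no closed intersection curves, so $\beta \cap (D_1 \cup D_2)$ is a disjoint union of arcs properly embedded in $D_1 \sqcup D_2$. If this intersection is empty, then $\alpha_1 \cup \alpha_2$ and $\beta$ are disjoint and we are in the first case. Otherwise, some disk, say $D_1$, meets $\beta$ in at least one arc; choose an \emph{outermost} such arc $\delta$ in $D_1$, i.e. one cutting off a subdisk $D' \subset D_1$ whose interior is disjoint from $\beta$. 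The boundary of $D'$ consists of $\delta$ together with a subarc $b \subset \beta$ with endpoints on $\alpha_1$.

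First I would check properties i) and ii). By construction $b$ meets $\alpha_1 \cup \alpha_2$ only in its two endpoints — it is disjoint from $\alpha_2$ because it lies in the interior of $D_1$ away from $D_2$ (or more precisely, a small pushoff of it into $\partial V$ does), and it meets $\alpha_1$ only at $\partial \delta$. Both endpoints lie on $\alpha_1$, giving i). For ii): the outermost subdisk $D'$ lies entirely on one side of $D_1$ inside $V$; pushing $b$ slightly off $\alpha_1$ into $\partial V$ using $D'$ shows $b$ emanates from $\alpha_1$ on the same side at both ends, which is ii). Here I should be a little careful to note that $b$ approaching $\alpha_1$ from the same side is exactly the statement that the two "half-surgeries" below produce embedded curves.

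Next, properties iii) and iv), which are the heart of the matter. Write $\alpha_1 \setminus \partial b = a \sqcup a'$. Because $b$ approaches $\alpha_1$ from the same side, each of $a \cup b$ and $a' \cup b$ is (isotopic to) an embedded simple closed curve on $\partial V$; moreover each bounds a disk in $V$, obtained by banding $D_1$ with the outermost disk $D'$ along $\delta$ — this standard "disk surgery" cuts $D_1$ along $\delta$ into two disks with boundaries $a \cup b$ and $a' \cup b$ respectively, so both new curves are meridians, disjoint from $\alpha_2$. For iii) I must show exactly one of $(a\cup b,\alpha_2)$, $(a'\cup b,\alpha_2)$ is a cut system. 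Both pairs consist of disjoint meridians; a pair of disjoint meridians is a cut system precisely when the complement is connected, equivalently (in genus $2$) when the two curves are non-isotopic and together non-separating. One shows this using that $(\alpha_1,\alpha_2)$ was a cut system: in homology $[\alpha_1] = [a\cup b] + [a'\cup b]$ up to sign (the two bands recombine to $\alpha_1$), and since $\{[\alpha_1],[\alpha_2]\}$ is part of a symplectic basis, exactly one of $[a\cup b], [a'\cup b]$ pairs nontrivially with the symplectic complement of $[\alpha_2]$ — this forces exactly one of the two candidate pairs to have connected complement while the other is separating or degenerate. For iv), the surgered multi-disk $\beta$ is unchanged, but the new cut system is disjoint from the outermost subdisk $D'$ and hence has strictly fewer intersection arcs with $\beta$ than $(\alpha_1,\alpha_2)$ did: we removed the arc $\delta$ and introduced none.

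The step I expect to be the main obstacle is the bookkeeping in iii): pinning down \emph{exactly one} (not zero, not two) of the two surgered pairs is a cut system. The cleanest route is the homological one sketched above — track the classes in $H_1(\partial V)$ (or in $H_1(V)$ together with the knowledge that meridians are nonzero in $H_1(\partial V)$), use that $(\alpha_1, \alpha_2)$ extends to a geometric symplectic basis, and argue by the relation $[\alpha_1] = \pm[a\cup b] \pm [a'\cup b]$ that precisely one summand survives the constraints imposed by disjointness from $\alpha_2$ and non-separating. One also has to rule out the degenerate possibility that $a$ or $a'$ is inessential (so $a\cup b$ is isotopic to $\alpha_1$) or that $a \cup b$ is isotopic to $\alpha_2$; both are excluded by minimality of the original intersection number of $\beta$ with $\alpha_1 \cup \alpha_2$. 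Since this proposition is only invoked as a known fact (it is cited to \cite{Masur-handlebodies} and \cite{HH1}), I would keep this argument brief and refer to those sources for the full verification.
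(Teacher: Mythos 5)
The paper does not actually prove this proposition; it is quoted as well known, with references to Masur and to \cite{HH1}. Your outermost--arc strategy is the standard one those sources use, and the overall shape of your argument is right, but two steps are garbled as written. First, the outermost arc has to be taken in the disk(s) bounded by $\beta$, not in $D_1$: a subdisk of $D_1$ cut off by an arc $\delta$ of intersection has boundary $\delta\cup(\text{subarc of }\alpha_1)$, so your sentence ``a subdisk $D'\subset D_1$ whose boundary consists of $\delta$ together with a subarc $b\subset\beta$'' is self-contradictory. You need $D'$ to be an outermost subdisk of the $\beta$-disk, with $\partial D'=\delta\cup b$, $\delta$ properly embedded in $D_1$ and $b\subset\beta$; then $\delta$ cuts $D_1$ into $E,E'$ and the meridians are $E\cup D'$ and $E'\cup D'$. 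Your later banding step implicitly uses this corrected $D'$, so the slip is repairable, but as stated the selection of the wave fails.

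Second, in iii) your homological bookkeeping is underspecified in one place and wrong in another. Disjointness from $\alpha_2$ alone does not pin down exactly one surgery: you must also use that $[a\cup b]$ and $[a'\cup b]$ both lie in the Lagrangian $L=\langle[\alpha_1],[\alpha_2]\rangle\subset H_1(\partial V;\mathbb{Z}/2)$ (either because both are meridians, or because the same-side property lets you isotope both off $\alpha_1\cup\alpha_2$). Given that, $[a\cup b]+[a'\cup b]=[\alpha_1]$ is nonzero in $L/\langle[\alpha_2]\rangle\cong\mathbb{Z}/2$, so exactly one of the two classes is independent of $[\alpha_2]$, and that one gives the cut system. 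Relatedly, your claim that ``$a\cup b$ isotopic to $\alpha_2$'' is excluded by minimal position is false: for a wave of a meridian in genus $2$ one of the two surgered curves typically \emph{is} isotopic to $\alpha_2$ (this is exactly the curve $\delta$ homotopic to $\alpha_2^-$ in the paper's Lemma~\ref{lem:g2waves-partner}); it is simply the surgery that fails to be a cut system, and the homology count above already accounts for it, so nothing needs to be ruled out. Only the degenerate case in which $b$ is boundary-parallel (so one surgery is isotopic to $\alpha_1$ and the other is inessential) is excluded by minimal position.
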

We say that a sequence $\left(\alpha_i^{(n)}\right)_n$ of cut systems is a
\emph{surgery sequence in the direction of $\beta$} if each
$\left(\alpha_i^{(n+1)}\right)$ is the surgery of $\left(\alpha_i^{(n)}\right)$ defined by
some wave $b$ of $\beta$. By Proposition~\ref{prop:surgery-paths},
these exist for any initial cut system, and they end in a system which
is disjoint from $\beta$.

\medskip The following lemmas describe certain symmetry and
uniqueness features of waves in genus $2$. They are central
ingredients in our study of projection maps in the next section.
The results of these lemma are discussed and essentially proved in
Section~4 of \cite{Masur-handlebodies}. We include a proof for
completeness and convenience of the reader.

\medskip
To describe them, it is easier to take a slightly different point of view.
Namely, let $(\alpha_1, \alpha_2)$ be a cut system of a genus $2$
handlebody $V$.
Consider $S = \partial V - (\alpha_1\cup\alpha_2)$. This is a
four-holed sphere, with boundary components
$\alpha_1^+, \alpha_1^-, \alpha_2^+, \alpha_2^-$ corresponding to the
sides of the $\alpha_i$. A wave $b$ corresponds exactly to a
subarc of $\beta$ which joins one of the boundary
components of $S$ to itself.

At this point we want to emphasize that when considering arcs in $S$, we
always consider them up to homotopy which is allowed to move the endpoints
(in $\partial S$).

\begin{lemma}\label{lem:g2wave-separate}
  Let $\beta$ be a meridian, and $b\subset S$ be a wave of $\beta$
  joining a boundary component $\alpha_i^*$ to itself. Then $b$ separates
  the two boundary components $\alpha_{1-i}^+$ and $\alpha_{1-i}^-$.
\end{lemma}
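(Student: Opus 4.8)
The plan is to work inside the planar surface $S$, relying only on its genus-zero topology, properties (i)--(iv) of Proposition~\ref{prop:surgery-paths}, and the standing hypothesis (in force throughout this section) that $\beta$ has been isotoped into minimal position with $\alpha_1\cup\alpha_2=\partial S$; in particular every wave of $\beta$ is a non-peripheral arc of $S$, since a peripheral wave would cut off a bigon between $\beta$ and $\alpha_1$. (Some such hypothesis is needed: for a non-essential wave the conclusion is simply false.) Normalize so that $b$ joins $\alpha_1^+$ to itself, so that we must show $b$ separates $\alpha_2^+$ from $\alpha_2^-$; the other cases are symmetric. Write $a^+$ and $a'^+$ for the two arcs into which the endpoints of $b$ divide $\alpha_1^+$.

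The first step is to record the combinatorics of cutting $S$ along $b$. Since $S$ has genus zero, the properly embedded arc $b$ separates it into two planar pieces $S'$ and $S''$. The three boundary circles $\alpha_1^-,\alpha_2^+,\alpha_2^-$ are disjoint from $b$, so each lies entirely in $S'$ or entirely in $S''$; the fourth circle $\alpha_1^+$ is cut open at the two endpoints of $b$, and the local model of cutting a disk along a chord shows that one of the two resulting circles, $c':=a^+\cup b$, lies on $S'$ and the other, $c'':=a'^+\cup b$, on $S''$. Under the gluing $\partial V=S/(\alpha_1^+\sim\alpha_1^-,\,\alpha_2^+\sim\alpha_2^-)$, these are precisely, up to isotopy, the curves $a\cup b$ and $a'\cup b$ of Proposition~\ref{prop:surgery-paths}. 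Being planar, each of $S',S''$ has Euler characteristic $2$ minus its number of boundary circles; I will use this to recognize a piece with one boundary circle as a disk and a piece with two boundary circles as an annulus.

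Now I argue by contradiction: assume $\alpha_2^+$ and $\alpha_2^-$ lie on the same side of $b$, say (swapping $S'$ and $S''$ if needed) both in $S'$. Then $\partial S''$ consists of $c''$ together with a subset of $\{\alpha_1^-\}$, so $S''$ is a disk (boundary $c''$) or an annulus (boundary $c''\cup\alpha_1^-$). In the disk case $c''=a'^+\cup b$ bounds a disk in $S$, so $b$ is homotopic, with endpoints sliding in $\alpha_1^+$, into $\partial S$, contradicting that $b$ is non-peripheral. In the annulus case $c''$ is isotopic to $\alpha_1^-$ in $S$, hence $c''\simeq\alpha_1$ in $\partial V$, while at the same time $S'$ is a pair of pants with $\partial S'=c'\cup\alpha_2^+\cup\alpha_2^-$; gluing $\alpha_2^+$ to $\alpha_2^-$ makes this pair of pants a one-holed torus bounded by $c'$ and containing $\alpha_2$, so $c'$ separates $\partial V$ into that one-holed torus and another one-holed torus. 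Consequently $\partial V-(c'\cup\alpha_2)$ is the disjoint union of a three-holed sphere and a one-holed torus, hence is disconnected, so $(c',\alpha_2)$ is not a cut system. By Proposition~\ref{prop:surgery-paths}(iii) the surgery defined by $b$ is therefore $(c'',\alpha_2)$, and by (iv) it has strictly fewer intersections with $\beta$ than $(\alpha_1,\alpha_2)$. But $c''\simeq\alpha_1$, so $(c'',\alpha_2)$ is isotopic to $(\alpha_1,\alpha_2)$ and realizes the same geometric intersection number with $\beta$ --- a contradiction. This rules out both cases, so $b$ separates $\alpha_2^+$ from $\alpha_2^-$.

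The step I expect to be the main obstacle is the bookkeeping in the second paragraph: verifying that cutting $S$ along $b$ attaches $a^+\cup b$ to one side and $a'^+\cup b$ to the other (rather than, say, leaving $\alpha_1^+$ whole on one side), and matching these two curves correctly with the surgered curves of Proposition~\ref{prop:surgery-paths} under the gluing $\partial V=S/\!\sim$. One must also be careful that ``wave'' is being read as ``essential (non-peripheral) wave'' --- equivalently that $\beta$ is in minimal position with $\alpha_1\cup\alpha_2$ --- since otherwise the statement fails. Everything else is Euler-characteristic counting for planar surfaces together with the two soft facts used above (disconnectedness obstructs being a cut system; isotopic curves have equal geometric intersection numbers).
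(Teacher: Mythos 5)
Your proof is correct in its topology but takes a genuinely different route from the paper's. The paper argues directly with the curve $\beta$: if $b$ fails to separate $\alpha_{1-i}^+$ from $\alpha_{1-i}^-$, then $b$ together with an arc of $\alpha_i$ is parallel to the opposite side of $\alpha_i$, and continuing $\beta$ past the endpoints of $b$ traps it in the resulting annulus so that the embedded curve cannot close up; only embeddedness and minimal position are used. You instead classify the two complementary pieces of $b$ in $S$ by an Euler characteristic count, kill the disk case with the bigon criterion, and in the annulus case derive a contradiction from Proposition~\ref{prop:surgery-paths}(iii)--(iv): the surgered curve $c''$ would be isotopic to $\alpha_1$, so surgery could not reduce intersection with $\beta$. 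Your bookkeeping is right ($b$ does separate $S$, the circles $a^+\cup b$ and $a'^+\cup b$ land on opposite sides, and in the annulus case $c'$ is separating in $\partial V$, so $(c',\alpha_2)$ is not a cut system).

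The one point to watch is logical order. Proposition~\ref{prop:surgery-paths} asserts the existence of \emph{a} wave with properties (iii)--(iv), whereas Lemma~\ref{lem:g2wave-separate} concerns an arbitrary returning arc, and it is exactly this lemma (with Lemma~\ref{lem:g2waves-partner} and Corollary~\ref{cor:compatible-surgeries}) that upgrades (iii)--(iv) to \emph{all} waves in genus $2$; invoking (iv) for your given $b$ is therefore, read strictly, circular. The gap is easy to close without the black box: the surgered curve $a'\cup b$, pushed off, meets $\beta$ in at most $|\beta\cap\mathrm{int}(a')|\le i(\alpha_1,\beta)-2$ points, while a curve isotopic to $\alpha_1$ must meet $\beta$ in at least $i(\alpha_1,\beta)$ points by minimal position --- the same contradiction, now from first principles. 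With that substitution your argument is complete; the paper's version remains the more economical one, as it needs no surgery input at all and so can safely feed into the later surgery statements.
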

\begin{proof}
  Let $b$ be such a wave, joining without loss of generality
  $\alpha_1^+$ to itself. Suppose that $\alpha_2^-$ and $\alpha_2^+$
  are contained in the same component of $S-b$.  Then $b$ and a subarc
  $a \subset \alpha_1$ concatenate to a curve homotopic to
  $\alpha_1^-$ on $S$. Continue $b$ beyond one of its endpoints across
  $\alpha_1$ to form a larger subarc of $\beta$. It then exits from
  $\alpha_1^-$ and, by minimal position, has its next intersection
  point with $\alpha_1\cup\alpha_2$ in $a$ again. This can be
  iterated, and leads to a contradiction as the curve $\beta$ then
  cannot close up (compare
  Figure~\ref{fig:badwave}).
  \begin{figure}[h!]
    \centering
    \includegraphics[width=0.3\textwidth]{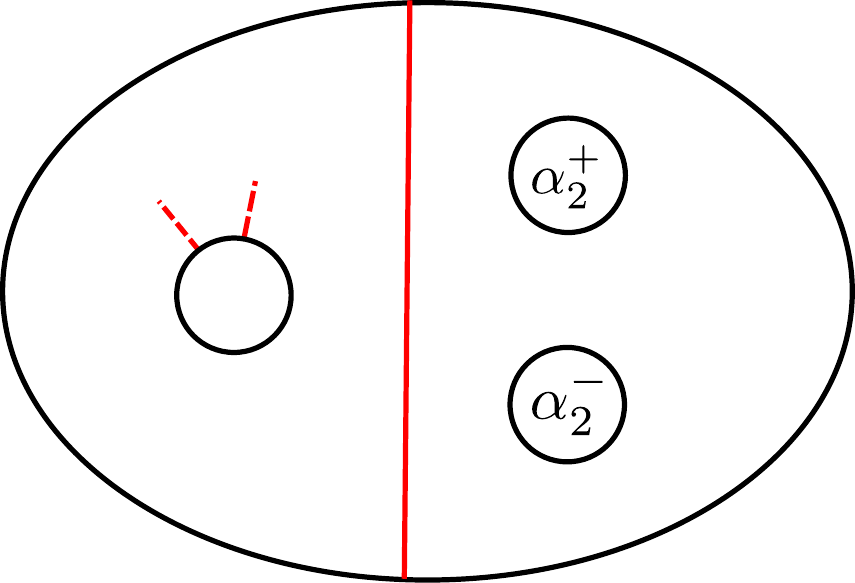}
    \caption{A "bad" wave -- the central (red) arc cannot be part of a meridian, since both ends would have to continue into the left annulus, and are unable to close up to a simple closed curve.}
    \label{fig:badwave}
  \end{figure}
\end{proof}

\begin{lemma}\label{lem:g2waves-partner}
  Suppose $(\alpha_1, \alpha_2)$ is a cut system of $V$, and $\beta$
  is any meridian. Suppose that $\beta$ has a wave $b^+$ at
  $\alpha^+_i$.
  \begin{enumerate}[i)]
  \item There is a unique essential arc $b^- \subset S$ with both
    endpoints on $\alpha^-_i$ which is disjoint from $b^+$.
  \item The arc $b^-$ from i) appears as a wave of $\beta$.
  \item Any wave of $\beta$ is homotopic to either $b^+$ or $b^-$.
  \end{enumerate}
  (The same is true with the roles of $\alpha^-_i, \alpha^+_i$ reversed)
\end{lemma}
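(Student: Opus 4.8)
The plan is to work entirely in the four-holed sphere $S = \partial V - (\alpha_1 \cup \alpha_2)$, using the symmetry $\alpha_1^+ \leftrightarrow \alpha_1^-$ and the conclusion of Lemma~\ref{lem:g2wave-separate} as the main geometric inputs. Without loss of generality assume $b^+$ joins $\alpha_1^+$ to itself. For part i), I would first observe that $b^+$ cuts $S$ into two pieces, and by Lemma~\ref{lem:g2wave-separate} the boundary components $\alpha_2^+$ and $\alpha_2^-$ lie in different pieces. An arc $b^-$ with both endpoints on $\alpha_1^-$ and disjoint from $b^+$ must lie entirely in one of these two pieces, say $X$; then $X$ is a planar surface whose boundary consists of (part of) $\alpha_1^+$, a copy of $\alpha_1^-$, one of $\alpha_2^\pm$, and the arc $b^+$. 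For $b^-$ to be \emph{essential} (not homotopic, rel endpoints allowed to move, into $\partial S$) it must separate the two boundary circles $\alpha_1^-$ and $\alpha_2^*$ that it is not incident to --- the only other option being that it is boundary-parallel, which is excluded. A short Euler-characteristic/classification argument for arcs in a planar surface shows there is exactly one such isotopy class; and the same count in the other piece shows no essential arc there is disjoint from $b^+$ and based on $\alpha_1^-$. This yields uniqueness.

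For part ii), I would use the same iteration idea as in the proof of Lemma~\ref{lem:g2wave-separate}. Since $\beta$ is a meridian that intersects $\alpha_1$ (it has a wave at $\alpha_1^+$), consider the arcs of $\beta \cap S$. At least one such arc $b'$ has an endpoint on $\alpha_1^-$: indeed $\beta$ enters and exits $\alpha_1$ equally often on each side in minimal position. Following $\beta$ and using that it is embedded and must close up, I would argue that some arc of $\beta$ based at $\alpha_1^-$ must in fact be a wave at $\alpha_1^-$ (rather than crossing over to $\alpha_2$): the alternative forces the kind of non-closing configuration ruled out in Figure~\ref{fig:badwave}. Once we know $\beta$ has a wave at $\alpha_1^-$, part iii) combined with the fact that waves of $\beta$ are pairwise disjoint (they are disjoint subarcs of the simple closed curve $\beta$) forces that wave to be disjoint from $b^+$, hence by uniqueness in i) it is homotopic to $b^-$.

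For part iii), let $b$ be any wave of $\beta$. It is disjoint from $b^+$ (again, distinct subarcs of an embedded curve, after putting $\beta$ in minimal position with $\alpha_1 \cup \alpha_2$), and it joins some boundary component of $S$ to itself. If $b$ is based at $\alpha_1^+$, then being disjoint from $b^+$ and essential it must be homotopic to $b^+$ by the classification of essential arcs on $\alpha_1^+$ in $S - b^+$ (the same planar-surface count as in i)). If $b$ is based at $\alpha_1^-$, it is homotopic to $b^-$ by the uniqueness in i). The remaining case is that $b$ is based at $\alpha_2^\pm$; but by Lemma~\ref{lem:g2wave-separate} such a wave would separate $\alpha_1^+$ from $\alpha_1^-$, while $b^+$ connects $\alpha_1^+$ to itself, and disjointness then forces $b^+$ into one component --- a configuration that, traced out as in Lemma~\ref{lem:g2wave-separate}, again prevents $\beta$ from closing up.

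\textbf{Main obstacle.} I expect the delicate point to be part ii): promoting the bare existence of \emph{some} arc of $\beta$ touching $\alpha_1^-$ to the existence of an actual \emph{wave} of $\beta$ at $\alpha_1^-$, as opposed to an arc running from $\alpha_1^-$ to $\alpha_2$. This is exactly where the global "$\beta$ is a simple closed curve that must close up" constraint has to be invoked carefully, in the spirit of the iteration in Lemma~\ref{lem:g2wave-separate} and Figure~\ref{fig:badwave}; the planar-surface arc classification used for i) and iii) is comparatively routine.
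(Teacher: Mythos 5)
Your part i) matches the paper's argument: cutting $S$ along $b^+$ and using Lemma~\ref{lem:g2wave-separate} to see that one complementary piece is an annulus (containing one of $\alpha_2^{\pm}$) and the other a pair of pants (containing $\alpha_1^-$ and the other $\alpha_2^{\pm}$), so that the arc class $b^-$ is the unique essential arc based at $\alpha_1^-$ in that pair of pants. That part is fine.

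The genuine gap is in part ii), exactly the step you flag as delicate. Knowing that $\beta$ has $i(\alpha_1,\beta)$ endpoints on $\alpha_1^-$ does not let you run the trapping iteration of Lemma~\ref{lem:g2wave-separate}: that iteration works because a single ``bad'' wave confines $\beta$ to an annulus forever, whereas the hypothesis ``no wave at $\alpha_1^-$'' only says that every arc leaving $\alpha_1^-$ runs to $\alpha_2^+$ or to $\alpha_1^+$, and such arcs can perfectly well close up into a simple closed curve wandering back and forth through $\alpha_2$ --- there is no single annulus that traps $\beta$, so no ``non-closing configuration'' appears. What is actually needed is a global count. The paper completes $b^+$ to the pants decomposition $(\alpha_1,\alpha_2,\delta')$, where $\delta'$ is the boundary curve of a neighborhood of $\alpha_1^+\cup b^+$ lying in the pair-of-pants side; in the pair of pants $P_1$ containing $b^+$, every arc of $\beta$ must meet $\alpha_1$ (any essential arc of $P_1$ missing $\alpha_1$ would have to cross $b^+$), and $b^+$ meets it twice, giving the strict inequality $i(\alpha_1,\beta)>i(\alpha_2,\beta)+i(\delta',\beta)$; if the other pair of pants $P_2$ contained no arc of $\beta$ from $\alpha_1$ to itself, the reverse weak inequality would hold. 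This counting step is the content of ii) and is missing from your plan. (A smaller instance of the same problem occurs in your part iii): a wave at $\alpha_2^+$ disjoint from $b^+$ and separating $\alpha_1^+$ from $\alpha_1^-$ is topologically compatible with $b^+$, so ``prevents $\beta$ from closing up'' is not a valid reason to exclude it; the correct exclusion uses disjointness from $b^-$, since $\alpha_2^+$ lies in an annulus component of $S-b^-$. Your part ii) also cites part iii), which should be avoided or replaced by the observation that distinct arcs of $\beta\cap S$ are automatically disjoint.)
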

\begin{proof}
  Let $b^+$ be a wave as in the prerequisites, joining without loss of
  generality $\alpha_1^+$ to itself. Denote the complementary
  components of $b^+$ in $S$ by $C_1$ and $C_2$.  The three boundary
  components $\alpha_1^-, \alpha_2^-, \alpha_2^+$ are contained in
  $C_1 \cup C_2$. 
  By Lemma~\ref{lem:g2wave-separate}, $\alpha_2^-$ and
  $\alpha_2^+$ are not contained in the same $C_i$. We may therefore 
  assume that $C_1$ contains $\alpha_2^-$, and $C_2$ contains $\alpha_2^+$ and 
  $\alpha_1^-$. In particular, $C_1$ is a annulus with core curve homotopic to
  $\alpha_2^-$, and $C_2$ is a pair of pants.

  Consider the boundary of a regular neighborhood of $\alpha_1^+ \cup
  b^+$ in $S$. This consists of two simple closed curves $\delta,
  \delta'$ which bound a pair of pants together with $\alpha_1^+$. Up to relabeling
  we have that $\delta \subset C_1, \delta' \subset C_2$.
  By the discussion above $\delta$ is then homotopic to
  $\alpha_2^-$, and $\delta'$ bounds a pair of pants with $\alpha_1^-, \alpha_2^+$ 
  (compare Figure~\ref{fig:goodwave}). In a pair of pants there is a
  unique isotopy class of arcs joining a given boundary component to
  itself, and hence assertion i) is true.  In an annulus there is no
  (essential) arc joining a boundary component to itself, we thus also
  conclude that there cannot be a wave of $\beta$ based at
  $\alpha_2^-$.

  \begin{figure}[h!]
    \centering
    \includegraphics[width=0.3\textwidth]{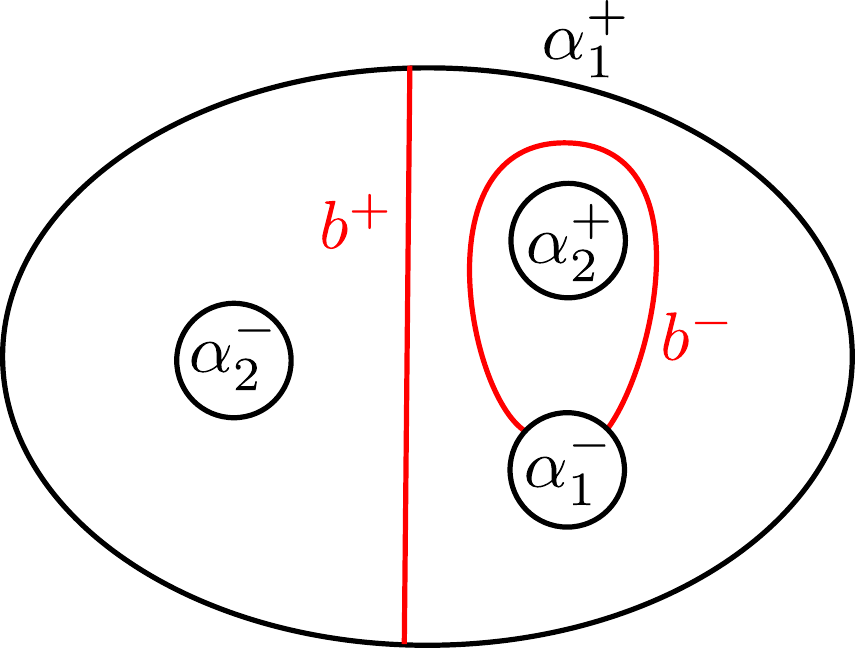}
    \caption{A "good" wave -- the central (red) arc is a wave, and it has a partner based at $\alpha_1^-$. Any other wave is necessarily homotopic to either $b^+$ or $b^-$.}
    \label{fig:goodwave}
  \end{figure}

  \smallskip Next, observe that $(\alpha_1, \alpha_2, \delta')$ is a
  pair of pants decomposition consisting of three non-separating
  curves. Let $P_1, P_2$ be the two components of $\partial V -
  (\alpha_1\cup\alpha_2\cup\delta')$. Both $P_1$ and $P_2$ are pairs of
  pants whose boundary curves are $\alpha_1, \alpha_2, \delta'$.
  Suppose that $P_1$ contains $b^+$. Since $b^+$ joins the boundary
  component $\alpha_1$ of $P_1$ to itself, and $\beta$ is embedded,
  every component of $P_1 \cap \beta$ has at least one endpoint on
  $\alpha_1$. Since $b^+$ has both endpoints on $\alpha_1^+$ we
  therefore conclude the following inequality on intersection numbers:
  \[ i(\alpha_1, \beta) > i(\alpha_2,\beta) + i(\delta',\beta). \]
  Now consider the situation in $P_2$. If there would not be an arc $b^- \subset P_2$ which
joins $\alpha_1$ to itself, then any arc in $P_2 \cap \beta$ which has one endpoint
on $\alpha_1$ has the other on $\alpha_2$ or $\delta'$. This would imply
  \[ i(\alpha_1, \beta) \leq i(\alpha_2,\beta) + i(\delta',\beta) \]
  which contradicts the inequality above. Hence, there is an arc
  $b^- \subset \beta$ which joins $\alpha_1$ to itself in $P_2$.  We have
  therefore found the desired second wave $b^-$, showing ii).

  In order to show iii), we only have to exclude a wave based at
  $\alpha_2^+$. However, this follows as above since $\alpha_2^+$ is
  contained in the annulus bounded by $b^-$ and $\alpha_2^-$.
\end{proof}

We also observe the following corollary.
\begin{corollary}\label{cor:compatible-surgeries}
  Let $(\alpha_1, \alpha_2)$ be a cut system of a genus $2$
  handlebody. Let $\beta$ be an arbitrary meridian and $b^+, b^-$ be the
  two distinct waves guaranteed by Lemma~\ref{lem:g2waves-partner}. Then
  the surgeries defined by $b^+$ and $b^-$ are equal.

  In particular, there is a unique surgery sequence starting in
  $(\alpha_1, \alpha_2)$ in the direction of $\beta$.
\end{corollary}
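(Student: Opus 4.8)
The plan is to reduce the statement to a purely local comparison of the two partner waves $b^+$ and $b^-$ supplied by Lemma~\ref{lem:g2waves-partner}, and then to obtain the ``in particular'' clause by a one-line induction. So fix a cut system $(\alpha_1,\alpha_2)$ and a meridian $\beta$ which meets $\alpha_1\cup\alpha_2$; after relabelling we may assume $\beta$ has a wave $b^+$ at $\alpha_1^+$, with partner wave $b^-$ at $\alpha_1^-$. Since Lemma~\ref{lem:g2waves-partner}~iii) tells us that every wave of $\beta$ is homotopic to $b^+$ or to $b^-$, it is enough to show that the surgeries defined by $b^+$ and by $b^-$ are the same cut system.

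For the wave $b^+$ I would reuse the picture produced in the proof of Lemma~\ref{lem:g2waves-partner}: there one gets a curve $\delta'$ for which $(\alpha_1,\alpha_2,\delta')$ is a pair of pants decomposition of $\partial V$ by three non-separating curves, with complementary pants $P_1\supset b^+$ and $P_2\supset b^-$, and the boundary of a regular neighbourhood of $\alpha_1^+\cup b^+$ in $S$ consists of one curve isotopic to $\alpha_2$ and one curve isotopic to $\delta'$. Up to isotopy these two curves are exactly the two candidate surgeries $a\cup b^+$ and $a'\cup b^+$ of $\alpha_1$ along $b^+$. Now two disjoint simple closed curves which are both isotopic to the non-separating curve $\alpha_2$ cut off an annulus and hence have disconnected complement in $\partial V$, so they do not form a cut system; therefore the surgery defined by $b^+$ must be $(\delta',\alpha_2)$.

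The same argument applies to $b^-$, using that $b^-$ is an essential arc in the pair of pants $P_2$ joining its boundary component $\alpha_1$ to itself (this is the only essential such arc, by the remark on pairs of pants used in the proof of Lemma~\ref{lem:g2waves-partner}). Cutting $P_2$ along $b^-$ produces two annuli, one with core curve isotopic to $\alpha_2$ and one with core curve isotopic to $\delta'$, and the boundary circles of these annuli other than $b^-$ are precisely the two candidate surgeries of $\alpha_1$ along $b^-$, up to isotopy. As before the $\alpha_2$-parallel candidate cannot be completed to a cut system by $\alpha_2$, so the surgery defined by $b^-$ is again $(\delta',\alpha_2)$; the two surgeries agree. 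For the ``in particular'' I would then argue by induction on $i(\alpha_1^{(n)}\cup\alpha_2^{(n)},\beta)$: at any cut system along a surgery sequence that still meets $\beta$, Proposition~\ref{prop:surgery-paths} produces a wave of $\beta$, all waves define the same surgery by what we just proved, so the next cut system is forced; and the sequence terminates because the intersection number with $\beta$ strictly decreases (Proposition~\ref{prop:surgery-paths}~iv)).

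The step I expect to cost the most attention is the bookkeeping: identifying the abstract ``surgery candidates'' $a\cup b$, $a'\cup b$ of a wave $b$ at $\alpha_1$ with the two honest boundary curves of a regular neighbourhood of $\alpha_1\cup b$ (equivalently, with the core curves of the two annuli obtained by cutting a pair of pants along an essential arc joining a boundary component to itself), and then checking that exactly one of them -- the one isotopic to $\delta'$ rather than to $\alpha_2$ -- yields a cut system together with $\alpha_2$. A minor additional point to verify is that Lemma~\ref{lem:g2waves-partner}~iii) really does exclude waves based at $\alpha_2$, so that the induction in the final step covers every wave that can arise.
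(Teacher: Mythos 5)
Your argument is correct and follows the same route as the paper: the paper's proof simply observes that the curve $\delta'$ from the proof of Lemma~\ref{lem:g2waves-partner} is a boundary component of a regular neighbourhood of both $\alpha_1^+\cup b^+$ and $\alpha_1^-\cup b^-$, hence is the surgery defined by either wave. You merely make explicit the implicit step that the other candidate boundary curve is parallel to $\alpha_2$ and therefore cannot complete $\alpha_2$ to a cut system, which is a worthwhile but not essentially different elaboration.
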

\begin{proof}
  The first claim is obvious from the fact that (in the notation of
  the proof of Lemma~\ref{lem:g2waves-partner}) $\delta'$ is homotopic
  to a boundary component of a regular neighborhood of
  $\alpha_1^+ \cup b^+$ and $\alpha_1^-\cup b^-$, and is therefore
  equal to the surgery defined by both $b^+$ and $b^-$. The second
  claim is immediate from the first.
\end{proof}

\section{Meridian Graphs}
\label{sec:complexes}
The purpose of this section is to construct a tree on which the
handlebody group $\Han_2$ acts, and which is crucial for the
construction of an action of $\Han_2$ on a 
${\rm CAT}(0)$ cube complex 
in Section~\ref{sec:consequences-2}.

\subsection{The wave graph (is a tree)}
\label{sec:wavegraph}
We begin with a construction of a graph which will model all possibilities
to change a cut system $Z$ to a disjoint cut system $Z'$.

To this end, we fix in this subsection once and for all a cut system $Z$. We
are interested in describing all curves $\delta$ in the complement of
$Z$ which are non-separating meridians on $\partial V$. The following
lemma allows us to encode them in a convenient way.

\begin{lemma}\label{lem:wave-description}
  Let $Z=\{\alpha_1, \alpha_2\}$ be a cut system, and $S= \partial V - Z$ its
  complementary subsurface. Fix a boundary component $\partial_0$ of
  $S$. Then the following are true:
  \begin{enumerate}[i)]
  \item A curve $\delta \subset S$ is non-separating on $\partial V$
    exactly if for both $i=1,2$, the two boundary components of $S$
    corresponding to the sides of $\alpha_i$ are contained in different
    complementary components of $\delta$.
  \item Given any $\delta$ as in i), there is a unique embedded arc
    $w$ in $S$ with both endpoints on $\partial_0$ disjoint from $\delta$,
    and it separates the two boundary components corresponding to the
    curve of $Z$ that $\partial_0$ does not correspond to. We call
    such an arc an \emph{admissible wave}.
  \item Conversely, if $w$ is any admissible wave, there is a unique curve
    $\delta$ defining it via ii).
  \item Two curves $\delta, \delta'$ as in i) intersect in two points exactly if
    the corresponding admissible wave $w, w'$ are disjoint.
  \end{enumerate}
\end{lemma}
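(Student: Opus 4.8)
The plan is to prove the four assertions in order, since ii) and iii) build directly on i), and iv) is a refinement using the structure uncovered in ii). Throughout, recall that $S = \partial V - Z$ is a four-holed sphere with boundary components $\alpha_1^\pm, \alpha_2^\pm$, and that a simple closed curve $\delta \subset S$ separates $S$ into two pieces; the condition in i) is about how the four boundary components distribute among these two pieces.

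For i), I would argue via the effect of cutting on homology or on connectedness of the complement in $V$. A curve $\delta$ on $\partial V$ disjoint from $Z$ is non-separating on $\partial V$ exactly when $\partial V - (\alpha_1 \cup \alpha_2 \cup \delta)$ is connected, equivalently when $\delta$ together with $Z$ still leaves $\partial V$ connected. Since $S$ is planar (a four-holed sphere), $\delta$ bounds on each side, and the two complementary components each contain some subset of $\{\alpha_1^+, \alpha_1^-, \alpha_2^+, \alpha_2^-\}$. When we reglue along the $\alpha_i$ to recover $\partial V$, the resulting surface is disconnected precisely when both boundary components corresponding to the \emph{same} $\alpha_i$ end up on the same side of $\delta$ for every $i$ — no, more carefully: gluing identifies $\alpha_i^+$ with $\alpha_i^-$, so the regluing reconnects the two sides of $\delta$ unless \emph{both} sides of $\delta$ are closed up separately, which happens exactly when for \emph{each} $i$ the two copies $\alpha_i^\pm$ lie on the \emph{same} side. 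Hence $\delta$ is non-separating on $\partial V$ iff for at least one — in fact, by a parity/counting argument on the four-holed sphere, for both — $i$ the two sides of $\alpha_i$ lie in different components of $S - \delta$. The essential combinatorial input is that on a four-holed sphere an essential curve separates the four boundary circles as $2+2$, so "different components for $i=1$" forces "different components for $i=2$"; I would spell this out as the main case analysis.

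For ii) and iii): given $\delta$ as in i), it separates $\{\alpha_1^+, \alpha_1^-\}$ and also $\{\alpha_2^+,\alpha_2^-\}$, so the two complementary components of $\delta$ in $S$ are each annuli-or-pants; with $\partial_0$ one of the four components, say $\partial_0 = \alpha_1^+$, the component of $S - \delta$ containing $\partial_0$ is a pair of pants with boundary $\alpha_1^+$, $\alpha_2^{\epsilon}$, $\delta$ for one sign $\epsilon$. In a pair of pants there is a unique isotopy class of essential arc from a given boundary component to itself, which gives the admissible wave $w$; uniqueness of $w$ disjoint from $\delta$ follows because any such arc lies in the $\partial_0$-side of $\delta$. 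That $w$ separates the two sides of the $\alpha_i$ not corresponding to $\partial_0$ is read off from the pants picture. Conversely, given an admissible wave $w$ at $\partial_0$, a regular neighborhood of $\partial_0 \cup w$ has boundary consisting of $\partial_0$-parallel curve plus two curves $\delta, \delta'$; the separation property of $w$ pins down which of these is the curve of $Z$-type and which is the desired $\delta$, and one checks $\delta$ satisfies i) and recovers $w$. This is essentially the argument already run in the proof of Lemma~\ref{lem:g2waves-partner}, so I would cite that structure. For iv), I would compute directly in the four-holed sphere: $\delta$ and $\delta'$ are both $2+2$ separating curves; they realize minimal intersection $0$ or $2$, and $i(\delta,\delta')=2$ holds exactly when the two $2+2$ partitions are "transverse" (share no common pair), which by ii)--iii) translates into the admissible waves $w,w'$ lying in disjoint pants-subsurfaces, hence being disjoint. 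I would make this precise by noting $w$ is determined by $\delta$ up to the choice of pants-component containing $\partial_0$, and tracking the combinatorics of the two partitions.

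**Expected main obstacle.** The delicate point is part i), specifically justifying cleanly why the separation condition for one $\alpha_i$ is equivalent to (not merely implies) non-separating on $\partial V$, and why it then automatically holds for the other $\alpha_i$. The cleanest route is probably the homological one: $[\delta] \in H_1(\partial V;\ZZ/2)$ is nonzero iff $\delta$ is non-separating, and one expresses $[\delta]$ in a basis adapted to $Z$, reducing the topological claim to a linear-algebra statement about which side of $\delta$ the arcs of a dual curve cross. Making the "$2+2$ partition" bookkeeping rigorous without hand-waving — and keeping the sign/side conventions ($\alpha_i^+$ versus $\alpha_i^-$) consistent with the rest of Section~\ref{sec:waves} — is where I would spend the most care.
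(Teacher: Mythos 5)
Your treatment of parts i)--iii) essentially coincides with the paper's argument: the paper dismisses i) as ``clear'', and for ii) and iii) it uses exactly your decomposition of $S$ into two pairs of pants (respectively an annulus and a pair of pants) together with the uniqueness of the essential arc in a pair of pants joining a given boundary component to itself. Your regluing argument for i), including the observation that ``different sides of $\delta$ for one $\alpha_i$'' forces ``different sides for both'' because an essential curve in a four-holed sphere partitions the boundary components as $2+2$, is correct and in fact more detailed than what the paper records. The only caveat is that you should explicitly exclude curves $\delta$ that are boundary-parallel in $S$ (these give a $1+3$ partition yet are non-separating on $\partial V$); this is implicit in the paper as well.

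Part iv) as you propose it does not work. You claim that $i(\delta,\delta')=2$ holds exactly when the two $2+2$ partitions determined by $\delta$ and $\delta'$ are transverse; only one implication is true. In the slope picture for $\Sigma_{0,4}$ one has $i(p/q,r/s)=2|ps-qr|$, and the partition induced by a curve is the reduction of its slope modulo $2$. Transverse partitions mean $ps-qr$ is odd, which gives $i\in\{2,6,10,\dots\}$, not $i=2$: for instance the curves of slopes $0/1$ and $3/1$ have transverse partitions but intersect in six points. Since each partition is realized by an infinite (Dehn-twist) family of curves, each with its own admissible wave, ``tracking the combinatorics of the two partitions'' cannot detect Farey adjacency of $\delta,\delta'$, nor disjointness of $w,w'$. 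The paper's argument for the nontrivial direction of iv) does not factor through partitions: assuming $i(\delta,\delta')=2$, one checks that $\delta'$ meets $w$ in exactly two points, so that $\delta'$ has a single subarc $d'$ in the pair-of-pants component $S_2$ of $S-w$; the unique essential arc in $S_2$ from $\partial_0$ to itself disjoint from $d'$ is then disjoint from all of $\delta'$, hence by the uniqueness statement in ii) it is the admissible wave $w'$ of $\delta'$, and it is disjoint from $w$ because it lies in $S_2$. You would need to replace your partition argument by a surgery argument of this kind to close the gap.
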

  \begin{figure}[h!]
    \centering
    \includegraphics[width=0.8\textwidth]{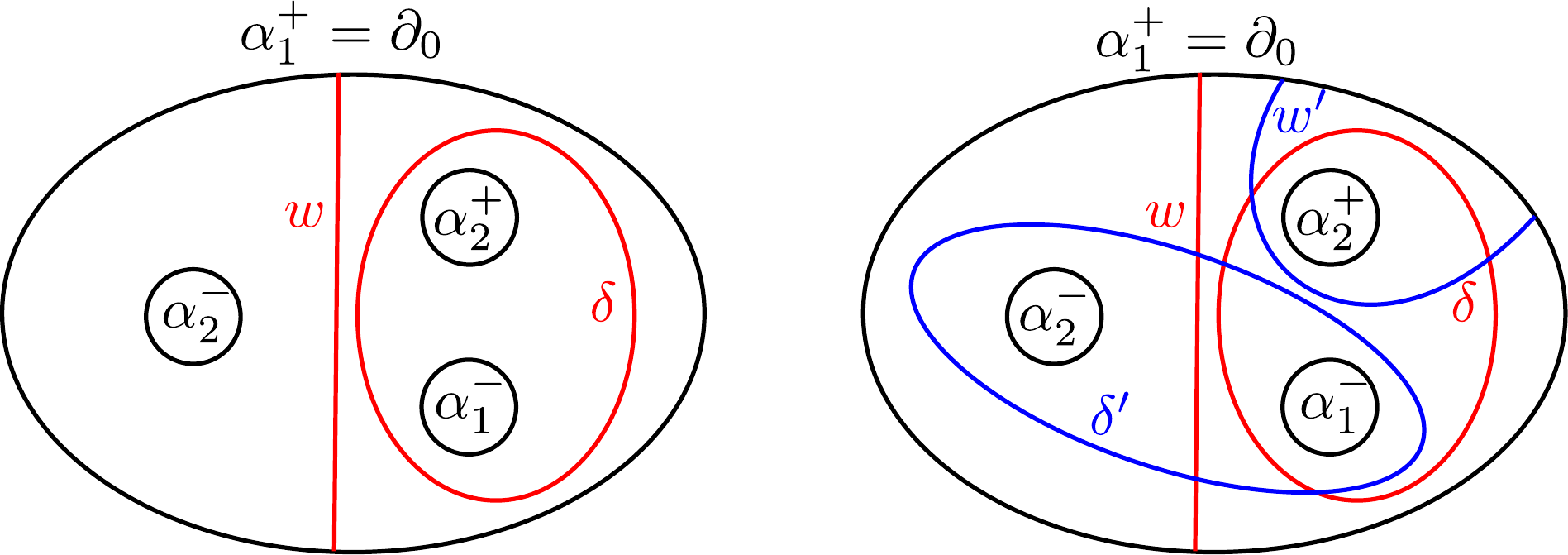}
    \caption{On the left: Passing from an admissible wave to a meridian and back as in Lemma~\ref{lem:wave-description}~ii) and~iii). On the right: Disjoint admissible waves correspond to curves intersecting twice.}
    \label{fig:wavegraphlemma}
  \end{figure}
\begin{proof}
  Assertion~i) is clear. Assertion~ii) follows because any curve
  $\delta$ as in ii) separates $S$ into two pairs of pants, and on a
  pair of pants there is a unique homotopy class of embedded arcs with
  endpoints on a specified boundary component. Assertion~iii) follows
  since such an arc cuts $S$ into an annulus and a pair of pants
  (compare the proof of Lemma~\ref{lem:g2waves-partner}). It remains
  to show the final Assertion~iv). First observe that if $w, w'$ are
  disjoint then the corresponding curves constructed in iii) indeed
  intersect in two points.

  Finally, suppose that $\delta$ is a curve defined by an arc $w$ as in iii).
  The arc $w$ separates $S$ into $S_1$ and $S_2$. Without loss of generality
  we may assume that $S_1$ is an annulus containing $\alpha_2^-$ and $S_2$ is a
  pair of pants.

  Suppose now that $\delta'$ intersects $\delta$ in two points. Then
  $\delta'$ will also intersect $w$ in two points, and hence there is
  a subarc $d' \subset \delta'$ in $S_2$ with both endpoints on
  $w$. Since $S_2$ is a pair of pants, there is a unique arc $w'$ in
  $S_2$ with endpoints on $\partial_0$ which is disjoint from $d'$, and
  therefore from $\delta'$. By the uniqueness of ii) this is the arc
  defining $\delta'$, and since it is contained in $S_1$ with endpoints
  on $\partial_0$ it is indeed disjoint from $w$.
\end{proof}
Motivated by this lemma, we make the following definition.
\begin{definition}
  The \emph{wave graph $\W(Z)$ of $Z$} is the graph whose vertices
  correspond to admissible waves $w$ based at $\partial_0$, and whose
  edges correspond to disjointness.
\end{definition}
For future reference, we record the following immediate corollary of
Lemma~\ref{lem:wave-description}:
\begin{corollary}\label{cor:wave-graph}
  The wave graph $\W(Z)$ is isomorphic to the graph whose vertices
  correspond to non-separating meridians $\delta$ which are disjoint from $Z$,
  and where vertices corresponding to $\delta, \delta'$ are connected
  by an edge if $i(\delta, \delta') = 2$.
\end{corollary}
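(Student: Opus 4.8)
The plan is to prove Corollary~\ref{cor:wave-graph} as a direct translation of Lemma~\ref{lem:wave-description}, since that lemma has already set up a dictionary between admissible waves and the relevant meridians. First I would invoke parts~ii) and~iii) of Lemma~\ref{lem:wave-description}: they say precisely that the assignment $w \mapsto \delta$ (the unique curve defining $w$) and $\delta \mapsto w$ (the unique admissible wave disjoint from $\delta$) are mutually inverse bijections between the set of admissible waves based at $\partial_0$ and the set of non-separating meridians $\delta \subset S$ disjoint from $Z$. By part~i) of the same lemma, these curves $\delta \subset S$ disjoint from $Z$ are exactly the non-separating meridians on $\partial V$ that are disjoint from $Z$, so the vertex sets of the two graphs are in canonical bijection.

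Next I would check that this bijection carries edges to edges. An edge in $\W(Z)$ joins $w$ and $w'$ exactly when $w, w'$ are disjoint. By part~iv) of Lemma~\ref{lem:wave-description}, the corresponding curves $\delta, \delta'$ satisfy $i(\delta, \delta') = 2$ if and only if $w, w'$ are disjoint. Hence the edge relation "disjointness of admissible waves" corresponds exactly to the edge relation "$i(\delta, \delta') = 2$" on the curve side. Combining the vertex bijection with this edge correspondence gives the claimed graph isomorphism, completing the proof.

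Since essentially every ingredient is already packaged in Lemma~\ref{lem:wave-description}, there is no real obstacle here; the only point requiring a word of care is the implicit claim that distinct admissible waves (up to homotopy moving endpoints in $\partial S$) give distinct curves $\delta$ and vice versa, i.e.\ that the maps in ii) and iii) are honestly inverse to one another and not merely surjections in each direction. This follows from the uniqueness clauses in ii) and iii): given $\delta$, the wave $w$ is unique, and given $w$, the curve $\delta$ is unique, so composing in either order returns the original object. Thus the corollary is immediate, and I would simply write: "This is a restatement of Lemma~\ref{lem:wave-description}: parts~i)--iii) give the bijection on vertices, and part~iv) identifies the edge relations."
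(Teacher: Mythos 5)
Your proposal is correct and matches the paper's treatment: the paper records this as an immediate consequence of Lemma~\ref{lem:wave-description}, with parts ii) and iii) supplying the mutually inverse vertex bijections and part iv) identifying disjointness of waves with $i(\delta,\delta')=2$, exactly as you argue. The only (harmless) gloss is your use of part i): it characterizes which curves in $S$ are non-separating on $\partial V$, while the meridian property is automatic for curves in $S$ since the complement of the cut disks is a ball.
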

\begin{remark}
  As a consequence of Corollary~\ref{cor:wave-graph}, the wave graph
  $\W(Z)$ can be identified with a subgraph of the Farey graph of the
  four-holed sphere $\Sigma_{0,4}$. To describe this subgraph, recall
  that every edge of the Farey graph is contained in two
  triangles. The three vertices of such a triangle correspond to the
  three different ways to separate the four punctures of
  $\Sigma_{0,4}$ into two sets. The condition for the meridian to be
  non-separating excludes one of these -- so the wave graph is obtained
  from the Farey graph of $\Sigma_{0,4}$ by removing one vertex and
  two edges from each triangle. This point of view can be used to show
  that the wave graph is a tree (Theorem~\ref{thm:wave-tree}), but we
  will prove this theorem using different techniques which are useful later.
\end{remark}
We also record the following
\begin{lemma}\label{lem:wave-connected}
  The wave graph $\W(Z)$ is connected.
\end{lemma}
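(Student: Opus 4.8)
The plan is to exploit the correspondence from Corollary~\ref{cor:wave-graph}, which identifies $\W(Z)$ with the graph whose vertices are non-separating meridians disjoint from $Z$, with edges recording intersection number exactly $2$. Given two such meridians $\delta, \delta'$, I want to produce a path between them; the natural tool is the surgery sequences of Section~\ref{sec:waves}. More precisely, I would run a surgery sequence on the cut system $Z$ in the direction of $\delta'$: by Proposition~\ref{prop:surgery-paths} this terminates in a cut system disjoint from $\delta'$, and by Corollary~\ref{cor:compatible-surgeries} the surgery sequence is in fact unique. The key observation is that a single surgery step, which replaces a boundary curve $\alpha_i$ of the current cut system by the curve $\delta$ (in the notation of Proposition~\ref{prop:surgery-paths}, $\delta = a\cup b$ or $a'\cup b$), replaces one meridian by a meridian meeting it in exactly two points — precisely the edge relation in $\W(Z)$.

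First I would set up the induction correctly. Fix $Z=\{\alpha_1,\alpha_2\}$. For a non-separating meridian $\delta$ disjoint from $Z$, I want to connect the corresponding vertex of $\W(Z)$ to some fixed base vertex. The issue is that surgery moves the cut system away from $Z$, whereas $\W(Z)$ only sees curves disjoint from the \emph{fixed} $Z$. So instead I would run the argument inside the fixed complementary surface $S = \partial V - Z$, using the admissible-wave description: vertices of $\W(Z)$ are admissible waves $w$ based at $\partial_0$. Given two admissible waves $w, w'$, let $\delta, \delta'$ be the associated meridians (Lemma~\ref{lem:wave-description}~iii)). I would induct on $i(\delta, \delta')$, which is even and positive unless $\delta$ is homotopic to $\delta'$ (in which case there is nothing to prove). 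When $i(\delta,\delta') > 2$, I claim there is a meridian $\delta''$ disjoint from $Z$, non-separating, with $i(\delta, \delta'') = 2$ and $i(\delta'', \delta') < i(\delta, \delta')$. This $\delta''$ comes from a surgery of $\delta$ in the direction of $\delta'$ \emph{performed inside $S$}: an outermost subarc of $\delta'$ cut off by $\delta$ gives a wave, and replacing $\delta$ by the surgered curve reduces intersection with $\delta'$ (Proposition~\ref{prop:surgery-paths}~iv)); one must check the surgered curve is still a non-separating meridian disjoint from $Z$ — disjointness from $Z$ holds because the whole picture happens in $S$, and non-separatedness follows from the criterion in Lemma~\ref{lem:wave-description}~i) together with the structure of the surgery (the surgered curve is a boundary component of a regular neighborhood of $\delta \cup b$, and exactly one of the two choices is non-separating, as in Proposition~\ref{prop:surgery-paths}~iii)). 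Then $\delta, \delta''$ are joined by an edge of $\W(Z)$, and by induction $\delta''$ is joined to $\delta'$ by a path, completing the argument.

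The main obstacle I anticipate is the bookkeeping in the inductive step: verifying that the surgered curve $\delta''$ genuinely satisfies all three conditions (meridian, non-separating, disjoint from $Z$) and that the intersection number with $\delta'$ strictly drops, all while staying inside the fixed surface $S$ rather than drifting to a new cut system. The cleanest route is probably to phrase everything in terms of the waves: a wave of $\delta'$ based at $\partial_0$ and disjoint from $\delta$ would be the desired admissible wave $w''$ for $\delta''$, but if the relevant outermost arc of $\delta'$ is based at a different boundary component of $S$ one must transport it to an admissible wave using the uniqueness in Lemma~\ref{lem:wave-description}~ii). Once that translation is in place, the edge relation "$i(\delta,\delta'')=2$" is exactly disjointness of the corresponding admissible waves (Lemma~\ref{lem:wave-description}~iv)), and the induction closes. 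A slicker alternative, if one is willing to use it, is simply to invoke that the wave graph embeds in the Farey graph of $\Sigma_{0,4}$ as described in the preceding remark and that the Farey graph is connected — but since the paper wants to prove the tree property by "different techniques", the surgery-sequence argument above is the more self-contained choice.
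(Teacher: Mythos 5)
Your overall strategy --- induction on intersection number via a surgery argument --- is the same as the paper's in spirit, but the concrete inductive step fails. You propose to obtain $\delta''$ by surgering the curve $\delta$ along an outermost subarc $b$ of $\delta'$, and you claim $i(\delta,\delta'')=2$. A surgered curve $a\cup b$ (equivalently, a boundary component of a regular neighbourhood of $\delta\cup b$) is by construction \emph{disjoint} from $\delta$, not a curve meeting it twice, so it is never adjacent to $\delta$ in the sense of Corollary~\ref{cor:wave-graph}. Worse, this disjointness is fatal in the four-holed sphere $S$: the arc $b$ lies in one of the two pairs of pants of $S-\delta$ and joins the boundary component $\delta$ to itself, so the two candidate curves $a\cup b$ and $a'\cup b$ are isotopic to the two remaining boundary components of that pair of pants, i.e.\ to components of $\partial S$ (sides of the curves of $Z$). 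Hence the surgery produces no new vertex of $\W(Z)$ at all, and the induction cannot get started. This reflects the general fact that in $\Sigma_{0,4}$ any essential non-peripheral curve disjoint from $\delta$ is isotopic to $\delta$: ``Farey neighbours'' intersect twice and cannot be reached from $\delta$ by a disjointness-producing surgery on closed curves.

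The paper's proof avoids this by surgering the admissible waves themselves rather than the meridians: given waves $w,w'$, it takes the initial segment $w_0\subset w$ from $\partial_0$ up to the first intersection point $q$ with $w'$, and forms the two arcs $w_0\cup w'_1$, $w_0\cup w'_2$, where $w'_1,w'_2$ are the components of $w'\setminus\{q\}$. Exactly one of these is admissible; it is disjoint from $w'$ (hence adjacent to it in $\W(Z)$) and meets $w$ in strictly fewer points, so induction on $i(w,\cdot)$ closes. Arcs based at $\partial_0$ have exactly the room for this hybrid construction that closed curves in $\Sigma_{0,4}$ lack. Your fallback remark about ``phrasing everything in terms of the waves'' points in this direction, but it conflates subarcs of $\delta'$ (which never meet $\partial_0$) with the admissible wave defining $\delta'$ (an arc disjoint from $\delta'$ with both endpoints on $\partial_0$), so as written it does not supply the missing step. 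If you insist on a curve-level argument, the honest route is to invoke connectivity of the Farey graph of $\Sigma_{0,4}$ directly, as in the remark preceding the lemma, rather than an outermost-arc surgery.
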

\begin{proof}
  This is a standard surgery argument, using induction on intersection
  number. Suppose $w, w'$ are any two admissible waves corresponding
  to vertices in $\W(Z)$. If they are not disjoint, consider an
  initial segment $w_0 \subset w$ which intersects $w'$ only in its
  endpoint $\{q\} = w_0 \cap w'$. Let $w'_1, w'_2$ be the two
  components of $w'\setminus\{q\}$.  Then both $x_i=w_0 \cup w'_i$ are
  arcs which are disjoint from $w'$ and have smaller intersection with
  $w$. It is easy to see that exactly one of them is admissible
  (compare also Figure~\ref{fig:wave-projection}), and hence $w'$ is
  connected to a vertex $x_i \in \W(Z)$ which corresponds to an arc of
  strictly smaller intersection number with $w$. By induction, the
  lemma follows.
\end{proof}
In order to study $\W(Z)$ further, we define the following projection
maps. 

First, given a vertex $w \in \W(Z)$ corresponding to an admissible
wave (which we denote by the same symbol), we define the set
$\A(w)$ to consist of embedded arcs $a$
in $S-w$ with one endpoint on $w$ and the second endpoint on $\partial_0$,
which are not homotopic into $w \cup \partial_0$ (up to homotopy of
such arcs).

We observe that any arc $a$ corresponding to a vertex in $\A(w)$ cuts
the pair of pants $S-w$ into two annuli. From this, we obtain the
following consequence (see also Figure~\ref{fig:wave-projection}):
\begin{lemma}\label{lem:waveprojection-unique}
  \begin{enumerate}[i)]
  \item No two  arcs $a, a'$ corresponding to different vertices in $\A(w)$
    are disjoint.
  \item Given an arc $a$ corresponding to a vertex in $\A(w)$ there is a
    unique admissible wave $w'$ which is disjoint from $w$ and from $a$.
  \item No two distinct admissible waves are disjoint from $w$.
  \end{enumerate}
\end{lemma}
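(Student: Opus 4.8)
The plan is to prove all three assertions by working inside the pair of pants $S - w$, exploiting the fact that arcs in a pair of pants have essentially unique isotopy classes once their endpoint boundary components are specified. Recall the setup: $w$ is an admissible wave based at $\partial_0$, so it cuts $S$ into two pieces; since $w$ is admissible it separates the two boundary components on the ``far'' side, which forces $S - w$ to be an annulus and a pair of pants. We actually only need the pair of pants piece here, except that $S - w$ as a surface-with-the-two-copies-of-$w$-on-its-boundary is itself a single pair of pants $P$ (with boundary the doubled arc $w \cup w$, forming one boundary circle together with an arc of $\partial_0$, plus the two original boundary curves $\alpha_i$ adjacent to $\partial_0$). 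The key structural fact I would invoke is that $P$ is a pair of pants, and that every arc in $\A(w)$ runs from (a point of) the boundary circle containing $w$ to the boundary circle $\partial_0$ — but wait, in $P$ these are typically the \emph{same} boundary circle. So each $a \in \A(w)$ is an arc with both endpoints on one fixed boundary component of the pair of pants $P$, and (by the essentiality condition) it is \emph{not} boundary-parallel. As noted already in the excerpt, such an $a$ cuts $P$ into two annuli.

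For assertion i): if $a, a'$ are two arcs in $\A(w)$ that are disjoint, then since $a$ cuts $P$ into two annuli $P = A_1 \cup A_2$, the arc $a'$ lies entirely in one of them, say $A_1$; but $a'$ has both endpoints on the same boundary component of $P$, and an essential such arc cannot be contained in an annulus (every essential arc in an annulus connects the two distinct boundary circles, and an arc with both endpoints on one circle of the annulus is boundary-parallel in $A_1$, hence boundary-parallel in $P$ or isotopic to $a$). Chasing which boundary circle of $A_1$ the endpoints of $a'$ lie on, and in all cases concluding that $a'$ is either homotopic into $w \cup \partial_0$ or homotopic to $a$, contradicts the hypothesis that $a, a'$ are distinct vertices of $\A(w)$. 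This is the first step, and it is essentially the observation recorded just before the lemma statement, made precise.

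For assertion ii): given $a \in \A(w)$, an admissible wave $w'$ disjoint from $w$ is in particular an embedded arc in $P$ with both endpoints on $\partial_0$; disjointness from $a$ means $w'$ lies in one of the two annuli $A_1, A_2$ that $a$ cuts $P$ into. I would identify which annulus: exactly one of $A_1, A_2$ contains a subarc of $\partial_0$ joining the two endpoints of $a$ on one side — more carefully, $\partial_0$ sits on the boundary of $P$, and $a$ subdivides the $\partial_0$-part of $\partial P$; the annulus that has a genuine subarc of $\partial_0$ available on which $w'$ can land both endpoints (essentially) is unique, and in that annulus there is a unique isotopy class of essential arc with both endpoints on the $\partial_0$-arc. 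One then checks this arc is admissible (it separates the correct pair of far-side boundary components — this follows because $w'$ is disjoint from $w$, hence $w'$ lives on the pair-of-pants side, and disjointness from $a$ forces it onto the correct annular piece, from which the separation property can be read off as in the proof of Lemma~\ref{lem:g2waves-partner}). Uniqueness is immediate from the uniqueness of the essential arc class in the annulus.

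For assertion iii): this is really a corollary of ii) combined with Lemma~\ref{lem:g2waves-partner}. Indeed, by Corollary~\ref{cor:wave-graph}, admissible waves correspond to non-separating meridians disjoint from $Z$, and $w$ disjoint from $w'$ corresponds to meridians intersecting exactly twice; Lemma~\ref{lem:g2waves-partner}~iii) (applied with the cut system obtained by completing appropriately, or directly from the pair-of-pants picture) says the ``partner'' arc of a given wave is unique. Alternatively, and more self-containedly: if $w', w''$ are both admissible waves disjoint from $w$, then both live in the pair-of-pants side $P$, both have endpoints on $\partial_0$, and in a pair of pants there is a unique isotopy class of essential arc with both endpoints on a specified boundary component — but $w'$ need not be essential in $P$, only in $S$; however $w'$ being an \emph{admissible} wave precisely means it is essential in $S$, and since it is disjoint from $w$ it is non-peripheral in $P$ as well (it cannot be parallel to the $w$-boundary-circle of $P$ without being inessential in $S$, by the separation condition). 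So $w', w''$ are both the unique essential arc class in $P$ with endpoints on $\partial_0$, hence homotopic in $S$.

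The main obstacle I expect is purely bookkeeping: keeping straight, in each of the two annular pieces $A_i$ produced by cutting the pair of pants $P = S - w$ along $a$, which boundary arcs of $\partial P$ (portions of $w$, portions of $\partial_0$, and the curves $\alpha_1, \alpha_2$) end up on which circle of $A_i$. All the topological content is the trivial classification of arcs in an annulus and a pair of pants; the only way to go wrong is to mislabel the boundary components, so I would carry a single fixed picture (Figure~\ref{fig:wave-projection} as referenced) throughout and phrase every claim relative to it.
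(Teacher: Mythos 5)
Your argument for assertion i) is essentially the paper's: cut along $a$, observe the complementary pieces are annuli, and conclude that a disjoint $a'$ is boundary-parallel, hence either inessential or homotopic to $a$. For ii) you reach the right conclusion, but two details are off. First, $S-w$ is not a single connected pair of pants: $w$ separates the planar surface $S$, so $S-w$ is an annulus (around one of the far-side boundary components) together with a pair of pants, and the arcs of $\A(w)$ live in the pair-of-pants piece only because any arc in the annulus piece retracts into $w\cup\partial_0$. Second, \emph{both} annuli obtained by cutting the pair-of-pants piece along $a$ carry a sub-arc of $\partial_0$ in their boundary, so each contains a candidate arc; the correct dichotomy is that exactly one of the two candidates is admissible (the other cuts off an annulus around the wrong boundary component), not that only one annulus admits such an arc. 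These are repairable.

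The genuine gap is in iii). The statement you prove -- that there is at most one admissible wave disjoint from $w$ -- is false, and it is not what iii) asserts. As the proof of Theorem~\ref{thm:wave-tree} makes explicit, iii) means that no two admissible waves which are disjoint \emph{from each other} are both disjoint from $w$; equivalently, the link of $w$ in $\W(Z)$ spans no edge. There are in fact infinitely many admissible waves disjoint from $w$: if $\delta$ is the meridian corresponding to $w$ via Lemma~\ref{lem:wave-description} and $w'$ is one admissible wave disjoint from $w$, then the arcs $T_\delta^n(w')$ are pairwise distinct and all disjoint from $w=T_\delta^n(w)$; this infinite valence is also visible in the Farey-graph description of $\W(Z)$ given in the Remark. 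The step that makes your argument appear to work -- ``in a pair of pants there is a unique isotopy class of essential arc with both endpoints on a specified boundary component'' -- does not apply here, because the endpoints of an admissible wave disjoint from $w$ are confined to the sub-arc of $\partial_0$ lying in the boundary circle of the pair-of-pants piece (the rest of that circle is a copy of $w$), and with endpoints confined to a sub-arc there are infinitely many classes, differing by twisting around that boundary circle. The correct argument, and the one the paper gives, starts from a pair $w'$, $w''$ of admissible waves each disjoint from $w$ \emph{and from each other}, and uses that $S-(w\cup w')$ is a union of annuli (as in Lemma~\ref{lem:g2waves-partner}) to conclude that $w''$ is homotopic to $w$ or to $w'$, or else is inadmissible.
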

\begin{proof}
  If $a'$ is an arc with endpoint on $w$ and $\partial_0$, which is
  disjoint from $a$, then (since both components of $S-(w\cup a)$ are
  annuli) it is either homotopic into $\partial_0 \cup w$, or
  homotopic to $a$. This shows i). To see the second
  claim, note that there are two homotopy classes of arcs in $S$ with
  endpoints in $\partial_0$ disjoint from $w$. Exactly one of them is
  an admissible wave, showing ii). For the third claim, consider
  an admissible wave $w'$ disjoint from $w$.  Since it is distinct
  from $w$, it separates the two boundary components of $S$ which
  are contained in $S-w$. If $w''$ is now any other arc with endpoints
  on $\partial_0$ which is disjoint from $w$ and $w'$, then it is either
  homotopic to one of them, or not admissible, showing iii).
\end{proof}
Given a vertex $w \in \W(Z)$, we now define a map
\[ \pi_w: \W(Z) \to \A(w) \]
in the following way:
\begin{enumerate}[i)]
\item If $x \in \W(Z)$ is not disjoint from $w$, consider an initial
  segment $x_0 \subset x$ one of whose endpoints is on $\partial_0$,
  the other is on $w$, and so that its interior is disjoint from $w$.
  We then put $\pi_w(x) = x_0$ (see Figure~\ref{fig:wave-projection}).
  This is well-defined by Lemma~\ref{lem:waveprojection-unique}~i).
\item If $y$ is disjoint from $w$, we let $\pi_w(y)$ be the an 
  arc in $\A(w)$, which is disjoint from $y$.
  This is well-defined by Lemma~\ref{lem:waveprojection-unique}~ii).
\end{enumerate}
\begin{figure}[h!]
  \centering
  \includegraphics[width=0.4\textwidth]{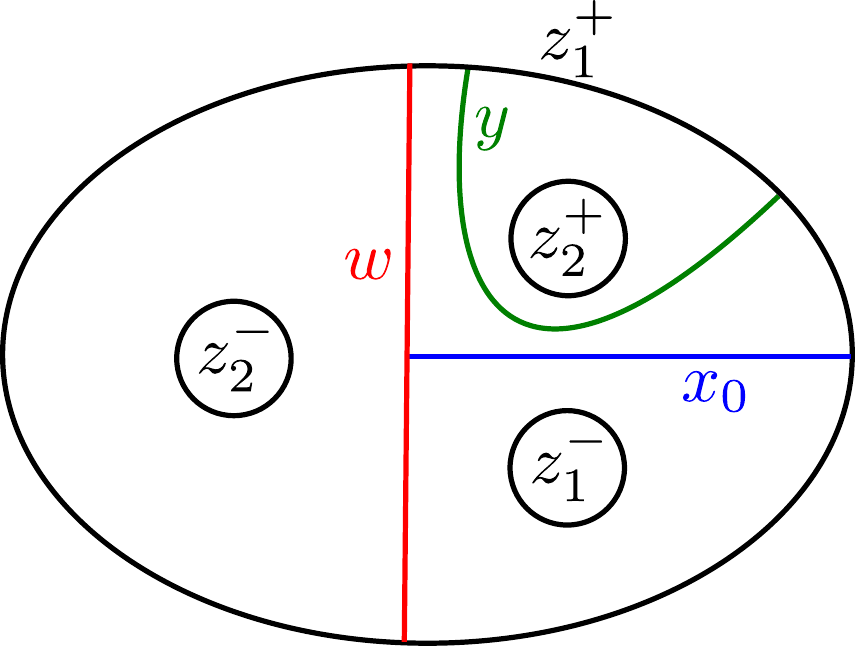}
  \caption{Projecting a wave into $\A(w)$. An arbitrary wave $x$
    intersecting $w$ has an initial segment $x_0$ joining $\partial_0$
    to $w$, which is its projection. For a disjoint wave $y$ there is
    a unique arc as above disjoint from it.}
  \label{fig:wave-projection}
\end{figure}
We then get the following consequences.
\begin{proposition}\label{prop:wave-outside-link}
  If $x,y \in W(Z)$ are both not disjoint from $w$, but $x,y$ are
  disjoint from each other, then $\pi_w(x) = \pi_w(y)$.
\end{proposition}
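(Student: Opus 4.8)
The plan is to reduce the statement directly to the uniqueness property recorded in Lemma~\ref{lem:waveprojection-unique}~i). First I would recall explicitly what the two projections are. Since $x$ intersects $w$ essentially, $\pi_w(x)$ is by definition represented by an initial segment $x_0\subset x$ running from an endpoint of $x$ on $\partial_0$ to its first intersection point with $w$, with interior disjoint from $w$; this $x_0$ represents a vertex of $\A(w)$, i.e. it is an essential arc with one endpoint on $w$ and one on $\partial_0$ not homotopic into $w\cup\partial_0$. (This essentiality, together with independence of the choice of endpoint of $x$, is exactly what the well-definedness of $\pi_w$ asserts, via Lemma~\ref{lem:waveprojection-unique}~i).) Likewise $\pi_w(y)$ is represented by a segment $y_0\subset y$ of the same form.

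The argument itself is then immediate. I would fix representatives of $x$ and $y$ that are in minimal position; since $x$ and $y$ are disjoint as vertices of $\W(Z)$, these representatives are literally disjoint embedded arcs in $S$. The projections $x_0\subset x$ and $y_0\subset y$ are subarcs of these disjoint arcs, hence $x_0\cap y_0=\emptyset$. Both $x_0$ and $y_0$ represent vertices of $\A(w)$, so by Lemma~\ref{lem:waveprojection-unique}~i) — no two distinct vertices of $\A(w)$ are represented by disjoint arcs — they represent the same vertex of $\A(w)$. That is precisely the equality $\pi_w(x)=\pi_w(y)$.

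There is essentially no real obstacle here; the proposition is a bookkeeping consequence of Lemma~\ref{lem:waveprojection-unique}, and is stated separately only because it is used later. The one point deserving a sentence of care is confirming that the initial segments $x_0,y_0$ are genuinely essential arcs (vertices of $\A(w)$) rather than arcs homotopic into $w\cup\partial_0$: this is built into the definition of $\pi_w$, where one always selects the endpoint of $x$ (resp. of $y$) for which the initial segment is essential, and it does not affect the disjointness argument, since $x_0$ and $y_0$ remain subarcs of the disjoint arcs $x$ and $y$ no matter which endpoints are used. Figure~\ref{fig:wave-projection} makes the configuration transparent.
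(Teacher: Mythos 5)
Your proof is correct and is essentially the paper's own argument: the paper's proof of this proposition is the single sentence ``This is immediate from Lemma~\ref{lem:waveprojection-unique}~i)'', and your write-up simply spells out the same reduction (disjoint representatives of $x$ and $y$ give disjoint subarcs $x_0, y_0$ representing vertices of $\A(w)$, which must therefore coincide).
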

\begin{proof}
  This is immediate from Lemma~\ref{lem:waveprojection-unique}~i).
\end{proof}
\begin{proposition}\label{prop:wave-entering-link}
  If $x\in \W(Z)$ is not disjoint from $w$, and $y\in \W(Z)$ is
  disjoint from $x$ and from $w$, then $\pi_w(x) = \pi_w(y)$.
\end{proposition}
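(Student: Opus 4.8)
The plan is to deduce this from the uniqueness statements in Lemma~\ref{lem:waveprojection-unique}, in the same spirit as the proof of Proposition~\ref{prop:wave-outside-link}. The key observation is that $\pi_w(x)$ is by definition represented by an \emph{initial sub-arc} of $x$, and since $x$ is disjoint from $y$ this sub-arc is automatically disjoint from $y$ as well; but $\pi_w(y)$ is characterized as the \emph{unique} element of $\A(w)$ disjoint from $y$, so the two projections must coincide.

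Concretely, I would first fix realizations. Since $w$ and $y$ are disjoint as homotopy classes of arcs, choose disjoint representatives of $w$ and $y$, and then put $x$ in minimal position with respect to the arc system $w\cup y$; because $x$ and $y$ are also disjoint as homotopy classes, this can be arranged so that $x$ is disjoint from $y$ while meeting $w$ minimally (and non-trivially, as $x$ is not disjoint from $w$). Let $x_0\subset x$ be the initial segment running from an endpoint of $x$ on $\partial_0$ to the first point of $x\cap w$, with interior disjoint from $w$. By the definition of $\pi_w$ (case i), the class of $x_0$ is precisely $\pi_w(x)\in\A(w)$; in particular $x_0$ is an essential arc with one endpoint on $w$ and one on $\partial_0$. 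Since $x_0$ is a sub-arc of $x$, and $x$ is disjoint from $y$, the arc $x_0$ is disjoint from $y$. Finally, recall that $\pi_w(y)$ is defined (case ii) to be the unique element of $\A(w)$ disjoint from $y$, well-definedness being Lemma~\ref{lem:waveprojection-unique}~ii). As $x_0$ represents an element of $\A(w)$ disjoint from $y$, this uniqueness forces $\pi_w(y)=[x_0]=\pi_w(x)$, as desired.

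The only point requiring genuine care — and the one I expect to be the main obstacle — is the opening step: arranging a single configuration in which $x$ is simultaneously disjoint from $y$ and in minimal position with $w$, and then verifying that the resulting initial segment $x_0$ is really essential (a vertex of $\A(w)$) rather than homotopic into $\partial_0\cup w$. The latter follows from minimality of $x\cap w$, exactly as is implicitly used when $\pi_w$ is defined, so no deeper difficulty should arise beyond this bookkeeping.
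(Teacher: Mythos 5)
Your proof is correct and is exactly the argument the paper intends: the paper's proof is the one-line remark that the claim is immediate from Lemma~\ref{lem:waveprojection-unique}~ii), and your write-up simply spells out why --- the initial segment $x_0$ representing $\pi_w(x)$ is a member of $\A(w)$ disjoint from $y$, so the uniqueness underlying case~ii) of the definition of $\pi_w$ forces it to equal $\pi_w(y)$. The care you take with putting $x$, $y$, $w$ in simultaneous minimal position and checking essentiality of $x_0$ is a harmless elaboration of what the paper leaves implicit.
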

\begin{proof}
  This is immediate from Lemma~\ref{lem:waveprojection-unique}~ii).
\end{proof}
We are now ready to prove the following central result. 
\begin{theorem}\label{thm:wave-tree}
  For any cut system $Z$, the wave graph $\W(Z)$ is a tree.
\end{theorem}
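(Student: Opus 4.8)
The strategy is to show that $\W(Z)$ is connected and simply connected, since Lemma~\ref{lem:wave-connected} already gives connectivity; what remains is to rule out embedded cycles. First I would argue by contradiction: suppose $\gamma = (w_0, w_1, \dots, w_k = w_0)$ is an embedded edge cycle in $\W(Z)$ of minimal length $k \geq 3$. The plan is to derive a contradiction by projecting this cycle into a link $\A(w)$ via the map $\pi_w$, using the projection estimates in Propositions~\ref{prop:wave-outside-link} and~\ref{prop:wave-entering-link}.

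\textbf{Key steps.} Fix one vertex of the cycle, say $w = w_0$. By Lemma~\ref{lem:waveprojection-unique}~iii), no two distinct vertices of $\W(Z)$ are both disjoint from $w$; in particular among $w_1, \dots, w_{k-1}$ at most one — namely $w_1$ and $w_{k-1}$, which must then coincide, forcing $k = 2$, a contradiction for $k \geq 3$ — so \emph{no} interior vertex $w_2, \dots, w_{k-2}$ is disjoint from $w$ (and in a minimal cycle $w_1 \neq w_{k-1}$, so actually only $w_1$ and $w_{k-1}$ are adjacent to $w$ and distinct). Now I would show that $\pi_w$ is constant along the portion of the cycle avoiding $w$: for consecutive interior vertices $w_i, w_{i+1}$ (both not disjoint from $w$, and disjoint from each other), Proposition~\ref{prop:wave-outside-link} gives $\pi_w(w_i) = \pi_w(w_{i+1})$. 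At the two ends, $w_1$ is adjacent to both $w$ and $w_2$; if $w_1$ is disjoint from $w$ then by Proposition~\ref{prop:wave-entering-link} (applied with $x = w_2$, $y = w_1$) we get $\pi_w(w_2) = \pi_w(w_1) = \pi_w(w)$, and symmetrically $\pi_w(w_{k-1}) = \pi_w(w)$; if instead $w_1$ is \emph{not} disjoint from $w$, then by definition $\pi_w(w_1)$ is an initial segment $x_0$ of $w_1$ landing on $w$, and one checks directly (again via Lemma~\ref{lem:waveprojection-unique}~i)) that this equals $\pi_w(w_2)$. Either way, all of $w_1, \dots, w_{k-1}$ have the same image $a \in \A(w)$ under $\pi_w$.

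\textbf{Closing the argument.} Having shown $\pi_w(w_i) = a$ for all $i = 1, \dots, k-1$, I would now look locally near the edge $\{w_{k-1}, w_0\} = \{w_{k-1}, w\}$ and the edge $\{w, w_1\}$: both $w_1$ and $w_{k-1}$ are distinct admissible waves each adjacent to $w$, which by Lemma~\ref{lem:waveprojection-unique}~iii) is already impossible — there is at most one admissible wave disjoint from $w$. This gives the contradiction, unless $w_1 = w_{k-1}$, which in a minimal embedded cycle of length $\geq 3$ cannot happen. Hence no embedded cycle exists, and combined with connectedness (Lemma~\ref{lem:wave-connected}), $\W(Z)$ is a tree.

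\textbf{Main obstacle.} The delicate point is the bookkeeping at the two ``endpoints'' of the cycle where it meets $w$: one must carefully separate the cases according to whether the neighbors $w_1, w_{k-1}$ of $w$ are disjoint from $w$ or not, and invoke the correct one of Lemma~\ref{lem:waveprojection-unique}~ii)/iii) and Propositions~\ref{prop:wave-outside-link}/\ref{prop:wave-entering-link}. In fact the cleanest route may be to observe that Lemma~\ref{lem:waveprojection-unique}~iii) already says the link of any vertex $w$ in $\W(Z)$ contains at most one vertex — so $\W(Z)$ has no vertex of degree $\geq 2$ other than through... no: one must be careful, since a vertex can have many \emph{arcs} in $\A(w)$ but the waves disjoint from $w$ are unique. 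I expect the actual proof in the paper leverages precisely this — that $\pi_w$ together with the uniqueness statements forces the ``one step out, one step back'' structure that makes cycles collapse — so the real work is assembling Lemma~\ref{lem:waveprojection-unique} and the two projection propositions into a clean contradiction, rather than any new geometric input.
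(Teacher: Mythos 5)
Your overall strategy is the paper's: fix a vertex $w_0$ of a putative minimal cycle, show via Propositions~\ref{prop:wave-outside-link} and~\ref{prop:wave-entering-link} that $\pi_{w_0}$ is constant, equal to some $a\in\A(w_0)$, on $w_1,\dots,w_{k-1}$, and derive a contradiction at the two neighbours of $w_0$. The computation of the constant projection is essentially right (modulo the remark that your case ``$w_1$ is not disjoint from $w$'' is vacuous: $w_1$ is joined to $w_0$ by a cycle edge, hence disjoint from it by definition of $\W(Z)$). But your closing contradiction rests on a misreading of Lemma~\ref{lem:waveprojection-unique}~iii). That statement does \emph{not} say that at most one admissible wave is disjoint from $w$; it says that no two distinct admissible waves disjoint from $w$ are disjoint \emph{from each other}, i.e.\ the link of $w$ in $\W(Z)$ spans no edges. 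The link of $w$ is in fact infinite: by part~ii) every arc $a\in\A(w)$ determines a distinct admissible wave disjoint from $w$ (equivalently, under Corollary~\ref{cor:wave-graph}, the $T_\delta$-orbit of a single neighbour of a meridian $\delta$ already gives infinitely many neighbours). So the mere existence of two distinct neighbours $w_1\neq w_{k-1}$ of $w_0$ is not a contradiction --- if it were, $\W(Z)$ would be a disjoint union of isolated edges and vertices, contradicting Lemma~\ref{lem:wave-connected}. What the correct reading of iii) buys you is only the exclusion of triangles.

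The fix is the step you computed but then did not use: since $w_1$ and $w_{k-1}$ are both disjoint from $w_0$ and satisfy $\pi_{w_0}(w_1)=\pi_{w_0}(w_{k-1})=a$, the uniqueness in Lemma~\ref{lem:waveprojection-unique}~ii) (there is exactly one admissible wave disjoint from both $w_0$ and $a$) forces $w_1=w_{k-1}$, contradicting embeddedness. That is exactly how the paper closes the argument. One further point needs care: to run the induction with Proposition~\ref{prop:wave-outside-link} you must know that none of the interior vertices $w_2,\dots,w_{k-2}$ is disjoint from $w_0$; this does not follow from iii) (correctly read), but it does follow from minimality of the cycle (a chord from $w_0$ to an interior vertex would yield a shorter nontrivial cycle, and length-$3$ cycles are already excluded), which is what the paper arranges by passing to a sub-cycle in which $w_1$ and $w_n$ are the only vertices in the link of $w_0$.
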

\begin{proof}
  We have already shown that $\W(Z)$ is connected
  (Lemma~\ref{lem:wave-connected}), and so it suffices to show that
  there are no embedded cycles. As a first step, note that by
  Lemma~\ref{lem:waveprojection-unique} no two vertices in the link of $w$
  are joined by an edge. Namely, if $w_1, w_2$ are in the link of $w$ (i.e.
  disjoint from $w$), Lemma~\ref{lem:waveprojection-unique}~iii) states
  that they cannot be disjoint from each other. This in particular implies
  that there are no cycles of length $\leq 3$.
 
  Hence, suppose that $w_0,w_1 \ldots, w_n$ is a cycle of length
  $n\geq 4$. By possibly passing to a sub-cycle, we may assume that
  \begin{enumerate}
  \item $w_1, w_n$ are the only vertices in the link of $w_0$ (i.e. the only
    arcs disjoint from $w_0$).
  \item The arcs $w_1, w_n$ are distinct.
  \end{enumerate}
  Consider now $\pi_{w_0}(w_1) = a \in \A(w_0)$. Applying
  Proposition~\ref{prop:wave-entering-link} (for $x=w_2, y=w_1$) we
  obtain that $\pi_{w_0}(w_2) = a$ as well. Inductively applying
  Proposition~\ref{prop:wave-outside-link}, we obtain that
  $\pi_{w_0}(w_i) = a$ for any $i=1, \ldots, n-1$. Finally, applying
  Proposition~\ref{prop:wave-entering-link} again (for $x=w_{n-1}, y=w_n$) we
  see that $\pi_{w_0}(w_n) = a = \pi_{w_0}(w_1)$.

  However, $w_n$ and $w_1$ are assumed to be distinct, and therefore cannot
  have the same projection by Lemma~\ref{lem:waveprojection-unique}~ii).
\end{proof}

\subsection{The non-separating meridional pants graph (is a tree)}
\label{sec:cutpants}
We now come to the central object we will use to study $\Han_2$.
\begin{definition}
  \begin{enumerate}[i)]
  \item The \emph{non-separating meridional pants graph} $\CP_2$ has vertices corresponding
    to pants decompositions $X=\{ \delta_1, \delta_2, \delta_3 \}$ so
    that all $\delta_i$ are non-separating meridians. We put an edge
    between $X$ and $X'$ if they intersect minimally, i.e. in two
    points.
  \item For any cut system $Z$, let $\CP_2(Z)$ be the full subgraph
    corresponding to all those pants decompositions
    $X=\{ \delta_1, \delta_2, \delta_3 \}$ which contain $Z$.
  \end{enumerate}
\end{definition}
From Corollary~\ref{cor:wave-graph} and Theorem~\ref{thm:wave-tree}
we immediately obtain
\begin{corollary}\label{cor:cpz-tree}
  For any cut system $Z$ the subgraph $\CP_2(Z)$ is a tree.
  Any two such subtrees intersect in at most a single point.
\end{corollary}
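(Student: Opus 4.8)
The plan is to identify $\CP_2(Z)$, as a graph, with the wave graph $\W(Z)$ and then to quote Theorem~\ref{thm:wave-tree}.

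First I would match up the vertices. A vertex of $\CP_2(Z)$ is a pants decomposition $X=\{\delta_1,\delta_2,\delta_3\}$ containing $Z=\{\alpha_1,\alpha_2\}$; writing $X=\{\alpha_1,\alpha_2,\delta\}$, the decomposition $X$ is completely determined by its third curve $\delta$, which is a non-separating meridian disjoint from $Z$ (and essential in $S=\partial V - Z$, since it is not isotopic to $\alpha_1$ or $\alpha_2$). Conversely, any non-separating meridian $\delta$ disjoint from $Z$ completes $Z$ to such a pants decomposition, because three disjoint, pairwise non-isotopic essential simple closed curves on $\partial V$ always form a pants decomposition (as $3 = 3g-3$ for $g=2$). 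Thus the vertex set of $\CP_2(Z)$ is exactly the one appearing in Corollary~\ref{cor:wave-graph}.

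Next I would match the edges. If $X=\{\alpha_1,\alpha_2,\delta\}$ and $X'=\{\alpha_1,\alpha_2,\delta'\}$ are two vertices of $\CP_2(Z)$, then all of $\alpha_1,\alpha_2,\delta$ are disjoint from $\delta'$, so the only possibly nonzero contribution to $i(X,X')$ comes from the pair $\delta,\delta'$; hence $i(X,X')=i(\delta,\delta')$. On the four-holed sphere $S$, any two distinct essential simple closed curves intersect, and their geometric intersection number is even (they are vertices of the Farey graph of $\Sigma_{0,4}$), so the minimal value $i(X,X')=2$ defining an edge of $\CP_2$ is equivalent to $i(\delta,\delta')=2$, which is precisely the edge relation in Corollary~\ref{cor:wave-graph}. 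Since $\CP_2(Z)$ is a full subgraph of $\CP_2$, this establishes a graph isomorphism $\CP_2(Z)\cong\W(Z)$, and Theorem~\ref{thm:wave-tree} then gives that $\CP_2(Z)$ is a tree.

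For the last assertion I would argue as follows. Let $Z\neq Z'$ be two distinct cut systems; a common vertex of $\CP_2(Z)$ and $\CP_2(Z')$ is a pants decomposition $X$ containing $Z\cup Z'$. Two distinct cut systems share at most one curve, so $Z\cup Z'$ consists of three or four curves; as a pants decomposition of a genus two surface has exactly three curves, the four-curve case cannot occur, and in the three-curve case $X=Z\cup Z'$ is forced. In either case there is at most one common vertex, so the two trees meet in at most a point. The only mildly delicate step is the edge bookkeeping — checking that the minimal-intersection condition for the full triples literally reduces to $i(\delta,\delta')=2$ — but this follows at once from the disjointness of the shared curves together with the parity of intersection numbers on $\Sigma_{0,4}$.
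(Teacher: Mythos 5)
Your proposal is correct and follows exactly the route the paper intends: identify $\CP_2(Z)$ with $\W(Z)$ via Corollary~\ref{cor:wave-graph} and invoke Theorem~\ref{thm:wave-tree}, with the second assertion reduced to the observation that a common vertex of $\CP_2(Z)$ and $\CP_2(Z')$ would have to contain $Z\cup Z'$. You have merely written out the vertex/edge bookkeeping and the intersection argument that the paper leaves implicit.
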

We will use these subtrees in order to study $\CP_2$. We begin with the
following.
\begin{lemma}\label{lem:cp2-connected}
  The graph $\CP_2$ is connected.
\end{lemma}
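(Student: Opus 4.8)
The plan is to prove connectivity of $\CP_2$ by the standard surgery-sequence technique, using the surgery paths from Proposition~\ref{prop:surgery-paths} together with the fact (Corollary~\ref{cor:cpz-tree}) that within any fixed cut system the associated pants decompositions form a connected subgraph $\CP_2(Z)$. Concretely, I would fix two vertices $X = \{\delta_1,\delta_2,\delta_3\}$ and $X' = \{\delta_1',\delta_2',\delta_3'\}$ of $\CP_2$ and show they can be joined by an edge-path.

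The key reduction is to argue that it suffices to connect two cut systems: any vertex $X$ of $\CP_2$ contains a cut system $Z$ (pick two of its three curves whose complement is connected — such a pair exists since $X$ is a pants decomposition by non-separating meridians, and at least one pair must be non-separating jointly), and $X$ lies in the subtree $\CP_2(Z)$, which is connected by Corollary~\ref{cor:cpz-tree}. So the problem is reduced to: given two cut systems $Z, Z'$, find a path in $\CP_2$ from some vertex containing $Z$ to some vertex containing $Z'$. First I would handle the base case where $Z$ and $Z'$ are disjoint: then $Z \cup Z'$ fills out (or can be completed within) a small number of pants decompositions, and one checks directly that $\CP_2(Z)$ and $\CP_2(Z')$ either share a vertex or are joined by a single edge — here one uses that an edge of $\CP_2$ is an intersection-two move, and a disjoint pair of cut systems can be interpolated by adding one curve at a time. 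For the general case, I would induct on $i(Z, Z')$ (the total intersection number). If this is positive, apply Proposition~\ref{prop:surgery-paths} with $(\alpha_1,\alpha_2) = Z$ and $\beta$ a curve of $Z'$ meeting $Z$: this produces a wave and a surgered cut system $Z_1$ with $i(Z_1, \beta) < i(Z,\beta)$, and crucially $Z_1$ differs from $Z$ by an intersection-two move, so $Z$ and $Z_1$ lie in a common vertex of $\CP_2$ (complete $Z \cup Z_1$ to a pants decomposition of non-separating meridians) — hence are connected in $\CP_2$. Iterating the surgery sequence in the direction of the curves of $Z'$ one at a time, we reach a cut system disjoint from $Z'$ and apply the base case.

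The main obstacle I anticipate is the bookkeeping in the inductive step: when I surger $Z$ in the direction of one curve $\delta_i'$ of $Z'$, I need the resulting cut systems to remain "close" to the remaining curves of $Z'$ in a controlled way, i.e. that performing surgeries toward $\delta_1'$ does not blow up intersection with $\delta_2', \delta_3'$ uncontrollably — or, more cleanly, I should surger toward the full multimeridian $\delta_1' \cup \delta_2' \cup \delta_3'$ at once, so that each surgery strictly decreases total intersection number and the induction closes. A second point requiring care is the claim that consecutive cut systems in a surgery sequence lie in a common vertex of $\CP_2$: one must verify that $Z \cup Z_1$, where $Z_1$ is the surgery, can indeed be completed by a third non-separating meridian disjoint from both — this should follow because $Z$ and $Z_1$ share a curve ($\alpha_2$) and the two differing curves $\alpha_1, a\cup b$ together with $\alpha_2$ cut the surface appropriately; one then invokes Proposition~\ref{prop:surgery-paths}'s structure (the surgered curve is obtained from a regular neighborhood of $\alpha_1 \cup b$) to produce the needed third curve. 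Modulo these verifications, connectivity follows by induction on intersection number.
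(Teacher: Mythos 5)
Your proposal is correct and follows essentially the same route as the paper: surger a cut system $Z_1 \subset X$ in the direction of the full multimeridian $X'$ (Proposition~\ref{prop:surgery-paths}), observe that consecutive cut systems $Z_i, Z_{i+1}$ share a curve so that $Z_i \cup Z_{i+1}$ is itself a vertex lying in the connected subgraph $\CP_2(Z_i)$, and concatenate paths inside these subgraphs. The obstacles you anticipate resolve exactly as you suspect: surgering toward the whole pants decomposition at once makes the induction close and forces the terminal cut system to be contained in $X'$, and no third curve needs to be found since $Z_i \cup Z_{i+1}$ already consists of three disjoint non-separating meridians.
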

\begin{proof}
  Let $X, Y$ be pants decompositions corresponding to vertices of
  $\CP_2$. We will construct a path joining $X$ to $Y$ in $\CP_2$.
  Choose two curves $\{ \delta_1, \delta_2 \} = Z_1$ from $X$ -- these
  will form a cut system by the definition of $\CP_2$. Now, consider
  the surgery sequence $(Z_i)$ starting in $Z_1$ in the direction of
  $Y$. Let $n$ be so that $Z_n$ is disjoint from $Y$. As $Y$ is a
  pants decomposition, this implies that actually $Z_n \subset Y$.

Also, by definition of surgery sequences, for any $i$ the cut systems $Z_{i-1}$
and $Z_{i+1}$ are contained in the complement of the cut system $Z_i$,
and thus $Z_i \cup Z_{i-1}$ and $Z_i\cup Z_{i+1}$ correspond to
vertices in the tree $\CP_2(Z_i)$. Hence, these vertices can be joined by a
path. The desired path $(X_i)$ will now
be obtained by concatenating all of these paths. To be more precise,
we will have
\begin{enumerate}
\item There are numbers $1=m(0), m(1), \ldots, m(n)$ so that for all
  $m(i-1)< j \leq m(i)$ the pants decomposition $X_j$ contains $Z_i$.
\item For all $m(i-1)< j \leq m(i)$ the pants decomposition $X_j$ are
  a geodesic in $\CP_2(Z_i)$.
\end{enumerate}
From the description above it is clear that these sequences exist, showing
Lemma~\ref{lem:cp2-connected}.
\end{proof}
We will now define projections of $\CP_2$ onto the subtrees
$\CP_2(Z)$. To this end, let $Z$ be a cut system. We define a
projection 
\[ \pi_Z: \CP_2 \to \CP_2(Z) \]
in the following way.
\begin{enumerate}[i)]
\item If $X$ is disjoint from $Z$, we simply put $\pi_Z(X) = X$.
\item If $X$ intersects $Z$, then there is a wave $w$ of $X$ with
  respect to $Z$, and we define $\pi_Z(X) = Z \cup \{\delta\}$, where
  $\delta$ is the surgery defined by the wave
  $w$. Corollary~\ref{cor:compatible-surgeries} implies that this is
  well-defined.
\end{enumerate}
\begin{proposition}\label{prop:projection-far}
  Suppose that $X, Y \in \CP_2$ are connected by an edge, and assume
  that both $X, Y$ are not disjoint from $Z$. Then
  $\pi_Z(X) = \pi_Z(Y)$.
\end{proposition}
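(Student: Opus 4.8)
The plan is to reduce the statement to a uniqueness property of surgeries in the complement of $Z$. Since $X$ and $Y$ intersect minimally (in two points) and both meet $Z$ essentially, each has a wave with respect to $Z$, and the projection $\pi_Z$ is defined via the surgery determined by that wave (using Corollary~\ref{cor:compatible-surgeries} for well-definedness). I want to show these two surgeries produce the same cut system $Z\cup\{\delta\}$. First I would pass to the four-holed sphere picture $S=\partial V - Z$ and recall from Corollary~\ref{cor:wave-graph} that $\CP_2(Z)$ is identified with the wave graph $\W(Z)$, so that the surgery defined by a wave $b$ of a meridian corresponds exactly to the admissible wave $w$ (boundary component of a regular neighborhood of the appropriate $\alpha_i$-side together with $b$) supplied by Lemma~\ref{lem:wave-description}. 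Thus it suffices to show: if $X$ and $Y$ are meridians (equivalently, arcs in $S$) with $i(X,Y)\le 2$, and both have waves with respect to $Z$, then the admissible waves they determine coincide.

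The key step is to analyze how the wave of $X$ sits relative to $Y$ when $i(X,Y)=2$ (the case $i(X,Y)=0$ being immediate, since then $X=Y$ as pants decompositions, or they share the same surgery trivially). Here I would argue as in the proof of Lemma~\ref{lem:wave-description}~iv): the two intersection points of $X$ with $Y$ cut $X$ into subarcs, and because both $X,Y$ are simple closed meridians running across $Z$, a short combinatorial bookkeeping of how these subarcs interact with the waves shows that a wave of $X$ and a wave of $Y$ must be disjoint from each other (they live in different complementary pieces, one of which is an annulus carrying an $\alpha_2^\pm$, the other a pair of pants, in the notation established in Section~\ref{sec:waves}). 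Then Lemma~\ref{lem:waveprojection-unique}~iii) — or directly the uniqueness clause in Lemma~\ref{lem:g2waves-partner}~i) together with Corollary~\ref{cor:compatible-surgeries} — forces the two surgeries to agree, since an admissible wave of $X$ and an admissible wave of $Y$ that are disjoint determine the same separating curve $\delta$ (it is the boundary of the regular neighborhood in both descriptions).

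Concretely, the cleanest route is: let $b_X$ be a wave of $X$ and $b_Y$ a wave of $Y$; show $b_X$ and $b_Y$ can be realized disjointly (using $i(X,Y)\le 2$ and minimal position, exactly the argument in Lemma~\ref{lem:wave-description}~iv)); then observe that a wave of $Y$ disjoint from $b_X$ is, after the standard reduction, one of the two arcs $b^+,b^-$ of Lemma~\ref{lem:g2waves-partner} attached to $b_X$'s cut system, and by Corollary~\ref{cor:compatible-surgeries} both of those define the surgery $\delta$ that is also the surgery of $b_X$; hence $\pi_Z(X)=Z\cup\{\delta\}=\pi_Z(Y)$.

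I expect the main obstacle to be the disjointness claim for the waves when $i(X,Y)=2$: one has to rule out the configuration where the (essentially unique, by Lemma~\ref{lem:g2waves-partner}) pair of waves of $X$ are forced to cross $Y$, and verify that minimal position together with the fact that $Y$ closes up to a simple meridian excludes this — this is precisely the ``bad wave'' phenomenon handled in Lemma~\ref{lem:g2wave-separate} and Figure~\ref{fig:badwave}, so I would lean on that lemma rather than redo the analysis. Everything else is a direct appeal to the uniqueness statements already proved.
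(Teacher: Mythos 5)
Your overall strategy (translate to the four-holed sphere $S=\partial V-Z$, compare the waves of $X$ and $Y$, use the intersection bound $i(X,Y)=2$ coming from the edge) is the same as the paper's, but the key deduction in your third paragraph has a genuine gap. You claim that a wave of $Y$ disjoint from a wave $b_X$ of $X$ must be one of the two partner arcs $b^+,b^-$ of Lemma~\ref{lem:g2waves-partner}, and hence defines the same surgery. That is false. Lemma~\ref{lem:g2waves-partner}~i) gives uniqueness only for arcs based at the \emph{opposite} boundary component $\alpha_i^-$; an admissible wave based at the \emph{same} component $\alpha_i^+$ and disjoint from $b^+$ need not be isotopic to $b^+$ --- it can be the ``other'' admissible wave, cutting off an annulus around $\alpha_2^+$ where $b^+$ cuts off one around $\alpha_2^-$. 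Such disjoint-but-distinct pairs are exactly the edges of the wave graph $\W(Z)$, and by Lemma~\ref{lem:wave-description}~iv) they correspond to meridians intersecting in two points, i.e.\ to \emph{different} surgeries. Similarly, the waves of $Y$ could be based at the other curve of $Z$ altogether and still be disjoint from $b_X$, in which case $\pi_Z(Y)$ would replace $\alpha_2$ while $\pi_Z(X)$ replaces $\alpha_1$. So ``the waves are disjoint'' does not imply ``the surgeries agree''; ruling out these disjoint-but-different configurations is the actual content of the proposition, and your appeal to Lemma~\ref{lem:g2waves-partner}~iii) does not apply, since that statement concerns waves of a single meridian $\beta$, not arbitrary arcs disjoint from $b^+$.

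The paper's proof closes this gap by working with \emph{both} waves of each pants decomposition at once: it shows that unless the full pairs $\{w_X,w_X'\}$ and $\{w_Y,w_Y'\}$ coincide, the four arcs generate at least four intersection points (any two non-disjoint arcs from a boundary component of $S$ to itself meet at least twice, and in each of the problematic configurations above at least two of the four cross-pairs are forced to intersect), which exceeds the budget of two provided by the edge. The partner waves are indispensable for getting from two intersection points to four, and your single-wave reduction cannot see them. A small additional point: there is no $i(X,Y)=0$ case to dispose of, since an edge of $\CP_2$ means intersection number exactly two.
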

\begin{proof}
  Since $X$ and $Y$ are not disjoint from $Z$, there are 
  waves $w_X, w'_X, w_Y, w'_Y$ as in
  Lemma~\ref{lem:g2waves-partner}. We claim that unless
  $\{w_X, w'_X\} = \{w_Y, w'_Y\}$, the total number of intersections between
  $\{w_X, w'_X\}, \{w_Y, w'_Y\}$ is at least $4$, contradicting that $X, Y$
  are joined by an edge.

  However, this is seen in a similar way as the argument in
  Lemma~\ref{lem:g2waves-partner} in different cases (compare Figure~\ref{fig:projection-stable}). First observe that as the waves are
  arcs in a four-holed sphere joining the same boundary to itself,
  two waves are either disjoint or intersect at least in two points.

  Suppose first that the waves of $Y$ are based at the same component
  of $Z$ as the ones of $X$, and assume that $w_X, w_Y$ approach from
  the same side. If $w_X$ and $w_Y$ are disjoint, then by the
  uniqueness statement of Lemma~\ref{lem:g2waves-partner} we have that
  $\{w_X, w'_X\} = \{w_Y, w'_Y\}$, and thus the claim.
  If $w_X, w_Y$ are not disjoint, then $w_Y$ also intersects
  $w'_X$ (in at least two points), and we are done.

  The case where the waves of $X$ and $Y$ are based at different components
  is similar, noting that each of $w_Y, w_Y'$ needs to intersect at least one
  of the $w_X, w'_X$.
 \begin{figure}[h!]
    \centering
    \includegraphics[width=\textwidth]{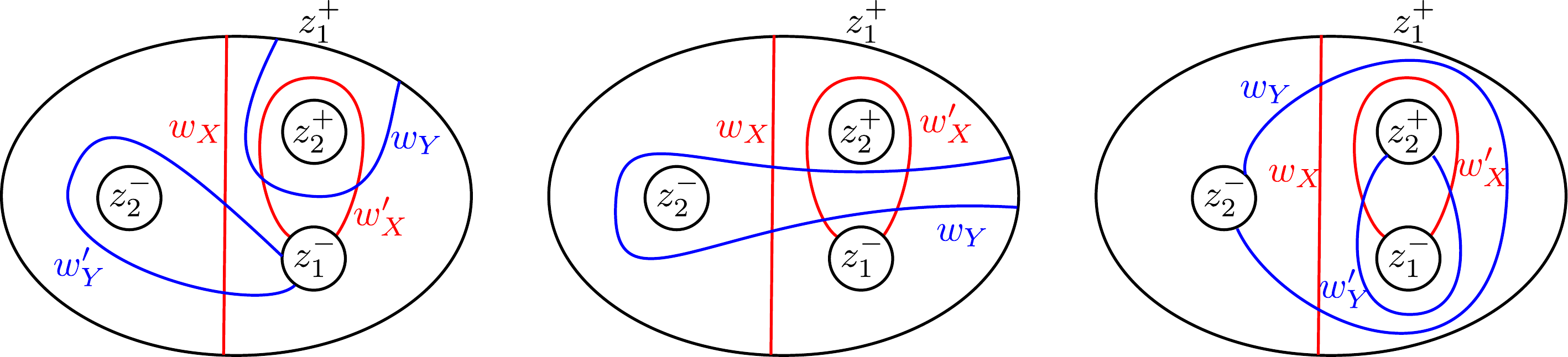}
    \caption{The three cases in the proof of Proposition~\ref{prop:projection-far}. In any configuration, different waves generate at least four intersection points.}
    \label{fig:projection-stable}
  \end{figure}
\end{proof}

\begin{proposition}\label{prop:projection-into-link}
  Suppose that $X, Y \in \CP_2$ are joined by an edge, that $X$ is not
  disjoint from $Z$, but $Y$ is disjoint from $Z$.
  Then $\pi_Z(X) = \pi_Z(Y)$.
\end{proposition}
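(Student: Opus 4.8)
The plan is to reduce this to the already-established Proposition~\ref{prop:projection-far} by examining the geometry of $Y$ relative to $Z$. Since $Y$ is disjoint from $Z$, the pants decomposition $Y$ contains $Z$, so $Y = Z \cup \{\delta_Y\}$ for some non-separating meridian $\delta_Y$ disjoint from $Z$; in other words, $Y$ is already a vertex of the tree $\CP_2(Z)$, and $\pi_Z(Y) = Y = Z \cup \{\delta_Y\}$. Thus what must be shown is that the surgery on $Z$ defined by a wave of $X$ yields exactly the curve $\delta_Y$. The key input is that $X$ and $Y$ differ in exactly two intersection points (they are joined by an edge in $\CP_2$), and that $Z \subset Y$, so in fact $X$ and $Z \cup \{\delta_Y\}$ intersect in two points, which forces the two points of $X \cap Z \cup \{\delta_Y\}$ to lie on $\delta_Y$ (since if $X$ met $Z$ it would meet it in at least two points, already exhausting the intersection budget, but then $X$ would be disjoint from $\delta_Y$ — contradicting that $X$ is \emph{not} disjoint from $Z$ only if we are careful; in fact the cleanest statement is: $X$ meets $\delta_Y$ in exactly two points and is disjoint from $Z$, OR $X$ meets one curve of $Z$ in two points and is disjoint from the rest of $Y$).

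First I would dispose of the first alternative: if $X$ is disjoint from $Z$ there is nothing to prove since then $\pi_Z(X) = X$ and $X, Y$ are adjacent vertices both in $\CP_2(Z)$ — but this is excluded by hypothesis. So we are in the situation that $X$ meets $Z$, hence meets some component $\alpha_i$ of $Z$ in at least two points; since $i(X,Y)=2$ and $Z \subset Y$, these are \emph{all} the intersection points, so $X$ is disjoint from $\delta_Y$ and from the other component of $Z$, and meets $\alpha_i$ in exactly two points. Then the unique subarc of $X$ cut off by $\alpha_i$ is a wave $w$ of $X$ with respect to $Z$ (it has both endpoints on $\alpha_i$, and approaches from the same side — this last point needs the observation that an embedded arc with two endpoints on the same side, were it to approach from opposite sides, would fail the non-separating/embeddedness analysis as in Proposition~\ref{prop:surgery-paths}). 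By Corollary~\ref{cor:compatible-surgeries} the surgery $\delta$ defined by $w$ is well-defined, so $\pi_Z(X) = Z \cup \{\delta\}$.

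It remains to identify $\delta$ with $\delta_Y$. Here I would argue as in the proof of Lemma~\ref{lem:g2waves-partner}: $\delta$ is, by definition of the surgery, homotopic to a boundary component of a regular neighbourhood of $\alpha_i \cup w$ in $S = \partial V - Z$ (the one lying on the correct side so that the result is a cut system). On the other hand, $w \subset X$ is disjoint from $\delta_Y$, and $\delta_Y$ together with $\alpha_i$ bounds (inside $S$) a pair of pants in which $w$ runs — more precisely, $\delta_Y$ is a non-separating meridian disjoint from $Z$, so by Lemma~\ref{lem:wave-description}~iii) it corresponds to an admissible wave $w'$, and $w$ being disjoint from $\delta_Y$ forces $\pi_{w'}$-type compatibility; the curve defined by the surgery on $w$ must be the one non-separating meridian disjoint from both $Z$ and $w$, which by Lemma~\ref{lem:wave-description}~iii) (uniqueness) is $\delta_Y$ itself. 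Thus $\delta = \delta_Y$ and $\pi_Z(X) = Z \cup \{\delta_Y\} = Y = \pi_Z(Y)$, as desired.

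The main obstacle I anticipate is the bookkeeping in the second paragraph: verifying cleanly that the two intersection points of $X$ with $Y$ must both sit on $\delta_Y$ (equivalently, that $X$ cannot fritter away one intersection on a curve of $Z$ and one on $\delta_Y$), and that the resulting subarc genuinely is a \emph{wave} in the technical sense (both endpoints on the same component, same side). The same-side condition is the subtle point — it should follow because a subarc of an embedded meridian with endpoints on a non-separating curve, if it switched sides, would obstruct the complement from being connected after surgery — but it will require invoking the structure from Proposition~\ref{prop:surgery-paths} carefully rather than by a one-line appeal.
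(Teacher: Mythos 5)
Your proposal is correct and follows essentially the same route as the paper: the edge relation forces $X$ and $Y=Z\cup\{\delta_Y\}$ to share $\delta_Y$ and exactly one curve of $Z$, the third curve of $X$ supplies the waves, and the surgered curve is identified with $\delta_Y$ because a non-separating meridian in $S=\partial V - Z$ disjoint from the wave is unique (Lemma~\ref{lem:wave-description}). The "same-side" worry you flag is not a real obstacle, since Proposition~\ref{prop:surgery-paths} already guarantees a wave exists and, with only two intersection points, it must be one of the two complementary subarcs.
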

\begin{proof}
  Since $X$ is not disjoint from $Z$, it has a pair of waves
  $w_X, w_X'$ as in Lemma~\ref{lem:g2waves-partner}. $Y$ differs from
  $X$ by exchanging a single curve of $X$. Since $Y$ is disjoint from
  $Z$ but $X$ is not, two curves $x_1, x_2$ of $X$ are disjoint from
  $Z$, while a third one $x_3$ contributes the waves.  The pair of
  curves $\{x_1, x_2\}$ which is disjoint from $Z$ has to be distinct
  from $Z$ as otherwise there could not be any waves. Hence, $X$ and $Z$
  have precisely one curve in common, say $x_1$. The other curve $x_2$, being
  disjoint from one of the curves in $Z$ and the waves, is then
  necessarily the surgery along that wave (see
  Figure~\ref{fig:becoming-disjoint}).
  
  The move from $X$ to $Y$ replaces the curve $x_3$ contributing the
  waves, and therefore keeps $x_2$ -- which will be the projection of
  both $X$ and $Y$.
\end{proof}
  \begin{figure}[h!]
    \centering
    \includegraphics[width=0.4\textwidth]{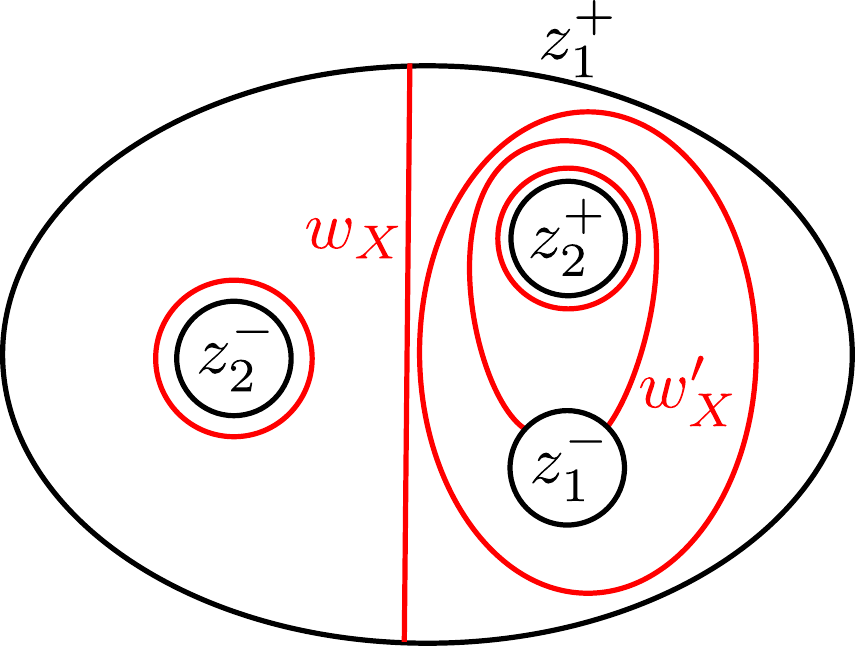}
    \caption{The situation in
      Proposition~\ref{prop:projection-into-link}. $X$ has two waves
      $w_X, w'_X$ contributed by $x_3 \in X$ and one curve $x_1=z_2$ in common. The remaining curve $x_2\in X$ in
      $X$ is then necessarily the surgery at the wave. Doing a single
      move to make $X$ disjoint from $Z$ will replace the curve $x_3$
      contributing the wave, keeping $x_2$ and therefore the projection fixed.}
    \label{fig:becoming-disjoint}
  \end{figure}

Together, these propositions can be rephrased as saying
that the projection $\pi_Z$ from $\CP_2$ to $\CP_2(Z)$ can only change
along a path while that path is actually contained within $\CP_2(Z)$.

%
\begin{theorem}\label{thm:CP-tree}
  The graph $\CP_2$ is a tree. 
\end{theorem}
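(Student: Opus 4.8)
The plan is to show that $\CP_2$ is connected (already done in Lemma~\ref{lem:cp2-connected}) and contains no embedded cycles. The key tool is the family of subtrees $\CP_2(Z)$ together with the projections $\pi_Z: \CP_2 \to \CP_2(Z)$, and the crucial property established in Propositions~\ref{prop:projection-far} and~\ref{prop:projection-into-link}: along any edge path in $\CP_2$, the projection $\pi_Z$ is constant \emph{except} possibly when traversing an edge both of whose endpoints lie in $\CP_2(Z)$. This mimics the standard argument for showing a graph built from a tree of trees (or a graph with a suitable projection system) is itself a tree.

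First I would argue as follows. Suppose toward a contradiction that $c = (X_0, X_1, \ldots, X_n = X_0)$ is an embedded cycle in $\CP_2$ of minimal length; since $\CP_2(Z)$ is a tree for every cut system $Z$ (Corollary~\ref{cor:cpz-tree}), the cycle $c$ cannot be entirely contained in any single $\CP_2(Z)$, so it must leave every such subtree. Pick an edge $e = (X_0, X_1)$ of $c$, and let $Z \subset X_0 \cap X_1$ be a cut system contained in both endpoints (such a $Z$ exists: two curves common to the adjacent pants decompositions $X_0, X_1$ form a cut system since any two of the three non-separating meridians in a vertex of $\CP_2$ form a cut system, and adjacent pants decompositions share two curves). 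Then both $X_0$ and $X_1$ lie in $\CP_2(Z)$, so $\pi_Z(X_0) = X_0$ and $\pi_Z(X_1) = X_1$ are distinct.

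Now traverse the rest of the cycle from $X_1$ back to $X_0$, i.e. along $X_1, X_2, \ldots, X_n = X_0$. I claim $\pi_Z$ is constant along this portion. Indeed, for each successive edge $(X_{i}, X_{i+1})$ with $1 \le i \le n-1$: if both $X_i, X_{i+1}$ are not disjoint from $Z$, then $\pi_Z(X_i) = \pi_Z(X_{i+1})$ by Proposition~\ref{prop:projection-far}; if exactly one of them is not disjoint from $Z$, Proposition~\ref{prop:projection-into-link} gives the same conclusion; and if both are disjoint from $Z$, then both lie in $\CP_2(Z)$ and are joined by an edge there — but then the whole edge $(X_i, X_{i+1})$, together with the edge $e=(X_0,X_1)$, would give two distinct edges of $\CP_2(Z)$ on the cycle, and one can extract a strictly shorter cycle in the tree $\CP_2(Z)$ by replacing the arc of $c$ outside $\CP_2(Z)$ with the geodesic between the entry/exit points, contradicting either that $\CP_2(Z)$ is a tree or the minimality of $c$; so this last case can be excluded after reducing to a cycle meeting $\CP_2(Z)$ in exactly the single edge $e$. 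Hence $\pi_Z(X_n) = \pi_Z(X_1)$, i.e. $\pi_Z(X_0) = \pi_Z(X_1)$, contradicting that these are distinct vertices of $\CP_2(Z)$.

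The main obstacle I anticipate is the bookkeeping in the last case above: one must make precise the reduction to a cycle whose intersection with $\CP_2(Z)$ is a single edge, so that the dichotomy "$\pi_Z$ constant off $\CP_2(Z)$" can be applied cleanly. This is entirely analogous to the reduction performed in the proof of Theorem~\ref{thm:wave-tree} (where one passes to a sub-cycle meeting the link of $w_0$ in two vertices), and Corollary~\ref{cor:cpz-tree}'s statement that distinct subtrees $\CP_2(Z), \CP_2(Z')$ meet in at most a point is exactly what prevents pathological overlaps. Once that reduction is in place, Propositions~\ref{prop:projection-far} and~\ref{prop:projection-into-link} close the argument immediately, and combined with Lemma~\ref{lem:cp2-connected} this proves that $\CP_2$ is a tree.
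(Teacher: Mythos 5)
Your overall strategy is exactly the paper's: establish connectivity via Lemma~\ref{lem:cp2-connected}, take a putative cycle, fix a cut system $Z$ contained in (an edge of) the cycle, and use Propositions~\ref{prop:projection-far} and~\ref{prop:projection-into-link} to show that $\pi_Z$ cannot change along the part of the cycle outside $\CP_2(Z)$, contradicting the fact that the cycle must return to a distinct vertex of the tree $\CP_2(Z)$. The choice of $Z = X_0\cap X_1$ to guarantee two distinct vertices of the cycle in $\CP_2(Z)$ is the right move (the paper is terser here, saying only ``choose a cut system $Z\subset P_1$'').

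The one step that does not work as written is your treatment of the case where an edge $(X_i,X_{i+1})$ of the remaining arc has both endpoints in $\CP_2(Z)$. You propose to exclude this by ``replacing the arc of $c$ outside $\CP_2(Z)$ with the geodesic between the entry/exit points'' to get a strictly shorter cycle. But the intersection of $c$ with $\CP_2(Z)$ may genuinely be a path of length greater than one, and since an embedded path in a tree \emph{is} the geodesic between its endpoints, this replacement produces only a degenerate back-and-forth walk, not a shorter embedded cycle; so neither the tree property of $\CP_2(Z)$ nor minimality of $c$ is contradicted, and the reduction to ``$c$ meets $\CP_2(Z)$ in exactly the edge $e$'' is not available in general. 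The correct repair --- and what the paper actually does --- is not to exclude this case but to avoid it: pass to a sub-cycle so that $c\cap\CP_2(Z)$ is a single consecutive segment $X_0,\dots,X_k$ with $X_0\neq X_k$ (which is possible since $c$ contains the edge $(X_0,X_1)$ of $\CP_2(Z)$ but cannot lie entirely in the tree $\CP_2(Z)$), and then run the projection argument only along the complementary arc $X_k, X_{k+1},\dots,X_n=X_0$, all of whose interior vertices lie outside $\CP_2(Z)$. There, every edge falls under one of your first two cases, so $\pi_Z(X_k)=\pi_Z(X_0)$, forcing $X_k=X_0$ and giving the contradiction. With that adjustment your argument coincides with the paper's proof.
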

\begin{proof}
  The proof is very similar to the proof of
  Theorem~\ref{thm:wave-tree}. We have already seen connectivity of
  $\CP_2$ in Lemma~\ref{lem:cp2-connected}. Suppose that $P_1, \ldots, P_n$
  is a nontrivial cycle in $\CP_2$. We choose a cut system $Z \subset P_1$.
  Since $\CP_2(Z)$ is a tree, the cycle cannot be completely contained in
  $\CP_2(Z)$. Hence, by passing to a sub-cycle we may assume
  \begin{enumerate}[i)]
  \item There is a number $k$, so that $P_i$ is a vertex of $\CP_2(Z)$ exactly
    for $1\leq i\leq k$.
  \item The vertices $P_1, P_k$ are distinct.
  \end{enumerate}
  Applying Proposition~\ref{prop:projection-into-link} (with
  $Y=P_k$ and $X=P_{k+1}$) we conclude that $\pi_Z(P_{k+1}) =
  \pi_Z(P_k)$. Inductively applying
  Proposition~\ref{prop:projection-far} we conclude that
  $\pi_Z(P_i) = \pi_Z(P_k)$ for $i\leq n$. Applying
  Proposition~\ref{prop:projection-into-link} again (for
  $X=P_n, Y=P_1$), we conclude that $\pi_Z(P_1) = \pi_Z(P_K)$. But,
  since $P_1, P_n \in \CP_2(Z)$, we conclude $P_1 = P_k$, violating
  assumption ii) above. This shows that $\CP_2$ admits no cycles and
  therefore is a tree.
\end{proof}

\subsection{Controlling Twists}
\label{subsec:twists}
In this subsection we study how subsurface projections to annuli
around non-separating meridians $\alpha$ behave along geodesics in $\CP_2$.
We begin with the following lemma, which is likely known to experts.
\begin{lemma}\label{lem:coarsely-transitive-projection}
  Suppose that $Y \subset \partial V$ is a subsurface, and that
  $\alpha \subset Y$ is an essential simple closed curve. Suppose that
  $\beta_1, \beta_2$ are two curves which intersect $\partial Y$, and suppose
  further that there is an arc $b$ in $Y$ which intersects $\alpha$ and so
  that there are subarcs $b_i \subset Y\cap \beta_i$ which
  are isotopic to $b$ . Then
  \[ d_\alpha(\beta_1, \beta_2) \leq 5 \]
  (here, the subsurface distance $d_\alpha$ is seen as curves on $S$, not $Y$).
\end{lemma}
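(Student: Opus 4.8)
The plan is to control the annular projection $\pi_\alpha$ by exhibiting, for each $\beta_i$, a curve or arc that is disjoint from (a subarc isotopic to) $b$ and hence has a uniformly bounded projection to $\alpha$, and then to bound $d_\alpha(\beta_1,\beta_2)$ by a triangle inequality through these intermediate objects together with the fixed arc $b$. Concretely: since $b$ intersects $\alpha$, the arc $b$ itself lifts to the annular cover $\widehat{\Sigma_\alpha}$ as an arc crossing the core, so it defines a point (or bounded-diameter set) in $\mathcal A(\widehat{\Sigma_\alpha})$; call it $\pi_\alpha(b)$. The key point is that $\pi_\alpha(\beta_i)$ is within bounded distance of $\pi_\alpha(b)$ for $i=1,2$, which immediately gives $d_\alpha(\beta_1,\beta_2) \le d_\alpha(\beta_1,b) + \operatorname{diam}\pi_\alpha(b) + d_\alpha(b,\beta_2)$, and each term on the right is a small constant.

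First I would make precise what ``projection of an arc'' means and why it is bounded: for an essential arc $b$ in $Y$ crossing $\alpha$, lift $b$ to the cover and compactify; the relevant lifts crossing the core of $\widehat{\Sigma_\alpha}$ form a finite set of arcs, and since $b$ is embedded in $Y$ these lifts are disjoint from each other, so they span a set of diameter $\le 1$ in $\mathcal A(\widehat{\Sigma_\alpha})$. Next, I would compare $\pi_\alpha(\beta_i)$ with $\pi_\alpha(b_i)$ where $b_i\subset Y\cap\beta_i$ is the subarc isotopic to $b$. Here I need the standard fact (essentially the Lipschitz property of subsurface projection, or a direct surgery argument) that if $b_i$ is a subarc of $\beta_i$, then any lift of $b_i$ crossing the core of the cover is a subarc of a lift of $\beta_i$, so $\pi_\alpha(b_i)\subset\pi_\alpha(\beta_i)$ up to homotopy at the new endpoints — concretely $d_\alpha(\beta_i, b_i)\le 1$ or at worst a small constant. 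Since $b_i$ is isotopic in $Y$ to $b$ (an isotopy allowed to move endpoints on $\partial Y$), the arcs $b_i$ and $b$ have the same image under $\pi_\alpha$ up to bounded error — the only subtlety is that the isotopy moves endpoints, but since we measure in $\mathcal A(\widehat{\Sigma_\alpha})$ where vertices are arcs up to endpoint-moving homotopy, this costs at most $1$.

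Assembling: $d_\alpha(\beta_1,\beta_2)\le d_\alpha(\beta_1,b_1)+d_\alpha(b_1,b)+\operatorname{diam}\pi_\alpha(b)+d_\alpha(b,b_2)+d_\alpha(b_2,\beta_2)$, and bounding the five terms by $1$ each (or by whatever the honest constants turn out to be) yields the bound $5$. I would be a little careful about the constant: the cleanest route is to observe that $\beta_1$, $\beta_2$ and $b$ can be realized so that a subarc of $\beta_1$, a subarc of $\beta_2$, and $b$ all lie in a common isotopy class of arc in $Y$, hence their relevant lifts to $\widehat{\Sigma_\alpha}$ can be taken pairwise disjoint (crossing arcs of the same arc in $Y$), giving a single set of diameter $\le 1$, and then $d_\alpha(\beta_i, \text{that set})\le 2$ by the subarc-versus-curve comparison, for a total of $\le 5$.

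The main obstacle, and the only place where genuine care is needed, is the subarc comparison step: relating $\pi_\alpha(b_i)$ to $\pi_\alpha(\beta_i)$ when $b_i$ is only a \emph{sub}arc of the full curve $\beta_i$, and $\beta_i$ may intersect $\partial Y$ and wind around outside $Y$ in complicated ways. One has to argue that closing up $b_i$ to the full curve $\beta_i$ can be done by attaching arcs disjoint from $\alpha$ in the cover — or, if not literally disjoint, changing the endpoints-at-infinity by a bounded amount. I would handle this by noting that any lift $\hat b_i$ of $b_i$ that crosses the core of $\widehat{\Sigma_\alpha}$ extends to a lift $\hat\beta_i$ of $\beta_i$ (the covering is by a subgroup, so arcs lift to arcs), and the two endpoints at infinity of $\hat\beta_i$ lie in the closures of the two complementary half-planes cut off by $\hat b_i$; hence $\hat\beta_i$ and $\hat b_i$ have bounded distance in $\mathcal A(\widehat{\Sigma_\alpha})$, in fact they are disjoint or cross once, so the distance is $\le 1$. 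This is essentially the content of \cite[Section 2.4, Lemma 2.3]{MM2}, and I would cite it rather than reprove it, reducing the lemma to the bookkeeping described above.
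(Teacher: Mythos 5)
Your overall architecture (triangle inequality through the shared arc $b$, with the subarc-to-curve comparison isolated as the key step) matches the spirit of the paper's argument, and you correctly identify where the difficulty sits. But the justification you give for that key step does not work as stated. You claim that a lift $\hat b_i$ crossing the core cuts the annular cover into ``two complementary half-planes'' and that the endpoints at infinity of the extending lift $\hat\beta_i$ lie in their closures. The arc $\hat b_i$ is a compact arc whose endpoints lie in the \emph{interior} of the annular cover, on lifts of components of $\partial Y$; it does not separate the annulus and does not define two half-planes, so the statement has no content until $\hat b_i$ is completed to a properly embedded arc --- and how to complete it, and why the ends of $\hat\beta_i$ land on the ``correct'' sides of that completion, is precisely the whole point. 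This cannot be waved away: a subarc of a curve crossing $\alpha$ does \emph{not} in general pin down the annular projection of the whole curve up to bounded error (compare $\beta$ with $T_\alpha^n\beta$, which share many subarcs meeting $\alpha$ but have projections differing by about $n$). So some hypothesis of the lemma must enter, and in your write-up none does at this step.

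The missing input is that the endpoints of $\hat b_i$ lie on lifts $\hat\delta_1,\hat\delta_2$ of boundary components of $Y$, and that these curves are disjoint from $\alpha$, so their lifts do not cross the core of $S_\alpha$ and each bounds a disk $D_j$ whose closure meets the boundary of the compactified annulus in a single subarc of one boundary circle. It is these disks --- not any region cut off by $\hat b_i$ itself --- that trap the continuations of $\hat\beta_i$ beyond $\hat b_i$ and force the endpoints at infinity into controlled arcs; the paper then bounds the number of intersections of the two extended lifts inside $D_1$ and $D_2$ by minimal position. Your appeal to \cite[Lemma~2.3]{MM2} does not fill the gap either: that lemma concerns disjoint curves both meeting the subsurface, not the comparison of a curve with a proper subarc of itself, which is exactly what is being proved here. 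To repair the proposal you would replace the ``half-planes'' sentence by the disk argument above (and the same completion is needed even to make $\pi_\alpha(b)$ a well-defined bounded-diameter subset of $\mathcal A(\widehat{\Sigma_\alpha})$ in your first step).
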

\begin{proof}
  Up to isotopy we may assume that the curves $\beta_i$ both actually
  contain $b$ (and are in minimal position with respect to themselves,
  $\alpha$ and $\partial Y$).
  
  Let $S_\alpha \to \partial V$ be the annular cover corresponding to
  $\alpha$, and let $\hat{\alpha}$ be the unique closed lift of
  $\alpha$. Suppose that $b$ joins components $\delta_1, \delta_2$ of
  $\partial Y$.

  Consider a lift $\hat{b}$ of $b$ which intersects
  $\hat{\alpha}$. Its endpoints are contained in lifts
  $\hat{\delta}_i$ of the curves $\delta_i$.  Observe that both the
  lifts $\hat{\delta}_i$ ($i=1,2$) do not connect different boundary
  components of the annulus $S_\alpha$ (as the curves $\delta_i$ are
  disjoint from $\alpha$), and therefore $\hat{\delta}_i$
  bounds a 
  disk $D_i\subset S_\alpha$ whose closure in the closed annulus
  $\overline{S_\alpha}$ intersects the boundary of $\overline{S_\alpha}$
  in a connected subarc.

  Now consider lifts $\hat{\beta}_j$ which contain the arc $\hat{b}$.
  These are concatenations of an arc in $D_1$, the arc $\hat{b}$, and
  an arc in $D_2$. As the arcs in $D_i$ can intersect in at most one
  point (otherwise, minimal position of $\beta_1, \beta_2$ would be
  violated!), this implies that there are two lifts of $\beta_i$ which
  intersect in at most $2$ points. This shows the lemma.
\end{proof}

We can use this lemma to prove the following result how subsurface
projections $\pi_\alpha$ into annuli around meridians change along
$\CP_2$--geodesics if these geodesics never involve the curve $\alpha$.
This should be seen as the direct analog of the bounded geodesic projection theorem
and its variants for hierarchies which are developed in \cite{MM2}.
\begin{proposition}\label{prop:annulus-projection-constant}
  Suppose that $X_i$ is a geodesic in $\CP_2$, and that $\alpha$ is a
  non-separating meridian. Suppose none of the pants decompositions $X_i$
  contains 
  $\alpha$. Then the subsurface projection
  $\pi_\alpha(X_i)$ is coarsely constant along $X_i$: there is a
  constant $K$ independent of $\alpha$ and the sequence, so that
  \[ d_\alpha(X_i, X_j) \leq K.\]
\end{proposition}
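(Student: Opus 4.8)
The strategy is to reduce the statement to Lemma~\ref{lem:coarsely-transitive-projection} by showing that consecutive pants decompositions along the geodesic always share a subsurface into which $\alpha$ projects and which carries a common subarc intersecting $\alpha$. More precisely, I would argue as follows. Fix the geodesic $(X_i)$ and the non-separating meridian $\alpha$, none of whose terms contains $\alpha$. Since each $X_i$ is a pants decomposition by non-separating meridians, and since $\alpha\notin X_i$, the curve $\alpha$ must intersect some curve of $X_i$; in particular $\pi_\alpha(X_i)$ is defined. The key geometric input is that $X_i$ and $X_{i+1}$ differ by an elementary move: they share two curves and intersect minimally (in two points) in the third. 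I would make this precise using the wave picture of Section~\ref{sec:waves}: if $Z$ denotes the common cut system $X_i\cap X_{i+1}$, then the two differing curves $\delta_i\in X_i$, $\delta_{i+1}\in X_{i+1}$ are related by a single surgery along a wave, hence $i(\delta_i,\delta_{i+1})=2$ and together with $Z$ they fill a four-holed sphere.

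First I would set up the surface $Y$ to which Lemma~\ref{lem:coarsely-transitive-projection} will be applied. Because $\alpha$ is disjoint from at most some of the curves in $X_i\cup X_{i+1}$, I would take $Y$ to be a component of the complement of those curves of $X_i\cap X_{i+1}$ that are disjoint from $\alpha$ — concretely, $\alpha$ lies in the complement of the two common cut-system curves that it happens to miss, or, if $\alpha$ intersects a common curve, one can shrink $Y$ accordingly. The point is that $\partial Y$ consists of curves disjoint from $\alpha$, so $\alpha\subset Y$, and both $\delta_i$ and $\delta_{i+1}$ (the only curves of $X_i,X_{i+1}$ not automatically disjoint from $\alpha$) cross $\partial Y$. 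The central claim is then: there is an arc $b\subset Y$ crossing $\alpha$ and subarcs $b_i\subset \delta_i\cap Y$, $b_{i+1}\subset\delta_{i+1}\cap Y$ each isotopic to $b$. This should follow from the fact that $\delta_i$ and $\delta_{i+1}$ are related by a surgery along a single wave: the surgery only modifies the curve in a neighbourhood of that wave (and a subarc of a cut-system curve), so outside that neighbourhood $\delta_i$ and $\delta_{i+1}$ literally agree, and in particular any crossing of $\alpha$ by $\delta_i$ away from the surgery region persists in $\delta_{i+1}$. Since $\alpha\notin X_i, X_{i+1}$ there is at least one such crossing. Feeding $b$ into Lemma~\ref{lem:coarsely-transitive-projection} gives $d_\alpha(X_i,X_{i+1})=d_\alpha(\delta_i,\delta_{i+1})\le 5$.

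Finally I would globalize. Having shown $d_\alpha(X_i,X_{i+1})\le 5$ for every step of the geodesic does \emph{not} immediately bound $d_\alpha(X_i,X_j)$ — annular projections satisfy no triangle-type inequality over long distances in general. The fix is the standard Behrstock-type / bounded geodesic argument: along a $\CP_2$--geodesic, the projection $\pi_\alpha$ is \emph{coarsely locally constant} as above, and moreover one needs to know that it cannot ``drift'' by accumulating many small $\le 5$ jumps in a consistent direction. Here I would invoke the tree structure of $\CP_2$ (Theorem~\ref{thm:CP-tree}) together with the structure of the subtrees $\CP_2(Z)$: a geodesic in a tree that never passes through a vertex containing $\alpha$ stays, by Corollary~\ref{cor:cpz-tree} and the projection-stability results (Propositions~\ref{prop:projection-far} and~\ref{prop:projection-into-link}), in a region where the only curve whose twisting could change is controlled; concretely, every edge of the geodesic shares a cut system $Z$ with a fixed ``side'' relative to $\alpha$, so the arc $b$ above can be chosen \emph{uniformly} along the whole geodesic (not just step by step). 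Then one application of Lemma~\ref{lem:coarsely-transitive-projection} with this uniform $b$ bounds $d_\alpha(X_i,X_j)$ directly by a universal constant $K$. The main obstacle, and the step requiring the most care, is precisely this last point: showing that a single arc $b$ crossing $\alpha$ can be found that is isotopic to a subarc of \emph{every} $X_i$ along the geodesic simultaneously; this is where the tree structure and the fact that $\alpha$ is never one of the pants curves must be combined, ruling out the possibility that the surgeries gradually ``rotate'' around $\alpha$.
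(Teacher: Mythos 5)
Your local step is plausible: consecutive $X_i,X_{i+1}$ share a cut system and their third curves intersect in two points, and one can try to extract a common arc meeting $\alpha$ to feed into Lemma~\ref{lem:coarsely-transitive-projection}. (Even here, be careful: an edge of $\CP_2$ only records $i(\delta_i,\delta_{i+1})=2$; the two curves are not literally equal outside a small surgery region, so the existence of a common subarc crossing $\alpha$ needs an argument, e.g.\ via the disjoint admissible waves of Lemma~\ref{lem:wave-description}.) You are also right that a bound on each step does not globalize. The genuine gap is that your proposed globalization is exactly the part you leave unproved, and as stated it is not attainable: there is in general \emph{no} single arc $b$ crossing $\alpha$ that is isotopic to a subarc of every $X_i$ along the geodesic. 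For instance, if the geodesic lies entirely in $\CP_2(Z)$ for the cut system $Z$ completing $\alpha$ to a pants decomposition, the varying third curves $\delta_i$ sweep out a path in the wave tree $\W(Z)$ and need not all contain a common subarc; what is constant is only their projection $\pi_w(\delta_i)\in\A(w)$ to the admissible wave $w$ of $\alpha$. Your assertion that ``every edge of the geodesic shares a cut system $Z$ with a fixed side relative to $\alpha$'' is also false: the cut system preserved by an edge changes from edge to edge along a $\CP_2$--geodesic.

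The mechanism the paper actually uses, and which is missing from your sketch, is to fix the distinguished cut system $Z$ with $Z\cup\{\alpha\}$ a pants decomposition and to split the geodesic at its intersection with the subtree $\CP_2(Z)$, which is a subpath by Theorem~\ref{thm:CP-tree} and Corollary~\ref{cor:cpz-tree}. On each complementary segment, Proposition~\ref{prop:projection-far} shows that the wave of $X_i$ relative to $Z$ is \emph{literally} constant, and that single wave (arranged to meet $\alpha$) is the uniform arc to which Lemma~\ref{lem:coarsely-transitive-projection} applies. Inside $\CP_2(Z)$ one instead uses constancy of the wave-graph projection $\pi_w$ (Propositions~\ref{prop:wave-outside-link} and~\ref{prop:wave-entering-link}) and argues as in that lemma; the pieces are then glued with bounded transition estimates at the entry and exit vertices of $\CP_2(Z)$. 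So: correct local estimate, correct diagnosis of where the difficulty lies, but the decisive idea --- working with the subtree $\CP_2(Z)$ for $Z$ completing $\alpha$ and proving two separate constancy statements rather than finding one global arc --- is absent, and the route you propose in its place would fail.
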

\begin{proof}
  First observe that as none of the $X_i$ contains $\alpha$, actually all $X_i$
  intersect $\alpha$.
  Let $Z$ be a cut system completing $\alpha$ to a pants decomposition.
  Since $X_1$ intersects $\alpha$, we may assume that there
  is a wave $w$ of $X_1$ which intersects $\alpha$.

  Now consider the path $X_i$. Observe that since $\CP_2(Z)$ is a
  subtree of $\CP_2$, the intersection of the path $X_i$ with
  $\CP_2(Z)$ is a path, say $X_i, k\leq i\leq k'$.

  For $i=1, \ldots, k-1$, we have $X_i \notin \CP_2(Z)$, and by 
  Proposition~\ref{prop:projection-far} the pants decompositions $X_i$
  will therefore all have the same wave $w$. By
  Lemma~\ref{lem:coarsely-transitive-projection} this implies that the
  subsurface projection $\pi_\alpha$ is coarsely constant for
  $X_1, \ldots, X_{k-1}$ where $k$ is the first index so that
  $X_k \in \CP_2(Z)$.  The projections of $X_{k-1}$ and $X_{k}$ are
  uniformly close since $X_{k-1}, X_k$ are disjoint and both intersect
  $\alpha$.  Similarly, the projections of $X_{k'}, X_{k'}$ are
  uniformly close, and applying Proposition~\ref{prop:projection-far}
  and Lemma~\ref{lem:coarsely-transitive-projection}, we see that the
  projection is coarsely constant for $i\geq k'$.

  Hence, it suffices to show the statement of the proposition for
  paths which are completely contained in $\CP_2(Z)$.

  So, consider a path $X_j$ in $\CP_2(Z)$ which is never disjoint from
  a curve $\alpha \subset \partial V - Z$. Let $w$ be the wave
  corresponding to $\alpha$. Then, the projection $\pi_w(X_i)$ is
  constant by Proposition~\ref{prop:projection-far}, and therefore the projection
  $\pi_\alpha(X_i)$ is coarsely constant, arguing as in
  Lemma~\ref{lem:coarsely-transitive-projection}.
\end{proof}
Finally, we study the case where $\alpha$ does appear as one of the curves
along a $\CP_2$--geodesic.
\begin{corollary}\label{cor:twist-control}
  Let $\alpha$ be a non-separating meridian, and $X_i$ be a geodesic in
  $\CP_2$, which does become disjoint from $\alpha$. Then there are
  $i_0 \leq i_1$ so that the following holds:
  \begin{enumerate}[i)]
  \item For $i<i_0$ the subsurface projection $\pi_\alpha(X_i)$ is
    coarsely constant.
  \item For $i_0\leq i \leq i_1$, the curve $\alpha$ is contained in $X_i$.
  \item For $i>i_0$ the subsurface projection $\pi_\alpha(X_i)$ is
    coarsely constant.
  \end{enumerate}
\end{corollary}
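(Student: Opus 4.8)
\textit{Proof proposal.} The plan is to reduce everything to Proposition~\ref{prop:annulus-projection-constant}. Since each $X_i$ is a pants decomposition, $X_i$ fails to intersect $\alpha$ exactly when $\alpha\in X_i$; so assertion ii) is the statement that the index set $I=\{\,i: \alpha\in X_i\,\}$ is an interval $[i_0,i_1]$ (it is nonempty by hypothesis). I would deduce this by showing that the full subgraph $T_\alpha\subseteq\CP_2$ spanned by the pants decompositions containing $\alpha$ is \emph{convex} in the tree $\CP_2$ (Theorem~\ref{thm:CP-tree}): then $I$ is the index set of the intersection of the geodesic $(X_i)$ with the convex set $T_\alpha$, and intersecting a geodesic in a tree with a convex set gives a subpath, hence an interval.

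To establish convexity of $T_\alpha$, first note that $T_\alpha=\bigcup_Z\CP_2(Z)$, the union over all cut systems $Z$ with $\alpha\in Z$: every vertex of such a $\CP_2(Z)$ contains $\alpha$, and conversely if $\alpha\in X=\{\alpha,\delta_1,\delta_2\}$ then, since any two curves of a vertex of $\CP_2$ form a cut system, $X\in\CP_2(\{\alpha,\delta_1\})$. Each $\CP_2(Z)$ is a subtree, hence convex, and any two of them meet in at most one point (Corollary~\ref{cor:cpz-tree}); a union of convex subsets of a tree that can be ordered so that consecutive members intersect is again convex. So it suffices to join any two cut systems $\{\alpha,\gamma\},\{\alpha,\gamma'\}$ containing $\alpha$ by such a chain. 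Two such cut systems are directly chained precisely when $\gamma,\gamma'$ are disjoint, non-isotopic non-separating meridians, for then $\{\alpha,\gamma,\gamma'\}$ is a common vertex of $\CP_2(\{\alpha,\gamma\})$ and $\CP_2(\{\alpha,\gamma'\})$. Hence the claim reduces to connectivity of the graph of non-separating meridians disjoint from $\alpha$, with edges given by disjointness. This is a standard surgery argument in the spirit of Lemma~\ref{lem:wave-connected} and Lemma~\ref{lem:cp2-connected}: if $\gamma$ and $\gamma'$ intersect, surger $\gamma$ in the direction of $\gamma'$; since this surgery is supported in a neighborhood of $\gamma\cup\gamma'$, which is disjoint from $\alpha$, it yields a non-separating meridian still disjoint from $\alpha$ with strictly smaller intersection number with $\gamma'$, and one finishes by induction on $i(\gamma,\gamma')$.

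Granting ii), assertions i) and iii) are immediate: the portions $\{X_i: i<i_0\}$ and $\{X_i: i>i_1\}$ of the geodesic consist entirely of pants decompositions not containing $\alpha$ (by the choice of $i_0,i_1$ as the extreme indices in $I$), so Proposition~\ref{prop:annulus-projection-constant} applies to each and yields the uniform constant $K$ with $d_\alpha(X_i,X_j)\le K$ for $i,j<i_0$ and, likewise, for $i,j>i_1$ (the exponent ``$i>i_0$'' in clause iii) should read ``$i>i_1$''). No relation between the two coarse values is asserted --- nor could it be, as the jump across $[i_0,i_1]$ is exactly the twisting along $\alpha$ that the corollary isolates. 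The only nonroutine point is the connectivity of the meridian graph disjoint from $\alpha$ invoked above; like the other surgery facts of Section~\ref{sec:waves}, this is essentially contained in Section~4 of \cite{Masur-handlebodies}, and everything else is a formal consequence of the tree structure of $\CP_2$, the subtrees $\CP_2(Z)$, and Proposition~\ref{prop:annulus-projection-constant}.
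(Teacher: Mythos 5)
Your argument is essentially identical to the paper's: assertions i) and iii) are reduced to Proposition~\ref{prop:annulus-projection-constant}, and assertion ii) follows because $\CP_2(\alpha)=\bigcup_{Z\ni\alpha}\CP_2(Z)$ is a connected union of subtrees, hence a subtree, which a geodesic meets in a subpath. You merely supply the connectivity detail (via chaining the $\CP_2(Z)$ through common vertices and a surgery argument) that the paper asserts without proof, and you correctly flag the typo ``$i>i_0$'' for ``$i>i_1$'' in clause iii).
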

\begin{proof}
  In light of the previous
  Proposition~\ref{prop:annulus-projection-constant} the only thing
  that remains to show is that an interval $i_0 \leq i_1$ exists with
  the property that $X_i$ contains $\alpha$ exactly for $i_0 \leq i
  \leq i_1$. This follows since the set $\CP_2(\alpha)$ of
  non-separating meridional pants decompositions containing $\alpha$ is
  the union of $\CP_2(Z)$ for $Z$ a cut system containing $\alpha$,
  which is a connected union of subtrees, hence itself a
  subtree. Therefore, a geodesic in $\CP_2$ intersects $\CP_2(\alpha)$
  in a path.
\end{proof}

\section{A geometric model for $\Han_2$}
\label{sec:consequences-2}
In this section we define a geometric model for the handlebody group
(which is very similar to the one employed in \cite{Transactions}) and use
the results from Section~\ref{sec:complexes} in order to study the geometry
of the genus $2$ handlebody group. A first step is the following lemma.
\begin{lemma}\label{lem:dualsystem}
  Suppose that $X \in \CP_2$ is a pants decomposition, and $X =
  \{\delta_1, \delta_2, \delta_3\}$. Given $i \in \{1,2,3\}$ there is
  a curve $\beta_i$ with the following properties:
  \begin{enumerate}[i)]
  \item $\beta_i$ is a non-separating meridian.
  \item $\delta_i$ and $\beta_j$ are disjoint for $i \neq j$.
  \item $\delta_i$ and $\beta_i$ intersect in exactly two points.
  \end{enumerate}
  Furthermore, the curve $\beta_i$ is uniquely defined by these properties up to
  Dehn twist about the curve $\delta_i$.
\end{lemma}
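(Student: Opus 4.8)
The plan is to identify each dual curve $\beta_i$ with a neighbour of $\delta_i$ in the tree $\W(Z)$ from Section~\ref{sec:complexes}, where $Z$ is the cut system formed by the other two curves of $X$, and then to read off existence and uniqueness from the structure theory developed there. Fix $i$ and, by symmetry, assume $i=1$; put $Z=\{\delta_2,\delta_3\}$. Since all three $\delta_j$ are non-separating meridians on the genus two surface $\partial V$, the dual graph of the pants decomposition $X$ is a theta graph, so any two of its curves form a cut system; in particular $Z$ is a cut system and $S=\partial V-Z$ is a four-holed sphere. By Corollary~\ref{cor:wave-graph}, a curve with properties i)--iii) is exactly a non-separating meridian disjoint from $Z$ meeting $\delta_1$ in two points, i.e.\ a vertex of $\W(Z)$ joined by an edge to the vertex represented by $\delta_1$ (which is itself a non-separating meridian disjoint from $Z$, hence a vertex of $\W(Z)$). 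Thus the lemma reduces to two assertions: $\delta_1$ has a neighbour in $\W(Z)$, and any two neighbours of $\delta_1$ differ by a power of $T_{\delta_1}$.

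For existence, let $w_1$ be the admissible wave based at $\partial_0$ corresponding to $\delta_1$ via Lemma~\ref{lem:wave-description}~ii). The set $\A(w_1)$ is non-empty, directly from its definition. Picking any $a\in\A(w_1)$, Lemma~\ref{lem:waveprojection-unique}~ii) produces an admissible wave $w\neq w_1$ disjoint from both $w_1$ and $a$; the corresponding curve $\beta_1$ (Lemma~\ref{lem:wave-description}~iii)) is then a non-separating meridian disjoint from $Z$ with $i(\beta_1,\delta_1)=2$, hence has properties i)--iii).

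For uniqueness, suppose $\beta,\beta'$ both have properties i)--iii). By Lemma~\ref{lem:wave-description}~i), $\delta_1$ separates $S$ into two pairs of pants $P^{+},P^{-}$, the boundary of each consisting of $\delta_1$ together with one side of each of $\delta_2$ and $\delta_3$. Since $\beta$ is disjoint from $Z$ and meets $\delta_1$ in two points (necessarily in minimal position), each of $\beta\cap P^{+}$ and $\beta\cap P^{-}$ is a single essential arc joining $\delta_1$ to itself, and the same holds for $\beta'$. In a pair of pants such an arc is unique up to isotopy, so after isotopies inside $P^{+}$ and inside $P^{-}$ we may assume that $\beta$ and $\beta'$ coincide within each of $P^{+}$ and $P^{-}$; these two isotopies need not agree along $\delta_1$, and their discrepancy there is recorded by an integer $k$, which is exactly the statement that $\beta'=T_{\delta_1}^{k}(\beta)$. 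Conversely, for every $k$ the curve $T_{\delta_1}^{k}(\beta)$ is a non-separating meridian (the twist $T_{\delta_1}$ lies in the handlebody group since $\delta_1$ is a meridian), is disjoint from $Z$, and satisfies $i(T_{\delta_1}^{k}\beta,\delta_1)=i(\beta,\delta_1)=2$, so it again has properties i)--iii). This gives the uniqueness clause.

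I expect the only delicate point to be the passage from ``$\beta$ and $\beta'$ have isotopic complementary arcs in $P^{+}$ and in $P^{-}$'' to ``$\beta'=T_{\delta_1}^{k}(\beta)$''. This is the standard change-of-coordinates bookkeeping, keeping track of the relative twisting along $\delta_1$ of the two isotopies used to align the arcs, and it amounts to the familiar fact that a simple closed curve meeting $\delta_1$ in two points is determined, up to $T_{\delta_1}$, by its pair of complementary arcs. Everything else is immediate from Corollary~\ref{cor:wave-graph}, Lemma~\ref{lem:wave-description} and Lemma~\ref{lem:waveprojection-unique}.
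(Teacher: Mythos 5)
Your proof is correct and follows essentially the same route as the paper: cut along the cut system formed by the other two curves of $X$, translate property iii) into disjointness of admissible waves via Lemma~\ref{lem:wave-description}, and get existence from the structure of $\A(w)$ as in Lemma~\ref{lem:waveprojection-unique}. Your uniqueness step (cutting $S$ along $\delta_i$ into two pairs of pants and tracking the twisting discrepancy along $\delta_i$) is just a spelled-out version of what the paper compresses into ``arguing as in Lemma~\ref{lem:waveprojection-unique}, such an admissible wave exists and is unique up to Dehn twist.''
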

\begin{proof}
Assume without loss of generality that $i=3$.
  Consider the surface $S$ obtained by cutting $\partial V$ at
  $\delta_1, \delta_2$ as in Section~\ref{sec:waves}. The curve
  $\delta_3$ defines an admissible wave $w$ as in
  Lemma~\ref{lem:wave-description}, and by the same lemma any curve
  $\beta_3$ with the desired properties will be defined by an
  admissible wave $w'$ which is disjoint from $w$. Arguing as in
  Lemma~\ref{lem:waveprojection-unique}, such an admissible wave $w'$
  exists and is unique up to Dehn twist in $\delta_3$. This shows both
  claims of the lemma.
\end{proof}
\begin{definition}
  If $X \in \CP_2$, we call a curve $\beta_i$ \emph{dual to $\gamma_i
    \in X$} if it satisfies the conclusion of
  Lemma~\ref{lem:dualsystem}. A set $\Delta = \{\beta_1, \beta_2,
  \beta_3\}$ containing a dual to each $\gamma_i \in X$ is called a
  \emph{dual system} to $X$.
\end{definition}
Since the handlebody group acts transitively on pants decompositions
consisting of non-separating meridians, we see that the handlebody
group also acts transitively on pairs $(X, \Delta)$ where $X \in \CP_2$ and
$\Delta$ is a dual system to $X$.

Next, we will describe a procedure to canonically modify the dual
system when changing the pants decomposition $X$ to an adjacent one
$X'$ in $\CP_2$. Suppose that $X \in \CP_2$ is a pants
decomposition, and that $\Delta$ is a dual system. Suppose that
$\delta \in \Delta$ is the dual curve to a curve $\gamma \in X$. Then
the system $X' = X\cup\{\delta\}\setminus\{\gamma\}$ obtained by
swapping $\gamma$ for $\delta$ is also a pants decomposition
consisting of non-separating meridians, and defines a vertex $X'$
adjacent to $X$ in $\CP_2$.
We say that $X'$ is obtained from $(X, \Delta)$ by
\emph{switching $\gamma$}, 

The curve $\gamma$ is dual to $\delta$ in $X'$ in the sense of
Lemma~\ref{lem:dualsystem}. However, the other curves $\delta_1,
\delta_2$ of $\Delta\setminus\{\delta\}$ are not -- each of them will
intersect $\delta$ in four points. The following lemma will allow us
to clean the situation up in a unique way.
\begin{lemma}\label{lem:cleanup}
  Suppose that $X$ is a pants decomposition, and $\Delta$ is a dual
  system.  Let $\gamma \in X$ be given, and suppose $X'$ is obtained from $(X,
  \Delta)$ by switching $\gamma$. 

  Let $\gamma'\in X$ be distinct from $\gamma$, and $\delta'\in
  \Delta$ its dual. Then there is a dual curve $c(\delta')$ to $\gamma'$
  for the system $X'$, and the assignment $\delta' \mapsto c(\delta')$
  commutes with Dehn twists about $\gamma'$.
\end{lemma}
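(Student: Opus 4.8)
The plan is to work in the cut-open picture. Let $\gamma' \in X$ be the curve we are keeping, and pass to the four-holed sphere $S = \partial V - \{\gamma', \eta\}$, where $\eta$ is the third curve of $X$ (so $X = \{\gamma, \gamma', \eta\}$). Then $\gamma$ and $\gamma'$ are both represented by admissible waves in $S$; switching $\gamma$ for $\delta$ replaces the wave of $\gamma$ by the unique disjoint admissible wave (Lemma~\ref{lem:waveprojection-unique}~ii)), but does not touch the wave representing $\gamma'$. The dual $\delta'$ of $\gamma'$ in the system $X$ is, by the proof of Lemma~\ref{lem:dualsystem}, also encoded by an admissible wave — but with respect to the cut system $\{\gamma, \eta\}$, not $\{\gamma', \eta\}$. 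The natural thing is therefore to reinterpret everything with respect to the four-holed sphere $S'' = \partial V - \{\gamma, \eta\}$ and track how the relevant arcs change when we swap $\gamma$ for $\delta$.

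First I would record the combinatorial input: by Lemma~\ref{lem:dualsystem}, a dual curve to $\gamma'$ for $X'$ exists and is unique up to twisting about $\gamma'$, so the content of the lemma is purely the \emph{canonicity} of the assignment $\delta' \mapsto c(\delta')$, i.e.\ that one can make a consistent choice. So define $c(\delta')$ to be that dual curve to $\gamma'$ in $X'$ which is "closest" to $\delta'$: concretely, cut $\partial V$ along $\gamma'$ to get the annular/once-holed picture, observe that both $\delta'$ and any dual to $\gamma'$ for $X'$ meet $\gamma'$ in exactly two points, and among the $\gamma'$-twist orbit of duals for $X'$ pick the unique representative whose wave-encoding agrees with that of $\delta'$ on the pair-of-pants complementary to $\gamma'$ in the appropriate four-holed sphere. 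The key point is that the move from $X$ to $X'$ only changes the curve $\gamma$, and $\gamma$ contributes to the wave-data of $\delta'$ only through a single subarc (the one crossing the region near $\gamma$); replacing $\gamma$ by the disjoint wave $\delta$ changes that subarc in a unique, prescribed way, which pins down $c(\delta')$ uniquely once $\delta'$ is fixed.

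The final clause — that $\delta' \mapsto c(\delta')$ commutes with Dehn twists about $\gamma'$ — should then be essentially formal. A Dehn twist $T_{\gamma'}$ fixes $X$, $X'$, $\gamma$, $\delta$, and $\eta$ (these are all disjoint from $\gamma'$ except for $\gamma$, which is disjoint from $\gamma'$ as well since they are distinct curves in a pants decomposition), so $T_{\gamma'}$ acts on the set of duals to $\gamma'$ for both $X$ and $X'$ simply by translating the twisting parameter. Since $c$ was \emph{defined} by matching the non-twisting part of the wave-data across the move $X\to X'$, and that part is transported equivariantly by $T_{\gamma'}$, we get $c(T_{\gamma'}\delta') = T_{\gamma'} c(\delta')$ directly from the definition.

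The main obstacle I anticipate is purely bookkeeping: the dual of $\gamma'$ in $X$ and the dual of $\gamma'$ in $X'$ live most naturally in two \emph{different} four-holed spheres (complementary to $\{\gamma,\eta\}$ versus $\{\delta,\eta\}$), and one must carefully identify the two pair-of-pants pieces that are complementary to $\gamma'$ in each, and check that the swap $\gamma \leftrightarrow \delta$ induces a canonical identification of these pieces under which the wave of $\delta'$ goes to a well-defined admissible wave. Once this identification is set up cleanly — essentially by noting that $\gamma'$, $\eta$, and the region between $\gamma$ and $\delta$ are all untouched — uniqueness and $T_{\gamma'}$-equivariance fall out, much as in the proof of Lemma~\ref{lem:waveprojection-unique}.
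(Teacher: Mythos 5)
Your proposal correctly isolates what has to be proved (a canonical choice within the $T_{\gamma'}$--orbit of duals supplied by Lemma~\ref{lem:dualsystem}), and your third paragraph is right that equivariance under $T_{\gamma'}$ is formal \emph{once} the defining recipe for $c(\delta')$ is itself $T_{\gamma'}$--equivariant. The gap is that the recipe is never actually given. ``Pick the unique representative whose wave-encoding agrees with that of $\delta'$ on the pair-of-pants complementary to $\gamma'$'' is a placeholder: you have not exhibited an identification between the two four-holed spheres $\partial V - \{\gamma,\eta\}$ and $\partial V - \{\delta,\eta\}$ under which ``agreement of wave-data'' makes sense, nor shown that such a criterion selects exactly one element of the twist orbit -- and this is precisely the content of the lemma. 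Your own closing paragraph concedes this. A second, related misstep: the assertion that ``$\gamma$ contributes to the wave-data of $\delta'$ only through a single subarc'' points at the wrong difficulty. Since $\delta'$ is disjoint from $\gamma$, removing $\gamma$ costs nothing; the problem is that $\delta'$ meets the \emph{incoming} curve $\delta$ in four points (as noted just before the lemma in the paper), and it is these intersections that must be resolved canonically.

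The paper's proof supplies the missing construction by an explicit surgery. Work in the four-holed sphere obtained by cutting along the cut system $\{\gamma',\eta\}$ (not along $\{\gamma,\eta\}$ as in your setup). There $\delta'$ appears as two arcs $w, w'$ with endpoints on the two sides of $\gamma'$, each crossing $\delta$ in two points. Each arc admits exactly two surgeries along $\delta$ preserving its endpoints, and exactly one of these is essential; calling the results $v, v'$, one sets $c(\delta') = v \cup v'$, which by construction meets $\gamma'$ twice and is disjoint from $\delta$ and $\eta$, hence is a dual for $X'$. The ``exactly one essential surgery'' dichotomy is what replaces your undefined matching criterion, and equivariance is immediate because the surgery is supported away from the annular neighbourhood of $\gamma'$ in which $T_{\gamma'}$ is supported. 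If you want to complete your version, you must either carry out this surgery or prove that your matching condition is well-defined and singles out a unique orbit representative; as written, the central object of the lemma has not been constructed.
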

\begin{figure}
  \centering
  \includegraphics[width=\textwidth]{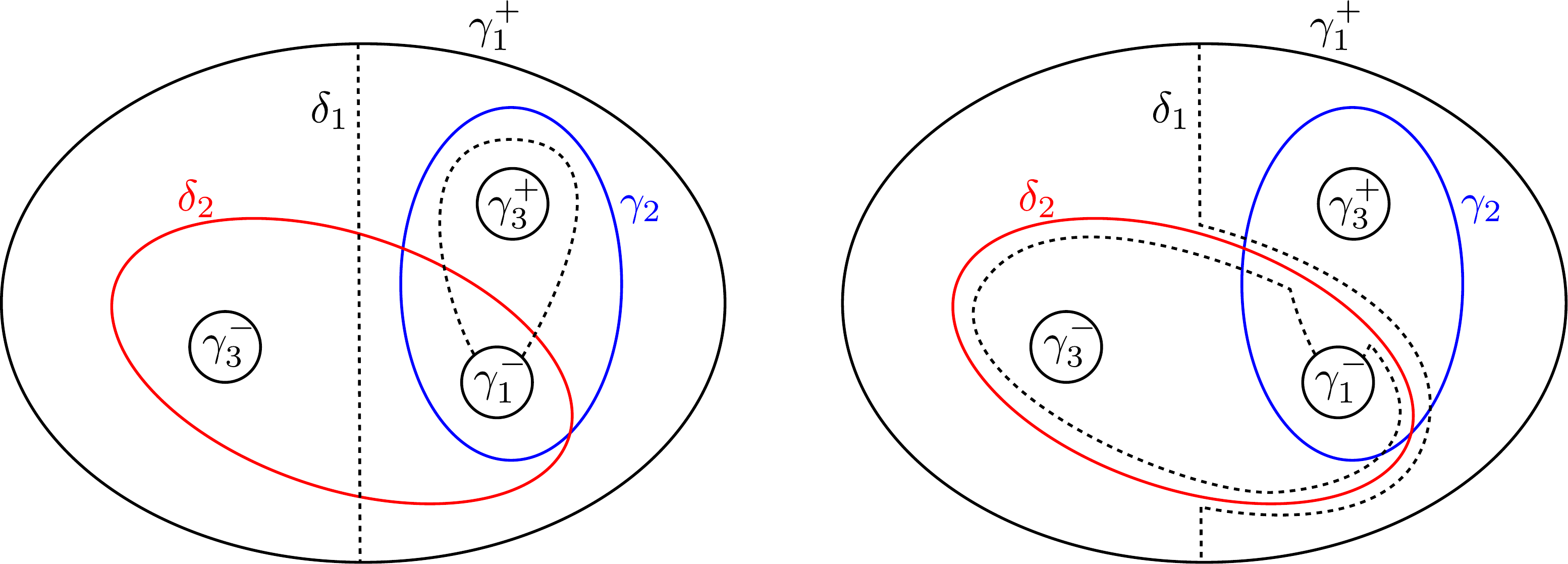}
  \caption{The cleanup move for dual curves in a switch.}
  \label{fig:cleanup-move}
\end{figure}
\begin{proof}
  Suppose that $X = \{\gamma_1, \gamma_2, \gamma_3\}$, and that
  $\Delta = \{\delta_1, \delta_2, \delta_3\}$, so that $\delta_i$ is
  dual to $\gamma_i$. Assume that we switch $\gamma_2$. Consider the
  surface $S$ (as in Section~\ref{sec:complexes}) obtained as the
  complement of the cut system $\{\gamma_1, \gamma_3\}$. Then both
  $\gamma_2, \delta_2$ are contained in $S$ and intersect in two points. 

  The dual curve $\delta_1$ defines two waves $w,w'$ with respect to
  $X$.  Consider $w$, and note that it intersects $\delta_2$ in two
  points (compare Figure~\ref{fig:cleanup-move}). There are two ways
  of surgering $w$ in the direction of $\delta_2$, i.e. replacing a
  subarc of $w$ by a subarc of $\delta_2$. Exactly one of them yields
  an essential wave, which we denote by $v$. Note that $v$ has the
  same endpoints as $w$. We define $v'$ similarly for the other wave
  $w'$.  We define $c(\delta_1)$ to be the curve $v \cup v'$. It
  intersects $\gamma_1$ in two points by construction, and is indeed
  nonseparating since it defines admissible waves (compare
  Lemma~\ref{lem:wave-description}).

  To see that the map $c$ commutes with Dehn twists about $\gamma_1$,
  it suffices to note that such a Dehn twist can be supported in a
  small neighbourhood of $\gamma_1$, and therefore the assignment of $v, v'$ 
  to $\delta_1$ commutes with Dehn twists by construction.
\end{proof}

\begin{definition}
  Suppose that $X \in \CP_2$ is a pants decomposition, that $\Delta$
  is a dual system, and $\gamma \in X$ is given. We then say that
  \emph{$(X', \Delta')$ is obtained from $(X, \Delta)$ by switching
    $\gamma$} if the following hold:
  \begin{enumerate}[i)]
  \item $X' = X\cup\{\delta\}\setminus\{\gamma\}$.
  \item The dual curve to $\delta$ is $\gamma$.
  \item The dual curves to both other $\delta' \in X'$ are obtained from
    the dual curves in $\Delta$ by the map $c$ from Lemma~\ref{lem:cleanup}.
  \end{enumerate}
\end{definition}
We record the following immediate corollary of the uniqueness
statement in Lemma~\ref{lem:cleanup}, which states that twisting about a curve different 
from $\gamma$ commutes with switching $\gamma$.
\begin{corollary}\label{cor:twist-commutes}
  Suppose that $X \in \CP_2$ is given, $\gamma, \gamma'$ are two curves in $X$. If
  $(X', \Delta')$ is obtained from $(X, \Delta)$ by switching $\gamma$, then
  $(X', T_{\gamma'}\Delta')$ is obtained from $(X, T_{\gamma'}\Delta)$ by switching $\gamma$.
\end{corollary}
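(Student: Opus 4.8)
The plan is to deduce the corollary from the \emph{naturality} of the switching construction under the mapping class group, together with the elementary observation that $T_{\gamma'}$ fixes every curve in sight because those curves are either equal to $\gamma'$ or disjoint from it (here I use that $\gamma'\neq\gamma$, as in the sentence preceding the corollary).

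First I would record the equivariance statement: if $\phi$ is any homeomorphism of $\partial V$ and $(X',\Delta')$ is obtained from $(X,\Delta)$ by switching $\gamma$, then $(\phi X',\phi\Delta')$ is obtained from $(\phi X,\phi\Delta)$ by switching $\phi\gamma$. This is essentially automatic from the definition: the swapped-in curve $\delta\in\Delta$ dual to $\gamma$ is characterised topologically by Lemma~\ref{lem:dualsystem}, the requirement ``the dual of $\delta$ in $X'$ is $\gamma$'' is preserved by $\phi$, and the cleanup map $c$ of Lemma~\ref{lem:cleanup} is built only from curves, waves, and surgeries in the isotopy classes of $\delta$ and of the relevant dual curve, so $\phi$ intertwines the cleanup maps for $(X,\Delta)$ and $(\phi X,\phi\Delta)$. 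I would state this and give the one-line justification just sketched.

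Then I would apply the equivariance to $\phi=T_{\gamma'}$. Write $X=\{\gamma,\gamma',\gamma_3\}$. Since $\gamma'$ lies in the pants decomposition $X$, it is disjoint from $\gamma$ and from $\gamma_3$; and $\delta$, being dual to $\gamma$, is disjoint from $\gamma'$ by Lemma~\ref{lem:dualsystem}(ii). Hence $T_{\gamma'}$ fixes the isotopy classes of $\gamma$, $\gamma_3$ and $\delta$, and therefore fixes both pants decompositions $X$ and $X'=X\cup\{\delta\}\setminus\{\gamma\}$ setwise. Substituting $\phi=T_{\gamma'}$, $\phi X=X$, $\phi X'=X'$, $\phi\gamma=\gamma$ into the equivariance statement yields exactly that $(X',T_{\gamma'}\Delta')$ is obtained from $(X,T_{\gamma'}\Delta)$ by switching $\gamma$, which is the assertion.

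I do not expect a genuine obstacle — the statement is flagged as immediate — so the only care needed is bookkeeping: checking that $T_{\gamma'}$ really fixes $\delta$ (not merely the original curves of $X$), and that the equivariance of $c$ is legitimately ``by construction'', i.e. that the two surgeries producing $c(\delta')$ in the proof of Lemma~\ref{lem:cleanup} depend only on the isotopy classes of $\delta'$ and $\delta$. If one prefers to avoid invoking equivariance, the same conclusion can be obtained by tracking the three dual curves directly: $T_{\gamma'}$ fixes the two curves of $\Delta$ not dual to $\gamma'$; it commutes with $c$ on the $\gamma'$-dual by Lemma~\ref{lem:cleanup}; and $c$ applied to the remaining dual curve depends only on that curve and on $\delta$, both fixed by $T_{\gamma'}$. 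Assembling these three observations gives $T_{\gamma'}\Delta'=\Delta''$, where $\Delta''$ is the dual system produced by switching $\gamma$ in $(X,T_{\gamma'}\Delta)$.
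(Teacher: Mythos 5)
Your proposal is correct and matches the paper's (unwritten) argument: the paper simply declares the corollary immediate from Lemma~\ref{lem:cleanup}, and your bookkeeping — $T_{\gamma'}$ fixes $\gamma$, $\gamma_3$, $\delta$ and $\delta_3$ by disjointness (Lemma~\ref{lem:dualsystem}~ii)), commutes with $c$ on the dual of $\gamma'$ by Lemma~\ref{lem:cleanup}, and fixes $c(\delta_3)$ since that curve is again disjoint from $\gamma'$ — is exactly the verification being suppressed. The equivariance framing is a harmless repackaging of the same facts, not a different route.
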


We can now define our geometric model for the handlebody group of genus $2$.
\begin{definition}
  The graph $\CPT_2$ has vertices corresponding to pairs $(X, \Delta)$, where 
  $X$ is a vertex of $\CP_2$, and $\Delta$ is a dual system to $X$. There are
  two types of edges:
  \begin{description}
  \item[Twist] Suppose that $X = \{\gamma_1, \gamma_2, \gamma_3\}$ is a vertex
    of $\CP_2$, and $\Delta$ is a dual system. Then we join, for any $i$
    \[ (X, \Delta) \quad\mbox{and}\quad (X, T_{\gamma_i}^{\pm1}\Delta) \]
    by edges $e_i^{\pm}$. We call these \emph{twist edges} and say that
    $\gamma_i$ is \emph{involved in $e_i^{\pm}$}.

    If $\gamma$ is a curve that is involved in two oriented twist
    edges $e, e'$ we say that it is \emph{involved with consistent
      orientation} if the corresponding Dehn twist has the same sign in
    both cases.
  \item[Switch] 
    Suppose that $X \in \CP_2$ is a vertex, and $\Delta$ is a dual system to $X$.
    Suppose further that $(X', \Delta')$ is obtained 
    from $(X, \Delta)$ by switching some $\gamma \in X$. 

    We then connect $(X, \Delta)$ and $(X', \Delta')$ by an edge
    $e$. We say that $e$ is a \emph{switch edge}, and that it
    \emph{corresponds to the edge between $X$ and $X'$ in $\CP_2$}.
  \end{description}
\end{definition}
\begin{proposition}\label{prop:model}
  The handlebody group $\Han_2$ acts on $\CPT_2$ properly
  discontinuously and cocompactly.  
\end{proposition}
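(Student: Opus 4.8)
The plan is to verify the two defining properties of a proper cocompact action separately, using the tree structure of $\CP_2$ established in Theorem~\ref{thm:CP-tree} together with the transitivity observation recorded after Lemma~\ref{lem:dualsystem}.

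\textbf{Cocompactness.} First I would exhibit a compact fundamental domain, or rather a finite set of vertices of $\CPT_2$ whose $\Han_2$--orbit is everything. Since the handlebody group acts transitively on pairs $(X,\Delta)$ with $X\in\CP_2$ and $\Delta$ a dual system, it already acts transitively on the vertex set of $\CPT_2$; hence a single vertex $v_0$ has full orbit, and cocompactness reduces to checking that $\CPT_2$ is locally finite (so that a bounded neighbourhood of $v_0$ is compact) — or, more directly, one argues that $\CPT_2$ is quasi-isometric to $\Han_2$ via the orbit map, which requires local finiteness plus connectedness. Connectedness of $\CPT_2$ follows from connectedness of $\CP_2$ (Lemma~\ref{lem:cp2-connected}): an edge path in $\CP_2$ lifts to a path of switch edges in $\CPT_2$ after using the cleanup move of Lemma~\ref{lem:cleanup} to track dual systems, and the twist edges connect all dual systems lying over a fixed $X$ because the $\beta_i$ are unique up to Dehn twist about $\delta_i$ (the "furthermore" clause of Lemma~\ref{lem:dualsystem}), so the stabilizer of $X$ in the dual-system fiber is exactly $\ZZ^3$ generated by the three twists, which is generated by the twist edges. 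Local finiteness is where one must do a little work: at a vertex $(X,\Delta)$ there are exactly six twist edges (two per curve of $X$), and the switch edges correspond bijectively to edges of $\CP_2$ incident to $X$ together with a choice of cleaned-up dual system, which is canonical; so the valence is six plus the $\CP_2$--valence at $X$. One must therefore check that $\CP_2$ is locally finite — equivalently, that for a fixed cut system $Z$ the tree $\CP_2(Z)\iso\W(Z)$ has finite valence; this follows from Corollary~\ref{cor:wave-graph} identifying $\W(Z)$ with a subgraph of the Farey graph of $\Sigma_{0,4}$, which is locally finite.

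\textbf{Proper discontinuity.} Here I would show that vertex stabilizers are finite. Fix a vertex $(X,\Delta)$ and let $\varphi\in\Han_2$ fix it. Then $\varphi$ fixes the pants decomposition $X$ setwise and the dual system $\Delta$ setwise. Using the change-of-coordinates principle, a mapping class fixing a pants decomposition of a genus $2$ surface together with a transverse dual system can only permute the pieces and act by finite-order twisting, so the stabilizer in $\Mcg(\Sigma_2)$ of the pair $(X,\Delta)$ is finite; intersecting with $\Han_2$ keeps it finite. More carefully: the pointwise stabilizer of the six curves $\delta_1,\delta_2,\delta_3,\beta_1,\beta_2,\beta_3$ in $\Mcg(\Sigma_2)$ is trivial (these curves fill $\Sigma_2$ — they cut it into disks — so by the Alexander method any mapping class fixing all of them and all complementary regions is the identity), and $\varphi$ lies in this stabilizer up to the finite ambiguity of permuting indices and of the hyperelliptic-type symmetries. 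Hence $\Stab_{\Han_2}(X,\Delta)$ is finite, and since the action on vertices is transitive all vertex stabilizers are finite and conjugate; combined with local finiteness of $\CPT_2$, this yields proper discontinuity.

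\textbf{Main obstacle.} The routine parts are transitivity (already recorded) and finiteness of vertex stabilizers (standard Alexander-method argument once one checks the six curves fill). The step that deserves the most care is the bookkeeping that makes the orbit map $\Han_2\to\CPT_2$ a quasi-isometry: one must verify that the switch and twist edges genuinely generate — i.e. that every element of $\Han_2$ is a product of switches and twists — which amounts to showing that $\CPT_2$ is connected and that the point stabilizers are finite, and then invoking the Milnor–Schwarz lemma. Connectivity is the delicate piece because one must ensure that the cleanup move of Lemma~\ref{lem:cleanup}, applied along a $\CP_2$--geodesic, produces a genuine edge path in $\CPT_2$ and that the endpoints' dual systems differ from the target $\Delta$ only by twists (so that the residual discrepancy is absorbed by twist edges); this uses Corollary~\ref{cor:twist-commutes} to commute the accumulated twists past the switches. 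Once connectivity and finiteness of stabilizers are in hand, Milnor–Schwarz gives the proposition.
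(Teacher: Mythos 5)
Your argument follows essentially the same route as the paper's: cocompactness from transitivity of $\Han_2$ on the vertices of $\CPT_2$ (transitivity on non\-separating meridional pants decompositions together with the twist group acting transitively on dual systems), and proper discontinuity from the fact that $X\cup\Delta$ cuts $\Sigma_2$ into simply connected pieces, so vertex stabilizers are finite by the Alexander method. The extra verifications you flag (local finiteness, connectedness of $\CPT_2$) are correct but are not part of the paper's short argument --- connectedness is established separately in a later proposition.
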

\begin{proof}
%
%
  The quotient of
  $\CPT_2$ by the handlebody group is finite, since $\Han_2$
  acts transitively on the vertices of $\CP_2$, and the group
  generated by Dehn twists about $X$ act transitively on dual systems
  of $X$. Since for a vertex $(X, \Delta)$ the union $X \cup \Delta$
  cuts the surface into simply connected regions, the stabilizer of
  any vertex of $\CPT_2$ is finite. 
\end{proof}

\subsection{Cubical Structure}
\label{sec:cubes}
In this section we will turn $\CPT_2$ into the $1$--skeleton of a $\mathrm{CAT}(0)$
cube complex. 

In order to do so, we will glue in two types of cubes into $\CPT_2$.
For the first, fix some $X \in \CP_2$, and consider the subgraph
$\CPT_2(X)$ spanned by those vertices whose corresponding pair has $X$
as its first entry. By definition, any edge in $\CPT_2(X)$ is a twist
edge, and in fact $\CPT_2(X)$ is isomorphic as a graph to the standard
Cayley graph of $\mathbb{Z}^3$. We call the subgraphs $\CPT_2(X)$
\emph{twist flats}. We then glue standard Euclidean cubes to make
$\CPT_2(X)$ the $1$-skeleton of the standard integral cube complex
structure of $\mathbb{R}^3$. We call these cubes \emph{twist cubes}.

The second kind of cubes will involve switch edges, and to describe them
we first need to understand all the switch edges corresponding
to a given edge between $X, X'$ in $\CP_2$. Let $\{\alpha_1, \alpha_2\} = X\cap X'$ be the two
curves that the two pants decompositions have in common, and suppose
$\gamma$ is switched to $\gamma'$. Then the possible switch edges will
join vertices
\[ \left( (\alpha_1, \alpha_2, \gamma), (\delta_1, \delta_2, \gamma')
\right) \quad\mbox{to}\quad \left( (\alpha_1, \alpha_2, \gamma'),
  (c(\delta_1), c(\delta_2), \gamma) \right) \] Note that since
$\delta_1, \delta_2$ are unique up to Dehn twists about $\alpha_1,
\alpha_2$ (Lemma~\ref{lem:dualsystem}), and the map $c$ commutes with twists
(Lemma~\ref{lem:cleanup}), we conclude that the switch edges corresponding to
the edge between $X, X'$ are exactly the edges between 
\[ \left( (\alpha_1, \alpha_2, \gamma), (T_{\alpha_1}^{n_1}\delta_1,
  T_{\alpha_2}^{n_2}\delta_2, \gamma') \right) \quad\mbox{and}\quad
\left( (\alpha_1, \alpha_2, \gamma'), (T_{\alpha_1}^{n_1}c(\delta_1),
  T_{\alpha_2}^{n_2}c(\delta_2), \gamma) \right) \]
for any $n_1, n_2$. Hence, in $\CPT_2$, there is a copy of the $1$--skeleton of a $3$--cube with vertices
\[ (X, \Delta), (X, T_{\alpha_1}\Delta), (X, T_{\alpha_2}\Delta), (X,
T_{\alpha_1}T_{\alpha_2}\Delta),\]\[ (X', \Delta'), (X',
T_{\alpha_1}\Delta'), (X', T_{\alpha_2}\Delta'), (X',
T_{\alpha_1}T_{\alpha_2}\Delta'), \]
 and we glue in a \emph{switch
  cube} at this $1$--skeleton. Similarly, we glue in three more switch
cubes for the different possibilities of replacing $T_{\alpha_1}$
and/or $T_{\alpha_2}$ by their inverses.

For later reference, observe that by construction any square in our cube complex
has either only twist edges, or exactly two nonadjacent switch edges in its boundary.
This fairly immediately implies the following.
\begin{proposition}\label{prop:link}
  The link of any vertex in the cube complex $\CPT_2$ is
  a flag simplicial complex.
\end{proposition}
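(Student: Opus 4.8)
The plan is to verify the standard combinatorial characterization of a flag simplicial complex: the link of any vertex $v = (X, \Delta)$ of $\CPT_2$ is a simplicial complex (no multiple edges, no self-loops among the vertices of the link), and any set of vertices in the link that is pairwise joined by edges spans a simplex. Concretely, vertices of the link correspond to edges of $\CPT_2$ incident to $v$, edges of the link correspond to squares of $\CPT_2$ containing $v$, and higher simplices correspond to higher-dimensional cubes. So the task splits into two parts: first, showing the link is simplicial, and second, showing flagness.

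First I would enumerate the edges incident to a fixed vertex $(X, \Delta)$ with $X = \{\gamma_1,\gamma_2,\gamma_3\}$. By construction there are exactly six twist edges, namely the $e_i^{\pm}$ for $i=1,2,3$, each moving to $(X, T_{\gamma_i}^{\pm 1}\Delta)$; and there are exactly three switch edges, one for each $\gamma_i \in X$ (the target pair $(X',\Delta')$ obtained by switching $\gamma_i$ is uniquely determined by Lemma~\ref{lem:cleanup}, since the dual curves of $X'$ are pinned down by the map $c$). Thus the link has nine vertices. To see the link is a simplicial complex one checks that these nine edges are genuinely distinct and that no two of them cobound two distinct squares: this is immediate from the observation recorded just before the proposition, namely that every square of $\CPT_2$ is either a twist square (all four edges twist edges, lying in a single twist flat $\CPT_2(X)\cong$ Cayley graph of $\ZZ^3$) or a switch square (exactly two nonadjacent switch edges, the other two being twist edges about the two common curves $\alpha_1,\alpha_2$ of $X\cap X'$). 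In either case a square is determined by the pair of opposite edges through $v$, so the link has no double edges.

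Next, for flagness, I would consider a collection of link vertices that are pairwise adjacent and show they span a cube. The twist edges among them must all involve distinct curves $\gamma_i$ (two oriented twist edges involving the same $\gamma_i$ with opposite signs are not adjacent in the link, since there is no square containing both $e_i^+$ and $e_i^-$ at $v$), and with consistent orientations; a set of $k\le 3$ such twist edges spans a $k$-cube inside the twist flat $\CPT_2(X) = \RR^3$ because that subcomplex is literally the standard cubulation of $\RR^3$, which is flag. A switch edge (say switching $\gamma_3$) is adjacent in the link precisely to the two twist edges about $\gamma_1$ and $\gamma_2$ (not the ones about $\gamma_3$); so a pairwise-adjacent family containing the switch edge for $\gamma_3$ together with twist edges can only also contain (consistently oriented) twist edges about $\gamma_1, \gamma_2$, and by the explicit description of switch cubes as $3$-cubes with vertices $(X, T_{\alpha_1}^{a}T_{\alpha_2}^{b}\Delta)$ and $(X', T_{\alpha_1}^{a}T_{\alpha_2}^{b}\Delta')$, any such consistent sub-collection bounds a face of that switch cube. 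Finally, one notes that two distinct switch edges at $v$ (switching $\gamma_i$ and $\gamma_j$, $i\ne j$) are never adjacent in the link, since a single square of $\CPT_2$ contains at most one switch edge through any given vertex. Assembling these cases shows every clique in the link bounds a cube, i.e. the link is flag.

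The main obstacle I expect is the bookkeeping in the switch-edge case: one has to be careful that, after switching $\gamma_3$ and then twisting about $\gamma_1$ (or $\gamma_2$), the resulting vertex of $\CPT_2$ is exactly the one obtained in the other order, and that the "cleaned up" dual system produced by $c$ is compatible with these twists. But this is precisely the content of Corollary~\ref{cor:twist-commutes} (twisting about a curve other than the switched one commutes with switching), so once that is invoked the argument is routine. The twist-flat case is essentially free, since the standard cubical structure on $\RR^n$ is flag by inspection.
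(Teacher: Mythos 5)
Your proposal is correct and follows essentially the same route as the paper: both arguments rest on the observation that every square of $\CPT_2$ has either all twist edges or exactly two nonadjacent switch edges, so every clique in the link is either all of twist type or contains exactly one switch-type vertex, and in each case the required cube (twist cube or switch cube) exists by construction. Your version merely adds more explicit bookkeeping (the nine link vertices, the exclusion of $4$-cliques), which the paper leaves implicit by noting the link is $2$-dimensional.
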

\begin{proof}
  Since the link is a $2$-dimensional simplicial complex, we only have
  to check that any boundary of a triangle in the $1$--skeleton of the
  link bounds a triangle in the link. Vertices in the link correspond
  to edges in $\CPT_2$ and are therefore of twist or switch
  type. Edges in the link are due to squares in the cubical structure,
  and by the remark above any edge has either both endpoints of twist
  type, or exactly one of switch type. Thus, there are only two types
  of triangles in the link: those were all three vertices are of twist
  type, and those where exactly one of the vertices is of switch type.
  But, in both of these cases, the three corresponding edges in
  $\CPT_2$ are part of a common twist or switch cube, and therefore
  the desired triangle in the link exists.
\end{proof}

\begin{proposition}
  The cube complex $\CPT_2$ is connected and simply-connected.
\end{proposition}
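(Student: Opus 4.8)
The plan is to verify connectivity and simple-connectivity of the cube complex $\CPT_2$ by relating it to the tree $\CP_2$ via the forgetful map $(X,\Delta) \mapsto X$, which sends $\CPT_2$ onto $\CP_2$ (it is surjective since every $X$ admits a dual system, and it sends twist edges to constant vertices and switch edges to edges of $\CP_2$). For connectivity, I would first note that the fibers over a vertex $X$ are the twist flats $\CPT_2(X) \cong \mathbb{Z}^3$, which are connected; and by Corollary~\ref{cor:twist-commutes} every switch edge over a fixed edge of $\CP_2$ can be reached from any chosen one by twisting. Since $\CP_2$ is connected (Lemma~\ref{lem:cp2-connected}), any two vertices of $\CPT_2$ can be joined by lifting a path in $\CP_2$: traverse a switch edge at each step, and adjust within the twist flat $\CPT_2(X_i)$ before each switch so that the correct dual system is in place. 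This gives connectivity.

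For simple-connectivity, I would use the standard criterion that a connected cube complex whose links are flag (Proposition~\ref{prop:link}) is simply connected if and only if the map on $\pi_1$ is generated by squares — equivalently, it suffices to show that every combinatorial loop in the $1$-skeleton is null-homotopic using the $2$-cells (twist squares and switch squares). Apply the forgetful projection $p:\CPT_2 \to \CP_2$. A loop $\gamma$ in $\CPT_2$ projects to a loop $p(\gamma)$ in the tree $\CP_2$; since a tree is simply connected, $p(\gamma)$ is null-homotopic, and in a tree this means $p(\gamma)$ backtracks — it can be reduced to a constant by successively cancelling a pair of adjacent inverse edges $X \to X' \to X$. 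I would lift this cancellation: a backtrack in $\CP_2$ corresponds in $\CPT_2$ to a subpath $v \to v' \to v''$ consisting of two switch edges, both corresponding to the \emph{same} edge of $\CP_2$ (between $X$ and $X'$), where $v, v''$ both lie over $X$. Using the explicit description of switch cubes (the $8$ vertices built from $T_{\alpha_1}^{n_1}T_{\alpha_2}^{n_2}$-translates), together with Corollary~\ref{cor:twist-commutes} and the fact that switching $\gamma$ twice returns the dual curve to its original position up to twists about $\alpha_1, \alpha_2$, one sees that $v \to v' \to v''$ is homotopic rel endpoints, through switch cubes and twist cubes, to a path lying entirely in the twist flat $\CPT_2(X)$. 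Repeating, the whole loop $\gamma$ is homotopic to a loop lying in a single twist flat $\CPT_2(X) \cong \mathbb{R}^3$, which is simply connected, so $\gamma$ bounds.

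The main obstacle I expect is making the lifting of a backtrack precise: one must check that after doing a switch at $\gamma$ and then switching back, the resulting dual system differs from the original only by twists about the two common curves $\alpha_1, \alpha_2$ of $X$ and $X'$ — i.e.\ that the composite of the cleanup map $c$ with its counterpart for the reverse switch is the identity up to such twists. This requires understanding how the map $c$ from Lemma~\ref{lem:cleanup} interacts with an inverse switch; I would argue this by tracking admissible waves in the four-holed sphere complements (as in Section~\ref{sec:waves}), using that the wave associated to a dual curve is determined uniquely up to twisting about the curve being completed, so that reversing the switch recovers the original wave data up to the $T_{\alpha_i}$-ambiguity. Once this local statement is established, the rest is the routine bookkeeping of concatenating homotopies along the edges of the path.
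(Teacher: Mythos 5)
Your overall strategy --- project to the tree $\CP_2$ via the forgetful map, locate a backtrack, and cancel it by a homotopy supported in twist flats and switch cubes --- is the same as the paper's, and your connectivity argument is identical. There is, however, a genuine gap in the key reduction step for simple-connectivity. You assert that a backtrack $X \to X' \to X$ in $\CP_2$ lifts to a subpath $v \to v' \to v''$ consisting of two \emph{adjacent} switch edges. This is false in general: the projection to $\CP_2$ collapses twist edges to constant vertices, so the two switch edges over a cancelling pair of edges of $\CP_2$ are in general separated by an arbitrary path $\tau$ of twist edges inside the twist flat $\CPT_2(X')$. Handling $\tau$ is precisely the substance of the paper's argument: one first observes that the net twisting of $\tau$ about the switched curve $\gamma'\in X'\setminus X$ must vanish (otherwise the second switch would not return to $X$), so $\tau$ can be homotoped inside $\CPT_2(X')\cong\RR^3$ to involve no twists about $\gamma'$ at all; one then pushes the remaining twist edges (about the two common curves of $X$ and $X'$) one at a time past the first switch edge using the squares of the switch cubes, shortening $\tau$ until the two switch edges become adjacent and cancel. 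As written, your induction never confronts $\tau$ and therefore does not get off the ground.

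The secondary issue you flag --- that switching and then switching back returns the dual system to its original position up to twists about the common curves, which the switch cubes absorb --- is indeed needed for the final cancellation; your plan to verify it by the uniqueness of admissible waves is reasonable, and the paper in fact leaves this point implicit. Once you insert the treatment of the intermediate twist path described above, the remainder of your argument (reducing the loop to one contained in a single contractible twist flat) goes through and coincides with the paper's proof.
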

\begin{proof}
  The fact that $\CPT_2$ is connected is an easy consequence of the
  fact that the tree $\CP_2$ is connected, and each twist flat
  $\CPT_2(X)$ is connected as well.

  Now, suppose that $g(i) = (X_i,
  \Delta_i)$ is a simplicial loop in $\CPT_2$. Then, the path $X_i$ is
  a loop in $\CP_2$, and since the latter is a tree, it has
  backtracking. Thus we can write $g$ as a concatenation
  \[ g = g_1 * \sigma_1 * \tau * \sigma_2 * g_2 \] where $\sigma_1,
  \sigma_2$ are two switch edges corresponding to an edge from some
  vertex $X$ to another vertex $X'$, and from $X'$ back to $X$,
  respectively, and $\tau$ is a path consisting only of twist
  edges. In fact, in order for $\sigma_1 * \tau * \sigma_2$ to be a
  path, the total twisting about the curve $\{\alpha\} = X'\setminus
  X$ has to be zero. Since the twist flat $\CPT_2(X')$ is homeomorphic
  to $\RR^3$ in our cubical structure, we may therefore homotope the
  path so that $\tau$ does not twist about $\alpha$ at all.

  Next, consider the first twist edge $t_1$ in $\tau$. Then,
  $\sigma_1*t_1$ are two sides of a square in a switch cube, and thus $g$ is
  homotopic to a path 
  \[ g = g_1 * t_1' * \sigma'_1 * \tau' * \sigma_2 * g_2 \] where now
  $\tau'$ has strictly smaller length than $\tau$, and $\sigma_1'$ and
  $\sigma_2$ still correspond to opposite orientations of the same
  edge in $\CP_2$.  By induction, we can reduce the length of $\tau'$
  to zero, in which case $g$ will have backtracking. An induction on
  the length of $g$ then finishes the proof.
\end{proof}
Hence, using Gromov's criterion (e.g. \cite[Chapter~II, Theorem~5.20]{BH})
we conclude:
\begin{corollary}\label{cor:cat0}
  $\CPT_2$ is a $\mathrm{CAT}(0)$ cube complex.
\end{corollary}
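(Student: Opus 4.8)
The plan is to apply Gromov's link condition, which for a cube complex whose links are flag (established in Proposition~\ref{prop:link}) and which is simply connected (established in the preceding proposition) reduces the question of being $\mathrm{CAT}(0)$ to checking that every vertex link is flag — so in fact the corollary is essentially a citation once those two inputs are in hand. Concretely, I would invoke \cite[Chapter~II, Theorem~5.20]{BH}: a cube complex is $\mathrm{CAT}(0)$ if and only if it is simply connected and the link of every vertex is a flag simplicial complex.

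\begin{proof}
  By Proposition~\ref{prop:link} the link of every vertex of $\CPT_2$ is a flag simplicial complex, and by the previous proposition $\CPT_2$ is connected and simply connected. By Gromov's link condition (see \cite[Chapter~II, Theorem~5.20]{BH}), a cube complex satisfying these two properties is $\mathrm{CAT}(0)$. Hence $\CPT_2$ is a $\mathrm{CAT}(0)$ cube complex.
\end{proof}

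The only mild subtlety worth flagging is bookkeeping rather than mathematics: one should make sure the cube complex $\CPT_2$ as constructed is finite dimensional (here all cubes are at most $3$--cubes, so this is immediate) and that ``simply connected'' in Gromov's criterion refers to the topological space underlying the cube complex, which is exactly what the previous proposition provides. There is no genuine obstacle in this step; all the real work has already been done in verifying simple connectivity (by reducing loops in $\CPT_2$ to backtracking via the tree structure of $\CP_2$ and the twist-flat structure) and in checking the flag condition on links (by the structural observation that every square has either all twist edges or exactly two nonadjacent switch edges). If anything, the one thing to double-check is that the statement of \cite[Chapter~II, Theorem~5.20]{BH} is being cited in the correct direction — we need the implication ``flag links $+$ simply connected $\Rightarrow$ $\mathrm{CAT}(0)$'', which is the substantive direction of that theorem.
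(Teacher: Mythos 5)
Your proof is correct and matches the paper exactly: the corollary is deduced by combining Proposition~\ref{prop:link} (flag links) and the simple connectivity proposition with Gromov's link condition as stated in \cite[Chapter~II, Theorem~5.20]{BH}. The extra remarks about finite dimensionality and the direction of the citation are sensible but not needed beyond what the paper itself records.
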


\begin{remark} By our construction, the genus $2$ handlebody group
  acts by semisimple isometries on a complete ${\rm CAT}(0)$-cube complex
  of dimension $3$. This should be contrasted to the main result of 
  \cite{B} which shows that any action of the mapping class group
  of a closed surface of genus $g\geq 2$ on a complete
  ${\rm CAT}(0)$-space
  of dimension less than $g$ fixes a point.
\end{remark}

\begin{corollary}\label{cor:biauto}
  The genus $2$ handlebody group $\Han_2$ is biautomatic, and has a quadratic
  isoperimetric inequality.
\end{corollary}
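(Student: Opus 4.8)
The plan is to derive Corollary~\ref{cor:biauto} as a formal consequence of Corollary~\ref{cor:cat0} (that $\CPT_2$ is a $\mathrm{CAT}(0)$ cube complex) together with Proposition~\ref{prop:model} (that $\Han_2$ acts on $\CPT_2$ properly discontinuously and cocompactly), by invoking known theorems about groups acting on $\mathrm{CAT}(0)$ cube complexes. First I would observe that by Proposition~\ref{prop:link} the complex $\CPT_2$ is nonpositively curved in the combinatorial sense, that its dimension is $3$ (all cubes are twist cubes or switch cubes, hence at most $3$-dimensional), and that the action of $\Han_2$ is by cubical automorphisms; properness and cocompactness are exactly the content of Proposition~\ref{prop:model}. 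From the remark immediately preceding, the action is also by semisimple isometries on a complete $\mathrm{CAT}(0)$ space. These are precisely the hypotheses under which a group acting properly cocompactly on a $\mathrm{CAT}(0)$ cube complex is known to be biautomatic; I would cite the relevant result (e.g. the work of \'Swi\c{a}tkowski, as already referenced in the introduction via Corollary~8.1 of \cite{Swiatkowski}, or the classical results of Niblo--Reeves) to conclude that $\Han_2$ is biautomatic.

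The second clause, the quadratic isoperimetric inequality, then follows from biautomaticity: it is a standard fact (going back to Epstein et al., or Gersten) that any biautomatic group -- indeed any automatic group -- satisfies a quadratic isoperimetric inequality, equivalently has at most quadratic Dehn function. Alternatively, and more directly, one can note that a group acting properly cocompactly on a $\mathrm{CAT}(0)$ space satisfies a quadratic isoperimetric inequality by the general $\mathrm{CAT}(0)$ filling estimate (Bridson--Haefliger), so this clause does not even require passing through biautomaticity. I would state both routes briefly but commit to deducing it from the biautomatic structure for uniformity with the corollary's phrasing.

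I do not expect a serious obstacle here: the real mathematical work -- constructing $\CPT_2$, proving it is $\mathrm{CAT}(0)$, and verifying the properness and cocompactness of the action -- has already been completed in the preceding sections. The only point requiring a modicum of care is to make sure the cited biautomaticity theorem applies verbatim: one must check that $\CPT_2$ is locally finite (which follows since vertex stabilizers are finite and the quotient is finite, so links are finite), finite-dimensional, and that the action is proper and cocompact by cubical automorphisms, all of which are in hand. Thus the proof is essentially a citation assembled from Corollary~\ref{cor:cat0}, Proposition~\ref{prop:model}, and the external biautomaticity result for $\mathrm{CAT}(0)$ cube complexes, with the isoperimetric statement appended as the standard corollary of biautomaticity.
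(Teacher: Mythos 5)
Your proposal is correct and matches the paper's proof essentially verbatim: the paper cites Corollary~8.1 of \cite{Swiatkowski} together with Proposition~\ref{prop:model} and Corollary~\ref{cor:cat0} for biautomaticity, and then deduces the (at most) quadratic isoperimetric inequality from biautomaticity. The only addition in the paper is the remark that the Dehn function is exactly quadratic (not sub-quadratic) because $\Han_2$ contains $\mathbb{Z}^2$ and hence is not hyperbolic, but this is not needed for the statement as phrased.
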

\begin{proof}
  From \cite[Corollary~8.1]{Swiatkowski} we conclude biautomaticity,
  since $\Han_2$ acts properly discontinuously and cocompactly on the
  $\mathrm{CAT}(0)$ cube complex $\CPT_2$ (Proposition~\ref{prop:model} and
  Corollary~\ref{cor:cat0}). This implies that $\Han_2$ has at most
  quadratic Dehn function \cite{BGSS, ET}.
  Observe that since $\Han_2$ contains copies of $\mathbb{Z}^2$
  (generated by Dehn twists about disjoint meridians) it is not
  hyperbolic, and therefore its Dehn function cannot be sub-quadratic
  \cite{Gromov}.
\end{proof}

Using Proposition 1 of \cite{CMV} we also conclude

\begin{corollary}\label{cor:haagerup}
  The genus $2$ handlebody group has the Haagerup property.
  \end{corollary}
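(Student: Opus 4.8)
The plan is to deduce the Haagerup property directly from the cubical model constructed in this section. By Proposition~\ref{prop:model} and Corollary~\ref{cor:cat0}, the group $\Han_2$ acts properly discontinuously and cocompactly on the ${\rm CAT}(0)$ cube complex $\CPT_2$; in particular, since $\CPT_2$ is locally finite and finite-dimensional (both clear from the construction in Section~\ref{sec:cubes}), this action is proper with respect to the combinatorial ($\ell^1$) metric on the $0$--skeleton.

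First I would recall that every ${\rm CAT}(0)$ cube complex is canonically a space with walls in the sense of \cite{CMV}: the walls are the hyperplanes of $\CPT_2$, and two vertices are separated by a hyperplane exactly when that hyperplane crosses the combinatorial geodesic between them, so that the associated wall pseudo-metric on the vertex set coincides with the $\ell^1$ combinatorial metric. The simplicial action of $\Han_2$ on $\CPT_2$ permutes the hyperplanes and therefore acts on this space with walls, and the metric properness from the previous paragraph is precisely properness of the action on the associated wall space.

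Then Proposition~1 of \cite{CMV}, which asserts that a group admitting a proper action on a space with walls has the Haagerup property, applies and yields the corollary. The only point requiring care -- and the closest thing to an obstacle -- is bookkeeping: one must check that the notion of ``proper action on a space with walls'' used in \cite{CMV} matches the metric properness furnished here (rather than a measured-walls formulation), and that local finiteness and finite-dimensionality of $\CPT_2$ ensure the wall metric is locally finite and proper. Once these conventions are aligned, the statement follows immediately.
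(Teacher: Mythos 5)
Your proof is correct and is essentially the paper's argument: the paper likewise deduces the Haagerup property by combining the proper cocompact action on the ${\rm CAT}(0)$ cube complex $\CPT_2$ (Proposition~\ref{prop:model} and Corollary~\ref{cor:cat0}) with Proposition~1 of \cite{CMV}, via the standard wall structure on a cube complex. Your extra bookkeeping about the wall pseudo-metric agreeing with the $\ell^1$ metric is a reasonable elaboration of the same one-line deduction.
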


\subsection{Other geometric consequences}
\label{sec:other-cons}
The geometric model $\CPT_2$ for the genus $2$ handlebody group can
also be used to conclude other facts about $\Han_2$. For example, we
have the following distance estimate in $\CPT_2$, which should be compared
to the Masur-Minsky distance formula for the surface mapping class group from \cite{MM2}.
\begin{proposition}\label{prop:distance-estimate}
  There are constants $c, C>0$ so that for all pairs of vertices
  $(X, \Delta_X), (Y, \Delta_Y) \in \CPT_2$ we have
  \[ d_{\CPT_2} ((X,\Delta_X), (Y,\Delta_Y)) \simeq_c d_{\CP_2}(X, Y) +
    \sum_\alpha [d_\alpha(X\cup \Delta_X, Y\cup \Delta_Y)]_C \]
  where the sum is taken over all non-separating meridians $\alpha$.
  Here, $\simeq_c$ means that the equality holds up to a (uniform)
  multiplicative and additive constant $c$, and $[\cdot]_C$ means that the
  term only appears of the argument is at least $C$.
\end{proposition}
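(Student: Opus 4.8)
The plan is to establish the distance estimate in the two standard directions: the lower bound (the right-hand side coarsely bounds the left) is the easy half, and the upper bound (each unit of $d_{\CPT_2}$ is accounted for by the $\CP_2$--distance and the annular projections) is the substance of the argument. For the lower bound I would check that both terms on the right are coarsely Lipschitz for the $\CPT_2$--metric. Moving along a single edge of $\CPT_2$ either changes $X$ by at most one step in $\CP_2$ (a switch edge) or leaves $X$ fixed (a twist edge), so $d_{\CP_2}(X,Y) \le d_{\CPT_2}((X,\Delta_X),(Y,\Delta_Y))$. For the annular terms, a twist edge about $\gamma_i$ changes $d_{\gamma_i}$ by a bounded amount and changes $d_\alpha$ for $\alpha\ne\gamma_i$ by at most a constant (since the rest of $X\cup\Delta$ is fixed and the twist is supported near $\gamma_i$), while a switch edge changes $X\cup\Delta$ in a bounded way near the edge in $\CP_2$, hence moves each $\pi_\alpha$ a bounded amount. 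Since along any geodesic in $\CPT_2$ only boundedly many ``new'' annuli $\alpha$ can have their projection grow at each step, the sum $\sum_\alpha [d_\alpha]_C$ is coarsely Lipschitz, giving the $\gtrsim$ direction.

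For the upper bound I would build an explicit path in $\CPT_2$ from $(X,\Delta_X)$ to $(Y,\Delta_Y)$ whose length is controlled by the right-hand side. The idea is to first realize a $\CP_2$--geodesic $X = X_0, X_1, \ldots, X_n = Y$ (so $n = d_{\CP_2}(X,Y)$), and to lift it to a path of switch edges in $\CPT_2$, carrying the dual system along via the canonical ``switch'' procedure of Lemma~\ref{lem:cleanup} and Corollary~\ref{cor:twist-commutes}. This produces a path of length $n$ landing at some $(Y, \Delta')$ where $\Delta'$ and $\Delta_Y$ differ only by powers of Dehn twists about the three curves of $Y$ (by the uniqueness in Lemma~\ref{lem:dualsystem}). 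The remaining discrepancy is then corrected inside the twist flat $\CPT_2(Y) \cong \mathbb{Z}^3$, at a cost equal to $\sum_{\gamma\in Y}|n_\gamma|$ where $T_\gamma^{n_\gamma}$ is the twisting discrepancy. The crux is to show that each such exponent $|n_\gamma|$ is coarsely bounded by $d_\gamma(X\cup\Delta_X, Y\cup\Delta_Y)$ together with a bounded error, and moreover that the total twisting we were forced to insert along the switch path (to keep consecutive switch edges compatible) is controlled by $\sum_\alpha [d_\alpha(X\cup\Delta_X,Y\cup\Delta_Y)]_C$. This is where Corollary~\ref{cor:twist-control} and Proposition~\ref{prop:annulus-projection-constant} enter: they say that $\pi_\alpha$ is coarsely constant along a $\CP_2$--geodesic except while the geodesic passes through the subtree $\CP_2(\alpha)$, so the only annuli that contribute to the right-hand sum are those curves appearing on (or adjacent to) the $\CP_2$--geodesic, and for each such $\alpha$ the change in $\pi_\alpha$ measures exactly the twisting one must perform while $\alpha \in X_i$.

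I expect the main obstacle to be the bookkeeping in the upper bound: making precise the claim that the canonical lift of a $\CP_2$--geodesic, with the cleanup move of Lemma~\ref{lem:cleanup} applied at each switch, introduces twisting about an intermediate curve $\alpha$ in an amount that is coarsely equal to $d_\alpha(X\cup\Delta_X, Y\cup\Delta_Y)$ up to a uniform constant --- neither more (which would break the upper bound) nor less (which would mean the path fails to reach $\Delta_Y$). Equivalently, one must show that the switch-cube structure lets one ``reshuffle'' the order in which twists and switches are performed (Corollary~\ref{cor:twist-commutes}) so that all the twisting about a fixed $\alpha$ can be concentrated in the interval $i_0 \le i \le i_1$ of Corollary~\ref{cor:twist-control} where $\alpha$ is a curve of $X_i$, and that the total such twisting is read off by the annular projection. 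Once this correspondence between ``twisting performed'' and ``annular projection distance'' is nailed down, both inequalities follow by combining the tree structure of $\CP_2$ (Theorem~\ref{thm:CP-tree}) with the flat structure of the twist flats and a standard argument that only finitely many $\alpha$ can contribute, exactly as in the Masur--Minsky distance formula.
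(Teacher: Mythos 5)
Your proposal is correct in substance but organizes the argument differently from the paper. You prove the two inequalities separately: the lower bound by showing the right-hand side is coarsely Lipschitz along edges of $\CPT_2$ (which works because adjacent vertices have uniformly bounded intersection number, hence uniformly bounded $d_\alpha$ for every $\alpha$, combined with the standard change-of-threshold argument), and the upper bound by lifting a $\CP_2$--geodesic via the canonical cleanup move of Lemma~\ref{lem:cleanup} and correcting the dual system at the end inside the twist flat $\CPT_2(Y)$. The paper instead takes an arbitrary $\CPT_2$--geodesic and \emph{normalizes} it using Corollary~\ref{cor:twist-commutes}: it shows the projection to the tree $\CP_2$ has no backtracking, that all twists about a fixed curve can be made adjacent and consistently oriented, and then reads off both bounds at once by counting switch edges (exactly $d_{\CP_2}(X,Y)$) and twist edges (coarsely $\sum_\alpha d_\alpha$, with total error $\sum_\alpha e_\alpha \leq 3\,d_{\CP_2}(X,Y)$ coming from the cleanup moves performed while $\alpha$ lies on the geodesic). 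Both routes rest on the same machinery --- Proposition~\ref{prop:annulus-projection-constant}, Corollary~\ref{cor:twist-control}, and the bounded effect of each cleanup move on annular projections --- so the choice is one of bookkeeping; your version is closer to the classical Masur--Minsky template, while the paper's version avoids having to construct a path by massaging a geodesic instead.

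Two small corrections to the ``crux'' you identify. First, no twisting needs to be inserted along the switch path to keep consecutive switch edges compatible: the lift of a $\CP_2$--geodesic by pure switches is always defined, so the only twisting in your path is the terminal correction in $\CPT_2(Y)$. Second, the discrepancy between $|n_\gamma|$ and $d_\gamma(X\cup\Delta_X, Y\cup\Delta_Y)$ is not a uniform constant but of order $i_1-i_0 \leq d_{\CP_2}(X,Y)$ (the length of the interval of Corollary~\ref{cor:twist-control} during which $\gamma$ sits in $X_i$ and its dual is repeatedly modified by cleanup moves). Since only the three curves of $Y$ carry a terminal correction, this error is absorbed into the multiplicative constant $c$, exactly as the paper absorbs $\sum_\alpha e_\alpha \leq 3\,d_{\CP_2}(X,Y)$.
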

\begin{proof}
  Consider a geodesic $g:[0,l] \to \CPT_2$ joining $(X, \Delta_X)$ to $(Y, \Delta_Y$)
  in $\CPT_2$. We need to estimate the length $l$ of $g$. First, we claim that
  the projection of $g$ to $\CP_2$ is a path without backtracking
  in the tree $\CP_2$ (but possibly with intervals on which it is
  constant).

  Namely, suppose that this is not the case. Then, as the projection $X_i$ of $g$ 
  backtracks, we can write $g$ as a concatenation
  \[ g = g_1 * \sigma * \tau * \sigma' * g_2 \] where $\sigma,
  \sigma'$ are two switch edges corresponding to opposite orientations of the same edge in $\CP_2$, and
  $\tau$ is a path just consisting of twist edges. If $\sigma$
  switches a curve $\gamma$, note that we may assume that $\tau$ does
  not twist about $\gamma$. Namely, the total twisting about $\gamma$
  has to be zero in order for $\sigma'$ to be able to follow
  $\tau$, and therefore any twists about $\gamma$ can be canceled
  without changing the length or the projection to $\CP_2$ of $g$.
  
  However, now the twists $\tau$ can be moved to the end of $g_1$ by
  Corollary~\ref{cor:twist-commutes} without changing the length of
  $g$ or its endpoints. However, after this modification $g$ has
  backtracking, which contradicts the fact that it is a geodesic.
  Similarly, arguing as above, we see that in a geodesic $g$ all the
  twist edges involving a given curve $\alpha$ need to have consistent
  orientation, as otherwise the geodesic could be shortened.

  Using Corollary~\ref{cor:twist-commutes} again, we may also assume that
  all twist edges involving the same curve $\alpha$ are adjacent in $g$,
  and appear immediately after $\alpha$ has become a curve in one of
  the $X_i$.

  \smallskip Now, the number of
  switch edges in $g$ is exactly $d_{\CP_2}(X, Y)$. It
  therefore suffices to argue that the number of twist edges can be
  estimated by the right-hand side of the equality in the proposition.

  Fix some non-separating meridian $\alpha$. If $\alpha$ never appears
  in $X_i$, then
  by Proposition~\ref{prop:annulus-projection-constant} the projection
  into the annulus around $\alpha$ is coarsely constant. Hence, by
  choosing $C$ large enough, these projections will not contribute to
  the sum in the statement of the proposition.

  If $\alpha$ does appear in $X_i$, then by
  Corollary~\ref{cor:twist-control}, it appears exactly for $i_0 \leq
  i \leq i_1$, and the projection before $i_0$ and after $i_1$ is
  coarsely constant. If $\alpha$ appears at $i_0$, and $g$ performs $n$ twists about $\alpha$ at this time, the
  projection $\pi_\alpha$ changes by a distance of $n$.  For any
  subsequent switch edge corresponding to $X_{i_0+1}, \ldots,
  X_{i_1}$, the projection can only change by a uniformly small amount
  in each cleanup move (given by Lemma~\ref{lem:cleanup}) as the
  corresponding dual curves intersect in uniformly few points. In
  conclusion, we have that $d_\alpha(X\cup \Delta_X, Y\cup \Delta_Y)$
  differs from $n$, where $n$ is the length of the twist segment in
  $g$ corresponding to $\alpha$, by at most the
  length $e_\alpha = i_1 - i_0$. However, observe that
  \[ \sum_{\alpha \in X_i, i=1,\ldots, k} e_\alpha \leq 3k \] since
  any vertex in the geodesic $(X_i)$ can be in at most three of the
  intervals whose lengths are counted as the $e_\alpha$. Hence, the
  sum of the error terms $e_\alpha$ is bounded by $d_{\CP_2}(X, Y)$,
  showing the proposition.
\end{proof}

From this distance formula, we immediately see the following: 
\begin{corollary}\label{cor:meridian-stabilisers}
  The stabilizer of a nonseparating meridian $\delta$ in $V_2$ is
  undistorted in $\Han_2$.
\end{corollary}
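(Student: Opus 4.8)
The plan is to realize $\Stab(\delta)$ as acting geometrically on an \emph{isometrically} embedded subcomplex of $\CPT_2$, and then to read off undistortion from two applications of the Milnor--Schwarz lemma. The subcomplex in question is the full subcomplex $\CPT_2(\delta)\subseteq\CPT_2$ spanned by the vertices $(X,\Delta)$ with $\delta\in X$. It is nonempty and $\Stab(\delta)$--invariant. I would first check it is connected: its image in $\CP_2$ is exactly the subtree $\CP_2(\delta)$ of non-separating meridional pants decompositions containing $\delta$ (a subtree, as observed in the proof of Corollary~\ref{cor:twist-control}); two vertices lying over the same $X\ni\delta$ are joined inside the connected twist flat $\CPT_2(X)\subseteq\CPT_2(\delta)$; and every edge of $\CP_2(\delta)$ is covered by switch edges both of whose endpoints again lie over $\CP_2(\delta)$. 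Concatenating twist segments with such switch edges connects any two vertices of $\CPT_2(\delta)$ inside $\CPT_2(\delta)$.

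The key point is that $\CPT_2(\delta)$ is isometrically embedded in $\CPT_2$. To see this, let $v,v'\in\CPT_2(\delta)$ and let $g$ be a $\CPT_2$--geodesic between them. Arguing exactly as in the proof of Proposition~\ref{prop:distance-estimate}, the projection of $g$ to $\CP_2$ is a path without backtracking, hence is the $\CP_2$--geodesic between the first coordinates of $v$ and $v'$; since both of these lie in the subtree $\CP_2(\delta)$, this geodesic is contained in $\CP_2(\delta)$. Consequently every pants decomposition occurring along $g$ contains $\delta$, so $g$ is a path in $\CPT_2(\delta)$, giving $d_{\CPT_2(\delta)}(v,v')\le \mathrm{length}(g)=d_{\CPT_2}(v,v')$; the reverse inequality is immediate.

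Next I would verify that $\Stab(\delta)$ acts on $\CPT_2(\delta)$ properly discontinuously -- as the restriction of the action of Proposition~\ref{prop:model} -- and cocompactly. Cocompactness is inherited from $\Han_2\curvearrowright\CPT_2$ using that $\Han_2$ acts transitively on non-separating meridians: if $F$ is a finite set of $\Han_2$--orbit representatives of vertices of $\CPT_2$ and $gf\in\CPT_2(\delta)$ with $f\in F$, then $g$ carries one of the at most three curves of the pants decomposition underlying $f$ onto $\delta$, which pins $g$ down up to left multiplication by an element of $\Stab(\delta)$; hence $\CPT_2(\delta)$ has only finitely many $\Stab(\delta)$--orbits of cells. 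Fixing a basepoint $v_0\in\CPT_2(\delta)$, the Milnor--Schwarz lemma now gives $\|h\|_{\Stab(\delta)}\asymp d_{\CPT_2(\delta)}(v_0,hv_0)$ for $h\in\Stab(\delta)$, and (using Proposition~\ref{prop:model} and Corollary~\ref{cor:cat0}) $\|g\|_{\Han_2}\asymp d_{\CPT_2}(v_0,gv_0)$ for $g\in\Han_2$. Combining these with the isometric embedding yields $\|h\|_{\Stab(\delta)}\asymp\|h\|_{\Han_2}$ for every $h\in\Stab(\delta)$, which is precisely the assertion that $\Stab(\delta)$ is undistorted in $\Han_2$.

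I expect the isometric embedding step to be the main obstacle: it is where one genuinely uses the tree structure of $\CP_2$ (Theorem~\ref{thm:CP-tree}) and, via the proof of Proposition~\ref{prop:distance-estimate}, the fact that $\CPT_2$--geodesics do not backtrack in $\CP_2$. By contrast, the connectedness and cocompactness verifications, while requiring some care, are routine.
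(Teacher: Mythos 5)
Your argument is correct and is essentially a careful unpacking of the paper's one-line deduction (the paper simply says the corollary follows "immediately" from Proposition~\ref{prop:distance-estimate}): you use the no-backtracking claim from the proof of that proposition, together with the fact that $\CP_2(\delta)$ is a subtree, to show $\CPT_2(\delta)$ is convex, and then apply Milnor--Schwarz to the cocompact $\Stab(\delta)$-action. This matches the intended route, with the added bonus that you get a genuinely isometric (not just quasi-isometric) embedding of the stabilizer's model space.
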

\begin{corollary}\label{cor:BBF}
  There is a quasi-isometric embedding of $\Han_2$ into a product
  of quasi-trees.
\end{corollary}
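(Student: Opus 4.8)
The plan is to invoke the Bestvina--Bromberg--Fujiwara projection-complex machinery, for which the distance formula in Proposition~\ref{prop:distance-estimate} together with the tree structure of $\CP_2$ (Theorem~\ref{thm:CP-tree}) supplies exactly the required input. First I would recall that $\CPT_2$ is quasi-isometric to $\Han_2$ by Proposition~\ref{prop:model}, so it suffices to embed $\CPT_2$ quasi-isometrically into a finite product of quasi-trees. The right-hand side of the distance formula is a sum of two pieces: the term $d_{\CP_2}(X,Y)$, and the sum of truncated annular subsurface distances $[d_\alpha(X\cup\Delta_X,Y\cup\Delta_Y)]_C$ over non-separating meridians $\alpha$. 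The first piece is already a distance in the tree $\CP_2$, hence trivially a quasi-tree factor. The bulk of the work is to realize the second piece as a coarse distance in a product of finitely many quasi-trees.

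The key step is to organize the annular projections into a projection complex in the sense of Bestvina--Bromberg--Fujiwara. I would take as the index set $\mathbf{Y}$ the collection of annuli around non-separating meridians $\alpha$, with the associated projections given by the annular subsurface projections $\pi_\alpha$. The projection axioms (bounded geodesic image and the Behrstock inequality, in the appropriate coarse form) need to be verified: the crucial ``bounded geodesic image'' statement is exactly the content of Proposition~\ref{prop:annulus-projection-constant} and Corollary~\ref{cor:twist-control}, which say that $\pi_\alpha$ is coarsely constant along a $\CP_2$--geodesic except on the (connected) subsegment where $\alpha$ actually appears as a pants curve. The Behrstock inequality for annular projections of meridians can be extracted from the corresponding statement for the full surface (Masur--Minsky, \cite{MM2}), since our projections are the restrictions of the surface ones. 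Once these axioms hold, the BBF construction produces a quasi-tree of metric spaces (indeed, since all the annular arc graphs are quasi-isometric to $\ZZ$, a genuine quasi-tree) into which the sum $\sum_\alpha [d_\alpha(\cdot,\cdot)]_C$ embeds; more precisely, BBF further provides a decomposition into finitely many sub-collections on each of which the projections pairwise coarsely-vanish, yielding finitely many quasi-trees whose product absorbs the whole sum.

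Assembling the pieces: the map $\Han_2 \simeq \CPT_2 \to \CP_2 \times \mathcal{Q}_1 \times \cdots \times \mathcal{Q}_N$, sending $(X,\Delta)$ to its image $X$ in $\CP_2$ together with its coordinates in the finitely many BBF quasi-trees $\mathcal{Q}_j$, is then a quasi-isometric embedding: the lower bound on distances is precisely the distance formula of Proposition~\ref{prop:distance-estimate}, and the upper (Lipschitz) bound follows because each coordinate map is coarsely Lipschitz (an edge of $\CPT_2$ changes each annular projection by a bounded amount, by the cleanup-move estimate in the proof of Proposition~\ref{prop:distance-estimate}). I expect the main obstacle to be the careful verification of the projection-complex axioms for the family of annular projections restricted to \emph{meridians} — in particular checking that the finiteness-of-multiplicity and Behrstock-type inequalities survive this restriction with uniform constants — rather than the final assembly, which is formal once the axioms are in place.
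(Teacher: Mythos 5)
Your proposal is correct in outline, but it takes a genuinely different (and considerably longer) route than the paper. The paper's proof is a two-line deduction: combining Proposition~\ref{prop:distance-estimate} with the Masur--Minsky distance formula for $\mathrm{Mcg}(\Sigma_2)$ shows that the map $\Han_2 \to \CP_2 \times \mathrm{Mcg}(\Sigma_2)$ is a quasi-isometric embedding (the sum over meridian annuli is coarsely dominated by the full Masur--Minsky sum over all subsurfaces); the first factor is a tree by Theorem~\ref{thm:CP-tree}, and the second factor embeds into a finite product of quasi-trees by the main theorem of \cite{BBF} applied to the mapping class group as a black box. You instead propose to run the Bestvina--Bromberg--Fujiwara projection-complex construction directly on the family of annuli around non-separating meridians, using Proposition~\ref{prop:annulus-projection-constant} and Corollary~\ref{cor:twist-control} as the bounded-geodesic-image input. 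This is precisely the strategy the authors describe in Remark~\ref{rem:actual-bbf} and explicitly decline to carry out ``since we do not need this precise result.'' Your route, if completed, buys a sharper conclusion --- the quasi-tree factors are built only from meridian annuli, so the embedding is intrinsic to $\Han_2$ rather than factoring through the ambient mapping class group --- but it requires a genuine verification of the projection axioms (Behrstock inequality and finite coloring with uniform constants) for the restricted family, which you correctly flag as the main obstacle but only sketch. The paper's route avoids all of that at the cost of a less precise statement. Neither approach is wrong; yours is the harder theorem, the paper's is the stated corollary.
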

\begin{proof}
  From Proposition~\ref{prop:distance-estimate}, and the Masur-Minsky distance
  formula we see that the map 
  \[ \mathcal{H}_2 \to \CP_2 \times \mathrm{Mcg}(\Sigma_2) \] is a
  quasi-isometric embedding. By Theorem~\ref{thm:CP-tree}, the
  factor $\CP_2$ is already a tree. By the main result of \cite{BBF}, we can embed the
  second factor isometrically into a product of quasi-trees.
\end{proof}
\begin{remark}\label{rem:actual-bbf}
  In fact, by arguing exactly like in \cite{BBF}, $\Han_2$ embeds into
  $\CP_2 \times Y_1\times\cdots\times Y_k$, where each $Y_k$ is a
  quasi-tree of metric spaces coming from applying the main
  construction of \cite{BBF} to the set of all non-separating
  meridians, and projections into annuli around them. Since we do not
  need this precise result, we skip the proof.
\end{remark}

\bibliographystyle{math}
\bibliography{dehn}

\bigskip
\noindent Ursula Hamenst\"adt\\
Rheinische Friedrich-Wilhelms Universit\"at Bonn, Mathematisches Institut\\
Endenicher Allee 60, 53115 Bonn\\
Email: ursula@math.uni-bonn.de

\smallskip
\noindent Sebastian Hensel\\
Mathematisches Institut der Universit\"at M\"unchen\\
Theresienstra\ss e 39, 80333 M\"unchen\\
Email: hensel@math.lmu.de

\end{document}